\documentclass[12pt,reqno]{amsart}
\usepackage{amssymb}
\usepackage{amsmath}
\usepackage{versions}
\usepackage[usenames,dvipsnames] {color}

\textwidth 5.8 in
\textheight 9 in
\setlength{\oddsidemargin}{0.25in}
\setlength{\evensidemargin}{0.25in}
\setlength{\topmargin}{ -0.5 in}

%%%%%%%%%%%%%%%%%%%%%%%%%%%%%%
%%%%%%%%%%%%%%%%%%%%%%%%%%%%%%%%%%%%%%%%%%%%%%%%%%%%%%%%%%%%%%%%%%%%%%%%
\def\gam{\Gamma}
\def\d{\mathbb{D}}
\def\c{\mathbb{C}}
\def\t{\mathbb{T}}

\def\ctwo{\mathbb{C}^2}

\def\royal{\mathcal{R}}
\def\rnk{\mathcal{R}^{n,k}}

\def\be{\begin{equation}}
\def\ee{\end{equation}}

\def\set#1#2{\{ #1 \, : \, #2\}}

\def\gaminn{\Gamma\text{-inner}}
\def\s0{s_0}
\def\p0{p_0}
\let\phi\varphi
\let\epsilon\varepsilon
\DeclareMathOperator{\ran}{ran}

%%%%%%%%%%%%%%%%%%%%%%%%%%%%%%%%%%%%%%%%%%%%%%%%%%%%%%%%%%%%%%%%%%%%%%%%

% THEOREM Environments ---------------------------------------------------
\newtheorem{theorem}{Theorem}[section]
\newtheorem{corollary}[theorem]{Corollary}
\newtheorem{lemma}[theorem]{Lemma}
\newtheorem{proposition}[theorem]{Proposition}

\numberwithin{equation}{section}
\newtheorem{defin}[theorem]{Definition}
\newtheorem{lem}[theorem]{Lemma}
\newtheorem{prop}[theorem]{Proposition}
\newtheorem{thm}[theorem]{Theorem}

\newtheorem{problem}[theorem]{Problem}

\newtheorem{example}[theorem]{Example}

\newtheorem{definition}[theorem]{Definition}

\newtheorem{remark}[theorem]{Remark}

\newtheorem{fact*}[theorem]{Fact}
%%%%%%%%%%%%%%%%%%%%%%%%%%%%%%%%%%%%
% MATH %%%%%%%%%
\DeclareMathOperator\hol{Hol}

\DeclareMathOperator\rank{rank}
\DeclareMathOperator\re{Re}

%\DeclareMathOperator{\schur}{schur}
%%%%%%%%%%%%%%%%%%%%%%%%%%%%%

\newcommand\e{\mathrm{e}}

\newcommand\ii{\mathrm{i}}

\newcommand{\M}{b\Gamma}

\newcommand{\T}{\mathbb{T}}

\newcommand{\D}{\mathbb{D}}
\newcommand{\C}{\mathbb{C}}
\newcommand{\G}{\mathcal{G}}

\newcommand{\R}{\mathbb{R}}

\newcommand{\schur}{\mathcal{S}}
\newcommand \z{\mathbb Z}

\newcommand{\ip}[2]{\left\langle #1, #2 \right\rangle}

\newcommand{\twopartdef}[4]
{
	\left\{
		\begin{array}{ll}
			#1 &  #2 \\  \\
			#3 &  #4
		\end{array}
	\right.
}

\newcommand{\ph}{\varphi}
\newcommand\al{\alpha}

\newcommand\Ga{\Gamma}

\newcommand\la{\lambda}
\newcommand\ups{\upsilon}
\newcommand\si{\sigma}
\newcommand\ta{\theta}

\newcommand\half{{\tfrac{1}{2}}}

\newcommand\df{\stackrel{\rm def}{=}}
\newcommand\Aut{\mathrm{Aut}~}

\newcommand\up{\upsilon}
\newcommand\beq{\begin{equation}}
\newcommand\ds{\displaystyle}
\newcommand\eeq{\end{equation}}

\newcommand\nn{\nonumber}
\newcommand\bbm{\begin{bmatrix}}
\newcommand\ebm{\end{bmatrix}}
\newcommand\bpm{\begin{pmatrix}}
\newcommand\epm{\end{pmatrix}}

\def\qed{\hfill $\square$ \vspace{3mm}}
\let\phi\varphi
\numberwithin{equation}{section}
%%%%%%%%%%%%%%%%%%%%%%%%%
%%%%%%%%%%%%%%%%%%%%%%%%%%
\begin{document}
\title[The construction of rational $\Gamma$-inner functions]{ Finite Blaschke products and  the construction of rational $\Gamma$-inner functions}
\author{Jim Agler, Zinaida A. Lykova and N. J. Young}
\date{submitted 13th April 2016, revised 30th October 2016}

\begin{abstract} 
Let 
\[
\Gamma \stackrel{\rm def}{=} \{(z+w, zw): |z|\leq 1, |w|\leq 1\} \subset \mathbb{C}^2.
\]
A {\em $\Gamma$-inner function} is a holomorphic map $h$ from the unit disc $\mathbb{D}$ to $\Gamma$ whose boundary values at almost all points of the unit circle $\mathbb{T}$ belong to the distinguished boundary $b\Gamma$ of $\Gamma$.
A rational $\Gamma$-inner function $h$ induces a continuous map $h|_\mathbb{T}$ from $\mathbb{T}$ to $b\Gamma$.  The latter set is topologically a M\"obius band and so has fundamental group $\mathbb{Z}$. The {\em degree} of $h$ is defined to be the topological degree of $h|_\mathbb{T}$.
In a previous paper the authors showed that if $h=(s,p)$ is a rational $\Gamma$-inner function of degree $n$ then $s^2-4p$ has exactly $n$ zeros in the closed unit disc $\mathbb{D}^-$, counted with an appropriate notion of multiplicity. In this paper,  with the aid of a solution of an  interpolation problem for finite Blaschke products, we explicitly construct the rational $\Gamma$-inner functions of degree $n$ with the $n$ zeros of $s^2-4p$ prescribed. 
\end{abstract}

%%%%%%%%%%%%%%
\subjclass[2010]{Primary  32F45, 30E05, 32A07,  Secondary 93B36,  93B50, 53C22}

%  30E05    	Moment problems, interpolation problems
%  32F45    	Invariant metrics and pseudodistances
%  93B36        $H^\infty$-control
%  93B50    	Synthesis problems
%  32A07	Special domains (Reinhardt, Hartogs, circular, tube)
%   53C22	Geodesics

\keywords{  Blaschke product, symmetrized bidisc, interpolation, Pick matrix, complex geodesic}
\thanks{The first author was partially supported by the National Science Foundation Grant on  Extending Hilbert Space Operators DMS 1361720. 
The second and third authors were partially supported by the UK Engineering and Physical Sciences Research Council (EPSRC) grant EP/N03242X/1.
The third author was partially supported by EPSRC grant EP/K50340X/1. The collaboration was partially supported by London Mathematical Society Grant 41431.}

\maketitle
%%%%%%%%%%%%%%%%%%%
\section{Introduction}\label{Nintro}
The {\em symmetrized bidisc} is the set
\[
\Ga \df \{(z+w, zw): |z|\leq 1, |w|\leq 1\} \subset \c^2.
\]
$\Ga$ has attracted considerable interest in recent years because of its rich function theory \cite{biswas,ALY12,KZ}, complex geometry \cite{costara,edigarian,jarnicki,NiPfZw,NiPa,PZ}, some associated operator theory \cite{AY,AY99,bhattacharyya,bhattacharyya2,pal,pal2,sarkar} and its connection with the difficult problem of $\mu$-synthesis \cite{Ber03,ALY13,NJY11}.
The distinguished boundary of $\Ga$, that is, the \v Silov boundary of the algebra of continuous functions on $\Ga$ that are holomorphic in the interior of $\Ga$, will be denoted by $b\Ga$.
Concretely, $b\Ga$ is the symmetrization of the $2$-torus \cite[Theorem 2.4]{AY04}:
\[
b\Ga=\{(z+w, zw): |z|=|w|= 1\}.
\]
A {\em $\Gamma$-inner function} is a holomorphic map $h$ from the unit disc $\d$ to $\Ga$
whose boundary values at almost all points of the unit circle $\t$ (with respect to Lebesgue measure) belong to $b\Ga$.  The
$\Ga$-inner functions constitute a natural analog of the {\em inner functions } of A. Beurling \cite{beurling}, which play a central role in the function theory of the unit disc.  
For example, it was known to Nevanlinna and Pick that an $n$-point interpolation problem for functions in the Schur class is solvable if and only if it is solvable by a rational inner function of degree at most $n$.  Likewise, every $n$-point interpolation problem for functions in the class $\hol(\d,\Ga)$ of holomorphic maps from $\d$ to $\Ga$, if solvable, has a rational $\Ga$-inner solution (for example, \cite[Theorem 4.2]{Cost05}).  Here, the {\em degree} of a rational $\Ga$-inner function $h$ is defined to be the topological degree of the restriction of $h$ mapping $\t$ continuously to $b\Ga$.  Since $b\Ga$ is homeomorphic to a M\"obius band, its fundamental group is $\z$, and so the degree of $h$ is an integer; it will be denoted by $\deg(h)$.

We shall address the analog for rational $\Ga$-inner functions of a problem about rational inner functions solved by W. Blaschke \cite{blaschke}.  The Argument Principle tells us that a rational inner function $\ph$ of degree $n$ has exactly $n$ zeros in $\d$, counted with multiplicity,
from which fact one deduces that $\ph$ is a finite Blaschke product
\[
\ph(\la)= c\prod_{j=1}^n\frac{\la-\al_j}{1-\bar\al_j\la}
\]
where $|c|=1$ and $\al_1,\dots,\al_n$ are the zeros of $\ph$.  In similar fashion, we should like to write down, as explicitly as possible, the general rational $\Ga$-inner function of degree $n$.   It was shown in \cite{ALY14} that if  $h=(s,p)$ is a rational $\Ga$-inner function of degree $n$ then $s^2-4p$ has exactly $n$ zeros in the closed unit disc $\d^-$, counted with an appropriate notion of multiplicity. The $n$ zeros of $s^2-4p$ can be regarded as analogs of the $\al_j$ for present purposes.

%Geometric interpretation
The variety 
\begin{align}\label{eq1.30_I}
\royal &\df \set{(2z,z^2)}{z\in\C} \notag\\
	&=\{(s,p)\in\c^2: s^2=4p\}
\end{align}
plays a special role in the function theory of $\Ga$: it is called the {\em royal variety}.
For a rational $\Ga$-inner function $h=(s,p)$, the zeros of $s^2-4p$ in $\d^-$ are the points $\la$ such that $h(\la)\in\royal$; we shall call them the
{\em royal nodes} of $h$.  If $\si\in\d^-$ is a royal node of $h$ then $h(\si)=(-2\eta,\eta^2)$ for some $\eta\in\d^-$; we call $\eta$ the {\em royal value} of $h$ corresponding to the royal node $\si$.
\begin{comment}  
The $\Ga$-inner functions $h$ such that $h(\d)\subset \royal$ are simply the functions $(2\ph,\ph^2)$ where $\ph$ is an inner function.  These $\Ga$-inner functions behave differently from the others, and we shall often specifically exclude them from consideration.
\end{comment}

 Let us formalise the problem of describing the general rational $\Ga$-inner function in terms of its royal nodes and values.  
\begin{problem}\label{prob1}
 Given distinct points $\si_1,\dots,\si_n$ in $\d^-$ and values $\eta_1,\dots,\eta_n$ in $\d^-$ find if possible a rational $\Ga$-inner function $h$ of degree $n$ such that
\begin{align*}
h(\sigma_j) &= (-2\eta_j,\eta_j^2) \quad \text{ for } j=1, \ldots, n.
\end{align*}
\end{problem}

The results of this paper show that there is a close connection between Problem \ref{prob1} and  an $n$-point interpolation problem for finite Blaschke products of degree $n$ in which there are interpolation nodes in both $\d$ and $\t$ and in which tangential information is specified at interpolation nodes in $\t$.  To formulate this problem we introduce some terminology.
\begin{defin}\label{blaschkedata}
Let $n \ge 1$ and $0 \le k \le n$.   By {\em Blaschke interpolation data} we mean a triple $(\si,\eta,\rho)$ where
\begin{enumerate}
\item  $\sigma = (\sigma_1, \sigma_2, \ldots, \sigma_n)$ is an $n$-tuple of distinct points such that  $\sigma_j \in \t$ for $j=1,\dots, k$ and $\sigma_j \in \d$ for $j=k+1,\dots, n$;
\item   $\eta = (\eta_1, \eta_2, \ldots, \eta_n)$ where $\eta_j \in \t$ for $j=1,\dots, k$ and $\eta_j \in \d$ for $j=k+1,\dots,  n$;
\item  $\rho = (\rho_1, \rho_2, \ldots, \rho_k)$ where $\rho_j > 0$ for $j=1,\dots,k$.
\end{enumerate}
\end{defin}
For such data the {\em  Blaschke interpolation problem} with data $(\si,\eta,\rho)$ is the following:
\begin{problem}\label{prob}
Find if possible a rational inner function $\phi$ on $\d$ (that is,  a finite Blaschke product) of degree $n$ with the properties
\begin{align}
\phi(\sigma_j) &= \eta_j \quad \text{ for } j=1, \ldots, n \label{eq6.2}
\end{align}
and
\begin{align}
A\phi(\sigma_j)&=\rho_j \quad \text{ for } j=1, \ldots, k,  \label{eq6.4}
\end{align}
where $A\ph(\e^{i\theta})$ denotes the rate of change of the argument of $\ph(\e^{i\theta})$ with respect to $\theta$.
\end{problem}

Problem \ref{prob} has been much studied, for example \cite{RR,Sar,BH, jr87,gl2002,sw2006,gr2008}.  Without the tangential conditions
\eqref{eq6.4}, or some other constraint (for example, a degree constraint), the problem would arguably be ill-posed: solvability would depend only on the interpolation conditions at nodes in $\d$, and the conditions at $\si_1,\ldots,\si_k$ would be irrelevant.  With the conditions
\eqref{eq6.4}, however, the problem has an elegant solution.
There is a simple criterion for the existence of a solution of Problem \ref{prob} in terms of an associated ``Pick matrix'', and better still, there is an explicit parametrization of all solutions $\ph$ by a linear fractional expression in terms of a parameter $\zeta\in\t$.  There are polynomials $a,b,c$ and $d$ of degree at most $n$ such that the general solution of Problem \ref{prob} is
\be\label{lnfrzeta}
\ph=\frac{a\zeta+b}{c\zeta+d}
\ee
where the parameter $\zeta$ ranges over a cofinite subset of $\t$
(see Theorem \ref{refSarason} below).  The polynomials $a,b,c$ and $d$ are unique subject to a certain normalization.

Analogously, Problem \ref{prob1} needs to be modified by the addition of tangential conditions at interpolation nodes in $\t$ in order to be well posed.  We are led to the following refinement of Problem \ref{prob1}. 
\begin{problem}\label{royalinterp}
 Given Blaschke interpolation data $(\si,\eta,\rho)$ with $n$ interpolation nodes of which $k$ lie in $\t$, find if possible a rational $\Ga$-inner function $h=(s,p)$ of degree $n$ such that
\begin{align*}
h(\si_j)&=(-2\eta_j,\eta_j^2) \quad \mbox{ for }j=1,\dots,n
\end{align*}
and
\[
Ap(\si_j)=2\rho_j \quad \mbox{ for } j=1,\dots,k.
\]
\end{problem}
We shall call this the {\em royal $\Ga$-interpolation problem} with data $(\si,\eta,\rho)$.

The connection between Problems \ref{royalinterp} and \ref{prob} can be described with the aid of a certain $1$-parameter family of rational functions $\Phi_\omega$ on $\Ga$, where $\omega\in\t$. These functions play a central role in the function theory of $\Ga$ (for example, \cite{AY,AY04}).  They are defined by 
\be\label{eq1.26_I}
\Phi_\omega(s,p) = \frac{2\omega p - s}{2 - \omega s}.
\ee
$\Phi_\omega$ is holomorphic on $\Ga$, except for a singularity at $(2\bar\omega,\bar\omega^2)$, and maps $\Ga$ into $\d^-$.  They constitute a universal set of Carath\'eodory extremal functions for the interior of $\Ga$ \cite[Corollary 3.4]{AY04}. 

A consequence of Theorems \ref{thm7.10} and \ref{thm7.20} is:
\begin{theorem}
For Blaschke interpolation data $(\si, \eta,\rho)$  the following two statements are equivalent
\begin{enumerate}
\item Problem {\rm \ref{royalinterp}} with data $(\si, \eta,\rho)$ is solvable by a rational $\Ga$-inner function $h$ such that $h(\d) \not\subset \royal$;
\item Problem {\rm \ref{prob}} with data $(\si, \eta,\rho)$  is solvable
and there exist $s_0,p_0\in\c$ such that
\begin{align*}
|s_0|&<2, \quad |p_0|=1,\\
s_0&=\bar s_0p_0,
\end{align*}
\[
s_0a-2b+2p_0c-s_0d=0,
\]
\begin{comment}
and 
\be\label{strange}
(2p_0c- s_0d)^2 \neq (-2p_0a+s_0b)(s_0c -2d)
\ee
\end{comment}
where $a,b,c$ and $d$ are the polynomials in the normalized parametrization \eqref{lnfrzeta} of the solutions of Problem {\rm \ref{prob}}.
\end{enumerate}
\end{theorem}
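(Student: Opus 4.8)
The plan is to deduce the equivalence by assembling Theorems~\ref{thm7.10} and~\ref{thm7.20}: the first carries a rational $\gaminn$ solution of Problem~\ref{royalinterp} (not contained in $\royal$) to a solution of Problem~\ref{prob} together with the extra datum $(\s0,\p0)$, and the second runs the construction in reverse. The conceptual link is the family $\Phi_\omega$ of \eqref{eq1.26_I}, and two preliminary observations underpin the argument: (i) $\Phi_\omega(-2\eta,\eta^2)=\eta$ for every $\omega\in\t$ and every $\eta$ with $2-\omega s\neq 0$; and (ii) the conditions $|\p0|=1$, $\s0=\bar\s0\p0$, $|\s0|<2$ say precisely that $(\s0,\p0)\in\M\setminus\royal$, since they force the two roots of $X^2-\s0 X+\p0$ to be distinct and to lie on $\t$.

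For $(1)\Rightarrow(2)$ I would begin with a rational $\gaminn$ function $h=(s,p)$ of degree $n$ that solves Problem~\ref{royalinterp} with $h(\d)\not\subset\royal$, and set $\varphi\df\Phi_{\omega_0}\circ h$ for a value $\omega_0\in\t$ chosen outside the finite exceptional set for which $\Phi_{\omega_0}$ has a pole on $h(\d^-)$ or the composite degree drops below $n$; then $\varphi$ is a finite Blaschke product of degree $n$. By (i), $\varphi(\si_j)=\Phi_{\omega_0}(-2\eta_j,\eta_j^2)=\eta_j$, which is \eqref{eq6.2}. Since the $n$ royal nodes are distinct with exactly $k$ of them on $\t$, $s^2-4p$ vanishes to order $2$ at each $\si_j$ with $j\le k$ (in the normalization of \cite{ALY14} a royal node on $\t$ of multiplicity one is a zero of $s^2-4p$ of order two); a short boundary computation at such a double zero, using \eqref{eq1.26_I}, yields $A\varphi(\si_j)=\tfrac12 Ap(\si_j)=\rho_j$, which is \eqref{eq6.4}. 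Hence Problem~\ref{prob} is solvable and $\varphi=(a\zeta_0+b)/(c\zeta_0+d)$ for some admissible $\zeta_0\in\t$ and the normalized polynomials $a,b,c,d$. Finally, a single function $\Phi_{\omega_0}\circ h$ does not recover $h$, so one scalar invariant more is needed; Theorem~\ref{thm7.10} identifies it as a point $(\s0,\p0)\in\M\setminus\royal$, and substituting the relations $\varphi=\Phi_{\omega_0}\circ h$, $\varphi=(a\zeta_0+b)/(c\zeta_0+d)$ and the relation that $h$ maps into $\M$, then clearing denominators, produces the polynomial identity $\s0 a-2b+2\p0 c-\s0 d=0$.

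For $(2)\Rightarrow(1)$, Theorem~\ref{thm7.20} supplies an explicit $h=(s,p)$: with $a,b,c,d$ the normalized polynomials of \eqref{lnfrzeta} and $(\s0,\p0)$ as in~(2), one writes $s$ and $p$ as prescribed rational functions of $\la$ built from $a,b,c,d,\s0,\p0$. The polynomial identity $\s0 a-2b+2\p0 c-\s0 d=0$ is exactly the certificate that $h$ maps $\d$ into $\Ga$ and has boundary values in $\M$, i.e.\ that $h$ is genuinely $\gaminn$ and not merely a rational map into $\c^2$; one then checks, once more via (i) and the boundary computation above, that $\deg h=n$, that $h(\si_j)=(-2\eta_j,\eta_j^2)$, and that $Ap(\si_j)=2\rho_j$, so that $h$ solves Problem~\ref{royalinterp}. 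Finally $h(\d)\not\subset\royal$, because a rational $\gaminn$ function with image in $\royal$ is of the form $(2\psi,\psi^2)$ with $\psi$ inner and therefore has even degree $2\deg\psi$, whereas the construction delivers the prescribed degree $n$ together with the departure from $\royal$ encoded by $(\s0,\p0)\notin\royal$.

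The main obstacle is the backward construction: recovering the pair $(s,p)$ from the single Blaschke product $\varphi$ — which determines $h$ only once the extra datum is supplied — and then certifying that the resulting rational map is $\gaminn$ of degree exactly $n$. It is here that the identity $\s0 a-2b+2\p0 c-\s0 d=0$ does the essential work: it is precisely the condition that rules out the degeneracies (a drop in degree, spurious royal nodes, or $h$ escaping $\Ga$) which would otherwise spoil the construction. Managing these points, together with the bookkeeping for possible coincidences among the $\si_j$ and for the finitely many inadmissible values of $\omega_0$ and $\zeta$, is exactly what Theorems~\ref{thm7.10} and~\ref{thm7.20} are designed to handle, and the displayed statement is obtained by specializing and combining them.
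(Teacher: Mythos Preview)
Your approach is exactly the paper's: the displayed theorem is stated there as a consequence of Theorems~\ref{thm7.10} and~\ref{thm7.20}, and you have correctly identified how each direction is handled (including the use of Proposition~\ref{phaser_h} for the phasar derivative computation $A\varphi(\si_j)=\tfrac12 Ap(\si_j)$).

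One correction in the backward direction: your argument that $h(\d)\not\subset\royal$ because a $\Ga$-inner function with range in $\royal$ has even degree is insufficient, since $n$ may well be even. The paper's argument (and the one you allude to at the end of the same sentence) is the right one: the normalization $a(\tau)=d(\tau)=1$, $b(\tau)=c(\tau)=0$ forces $s(\tau)=\s0$ and $p(\tau)=\p0$ in the explicit formulas \eqref{eq7.110_2}--\eqref{eq7.130_2}, and since $|\s0|<2$ while $|\p0|=1$ we have $s(\tau)^2\neq 4p(\tau)$, so $h(\tau)\notin\royal$. You should make this explicit rather than rely on the degree-parity remark.
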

%The inequation \eqref{strange} is needed to exclude the case that $h(\d)\subset\royal$.  
The $\Ga$-inner functions whose range is contained in $\royal$, those of the form $(2f,f^2)$ for some inner $f$, behave differently from others.

Theorem \ref{thm7.20} gives a formula for a solution $h$ of Problem \ref{royalinterp} in terms of $s_0,p_0,a,b,c$ and $d$.  Since the polynomials $a,b,c$ and $d$ are computed in Theorem \ref{thm6.10} and Remark \ref{explicit-solution}, we obtain an explicit solution of 
Problem \ref{royalinterp}. The algorithm is presented in Section \ref{algorithm}.

The connection between the solution sets of the royal $\Ga$-interpolation problem and the Blaschke
interpolation problem can be made explicit with the aid of the functions $\Phi_\omega$.
\begin{theorem}\label{thm1.1}
Let $(\si, \eta,\rho)$ be Blaschke interpolation data.  Suppose that $h$ is a solution of Problem {\rm \ref{royalinterp}} with these data and that $h(\d) \not\subset \royal$.
For all
$\omega\in\t\setminus\{-\bar\eta_1,-\bar\eta_2,\ldots,-\bar\eta_k\}$, the function $\Phi_\omega\circ h$ is a solution of Problem {\rm \ref{prob}}  with the same data.  
Conversely, for every solution $\ph$ of the Blaschke interpolation problem with data $(\si, \eta,\rho)$, there exists $\omega\in\t$ such that $\ph=\Phi_\omega\circ h$.
\end{theorem}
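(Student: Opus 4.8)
Write $g_\omega=\Phi_\omega\circ h$. The argument will rest on two facts: each $\Phi_\omega$ maps $b\Ga$ into $\t$ and is holomorphic on $\Ga$ except for a singularity at $(2\bar\omega,\bar\omega^2)\in\royal\cap b\Ga$; and the elementary identity
\[
\Phi_\omega(s,p)^2=p+\frac{(s^2-4p)(1-\omega^2p)}{(2-\omega s)^2},
\]
obtained by clearing denominators. Since $h$ solves Problem \ref{royalinterp} and has degree $n$, by \cite{ALY14} the function $s^2-4p$ has exactly $n$ zeros in $\d^-$; these are precisely the distinct points $\si_1,\dots,\si_n$, with royal values $-\eta_1,\dots,-\eta_n$. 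Hence for $\omega\in\t\setminus\{-\bar\eta_1,\dots,-\bar\eta_k\}$ the image $h(\d^-)$ avoids $(2\bar\omega,\bar\omega^2)$, so $g_\omega$ is rational, holomorphic on $\d$, continuous on $\d^-$, and of modulus $1$ on $\t$ (because $h(\t)\subset b\Ga$); thus $g_\omega$ is a finite Blaschke product. Its degree is $n$: $h|_\t$ is a loop avoiding the singularity of $\Phi_\omega$ and representing $n$ times the core of the M\"obius band $b\Ga$, while $\Phi_\omega$ carries the core circle $\{(0,\tau):|\tau|=1\}$ once around $\t$, so $\deg g_\omega=\deg h=n$. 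Finally $g_\omega(\si_j)=\Phi_\omega(-2\eta_j,\eta_j^2)=\eta_j$ (valid because $1+\omega\eta_j\ne0$), which is \eqref{eq6.2}.

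For the conditions \eqref{eq6.4} I would use the displayed identity. At a node $\si_j\in\t$ the function $s^2-4p$ vanishes to even order, hence to order $\ge2$: on $\t$ one has $s=\bar sp$ and $|p|=1$, so $(s^2-4p)/p$ is real there, and writing $h=(e^{ia}+e^{ib},e^{i(a+b)})$ one computes $(s^2-4p)/p=-4\sin^2\tfrac{a-b}{2}\le0$, so its zero at $\si_j$ is a local maximum of a real-analytic function and therefore of even order. Since $2-\omega s\ne0$ near $\si_j$, the identity shows $g_\omega^2-p$ equals $s^2-4p$ times a function holomorphic near $\si_j$, so $g_\omega^2-p$ vanishes to order $\ge2$ at $\si_j$; hence $g_\omega^2(\si_j)=p(\si_j)=\eta_j^2\ne0$ and $(g_\omega^2)'(\si_j)=p'(\si_j)$. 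Consequently $A(g_\omega^2)(\si_j)=\si_j(g_\omega^2)'(\si_j)/g_\omega^2(\si_j)=Ap(\si_j)=2\rho_j$, and from $A(g_\omega^2)=2Ag_\omega$ we get $Ag_\omega(\si_j)=\rho_j$. Thus $g_\omega$ solves Problem \ref{prob} with the given data.

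For the converse, let $\ph=\dfrac{a\zeta+b}{c\zeta+d}$, $\zeta\in\t$, be the normalized parametrization \eqref{lnfrzeta} (Theorem \ref{refSarason}) of all solutions of Problem \ref{prob}. Solving $\Phi_\omega\circ h=\dfrac{a\zeta+b}{c\zeta+d}$ for $\zeta$ and using that $\Phi_\omega(s,p)=\dfrac{2p\,\omega-s}{2-s\,\omega}$ is fractional linear in $\omega$, one expresses $\zeta$ as a ratio of two affine functions of $\omega$ with coefficients that a priori depend on $\la$; but by the part already proved this ratio equals, for each admissible $\omega$, the $\la$-independent constant $\zeta\in\t$ of the parametrization, so the $\la$-dependence cancels and $\omega\mapsto\zeta$ is a M\"obius transformation $M$. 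The hypothesis $h(\d)\not\subset\royal$ makes $M$ non-degenerate: from $\Phi_\omega\circ h\equiv\Phi_{\omega'}\circ h$ one computes $(\omega-\omega')(4p-s^2)\equiv0$, so distinct admissible $\omega$ produce distinct solutions. Hence $M$ is a bijection of $\hat\c$ carrying $\t$ onto $\t$, and every solution $\ph=\frac{a\zeta+b}{c\zeta+d}$ equals $\Phi_{M\inv(\zeta)}\circ h$; for the finitely many $\zeta$ corresponding to $\omega\in\{-\bar\eta_1,\dots,-\bar\eta_k\}$ one checks from the identity above that the singularity of $\Phi_\omega$ at $h(\si_j)$ is cancelled by the order-$2$ zero of $s^2-4p$ at $\si_j$, so $\Phi_\omega\circ h$ still extends to a rational inner solution there. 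This proves the theorem.

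I expect the main obstacle to be the boundary analysis of the second paragraph: pinning down the order of vanishing of $s^2-4p$ at the circle nodes and deducing that $\Phi_\omega\circ h$ inherits the argument derivative of $p$, so that the tangential data transfer correctly; together with the verification in the converse that the $\la$-dependence genuinely drops out and that the exceptional parameters are attained. The remaining assertions are routine computations.
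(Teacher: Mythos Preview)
Your forward direction is essentially correct and closely parallels the paper's argument, though packaged differently. The paper establishes the phasar-derivative condition via Proposition~\ref{phaser_h}(iv), which directly computes $A(\Phi_\omega\circ h)(\si_j)=\half Ap(\si_j)$; your route through the identity $\Phi_\omega(s,p)^2=p+(s^2-4p)(1-\omega^2p)/(2-\omega s)^2$ and the observation that $(s^2-4p)/p\le 0$ on $\t$ is a pleasant alternative, and is in fact the same idea as the paper's proof of Proposition~\ref{phaser_h}(ii), which uses $\bar p(4p-s^2)=4-|s|^2\ge 0$. Your topological degree argument is not rigorous as stated: removing a point from the M\"obius band $b\Ga$ changes its fundamental group to a free group on two generators, so ``$h|_\t$ represents $n$ times the core'' does not immediately determine $\deg(\Phi_\omega\circ h|_\t)$. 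The paper instead counts cancellations between $2\omega p-s$ and $2-\omega s$ (equation~\eqref{degrees}), citing \cite[Theorem~7.12]{ALY12} to show there are none when $\omega\ne-\bar\eta_j$.

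Your converse contains a genuine error in the last sentence. For $\omega=-\bar\eta_j$ the rational function $\Phi_\omega\circ h$ does extend across the removable singularity at $\si_j$, but after cancellation its degree is $n-1$, not $n$ (see the Remark following Theorem~\ref{thm7.10}); hence it is \emph{not} a solution of Problem~\ref{prob}. The correct way to close the argument is not to claim these exceptional $\omega$ give solutions, but to observe that the corresponding $\zeta=M(\omega)$ cannot arise from any solution: since $\Phi_\omega\circ h$ and $(a\zeta+b)/(c\zeta+d)$ agree as rational functions of $(\omega,\la)$, the degree drop at $\omega=-\bar\eta_j$ forces $a\zeta+b$ and $c\zeta+d$ to share a common factor at $\zeta=M(-\bar\eta_j)$, so $\ph_\zeta$ has degree $<n$ there. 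Thus every genuine solution $\ph$ has $\ph(\tau)\notin M(\{-\bar\eta_1,\dots,-\bar\eta_k\})$, and your M\"obius-bijection argument then yields $\ph=\Phi_{M\inv(\ph(\tau))}\circ h$. The paper does this more directly by evaluating at the normalization point $\tau$, obtaining the explicit map $\zeta=(2\omega p_0-s_0)/(2-\omega s_0)$ (Lemma~\ref{fact7.20}) and invoking the uniqueness Proposition~\ref{atmost1}, which avoids your ``$\la$-dependence cancels'' manoeuvre.
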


This theorem is a corollary to Theorem \ref{thm7.10}.

In an earlier paper \cite{ALY14} the authors gave another construction of the general rational $\Ga$-inner function $h=(s,p)$ of degree $n$, starting from different data, to wit, the royal nodes of $h$ and the zeros of $s$.  One step in the construction in \cite{ALY14} is to perform a Fej\'er-Riesz factorization of a non-negative trigonometric polynomial, whereas, in contrast, the construction in this paper can be carried out entirely in rational arithmetic.

%%%%%%%%%%%%%%%%%%%%%%%%%%%

\section{Background material}\label{background}

In this section we establish some notation and terminology and present some
elementary facts about the set $\Ga$ discussed in the introduction.

The following results afford useful criteria for membership of $\gam$ and $\M$ \cite{AY04}.
\begin{proposition}\label{prop2.10}
Let $(s,p) \in \ctwo$.  The point $(s,p)$ lies in $ \gam$ if and only if
\be\label{eq2.30}
|s| \le 2 \ and\ |s-\overline{s}p| \le 1-|p|^2. \notag
\ee
The point $(s,p)$ lies in $\M$ if and only if
\be\label{eq2.40}
|s| \le 2,\ |p| =1,\ and\ s-\overline{s}p = 0. \notag
\ee
\end{proposition}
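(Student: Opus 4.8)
The plan is to reduce membership in $\gam$ to a statement about where the roots of the quadratic $t^2-st+p$ lie, and to exploit throughout the elementary identity
\[
s-\bar s p \;=\; z(1-|w|^2)+w(1-|z|^2)\qquad\text{when } s=z+w,\ p=zw,
\]
which is immediate on expanding $\bar s p=(\bar z+\bar w)zw=|z|^2w+|w|^2z$. (The sufficiency half is essentially the Schur--Cohn stability test for a monic quadratic, but I would give the elementary argument.)

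For the necessity in the $\gam$ statement, write $(s,p)=(z+w,zw)$ with $|z|,|w|\le 1$. Then $|s|\le|z|+|w|\le 2$, and from the identity, together with $1-|z|^2,1-|w|^2\ge 0$, the ordinary triangle inequality gives $|s-\bar s p|\le|z|(1-|w|^2)+|w|(1-|z|^2)=(|z|+|w|)(1-|z||w|)$. Since $1-|p|^2=(1-|z||w|)(1+|z||w|)$ and $1-|z||w|\ge 0$, the desired bound $|s-\bar s p|\le 1-|p|^2$ reduces to the scalar inequality $|z|+|w|\le 1+|z||w|$, i.e.\ $(1-|z|)(1-|w|)\ge 0$, which holds. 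For the converse, given $|s|\le 2$ and $|s-\bar s p|\le 1-|p|^2$, let $z,w$ be the complex roots of $t^2-st+p$, so $z+w=s$ and $zw=p$ by Vi\`ete; order them so that $a:=|z|\le|w|=:b$, and note $ab=|p|\le 1$, hence $a\le 1$. It suffices to prove $b\le 1$, for then $(s,p)=(z+w,zw)\in\gam$. I would split into the cases $ab<1$ and $ab=1$. If $ab<1$ and $b>1$, then in $s-\bar s p=z(1-b^2)+w(1-a^2)$ the reverse triangle inequality $|u+v|\ge\big||u|-|v|\big|$ gives $|s-\bar s p|\ge\big|b(1-a^2)-a(b^2-1)\big|=(a+b)(1-ab)$; combining with the hypothesis $(a+b)(1-ab)\le(1-ab)(1+ab)$ and dividing by $1-ab>0$ yields $(1-a)(1-b)\ge 0$, which with $b>1$ forces $a=1$, hence $ab=b>1$, contradicting $ab<1$; so $b\le 1$. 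If $ab=1$ (i.e.\ $|p|=1$), then $1-|p|^2=0$ forces $s=\bar s p$; substituting $s=z+w$, $p=zw$ gives $z(1-b^2)=w(a^2-1)$, and if $a<1$ this solves to $w=z/a^2$, whence $|s|=|z+w|=a+a^{-1}>2$, contradicting $|s|\le 2$; so $a=1$ and $b=a^{-1}=1$.

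The $\M$ characterisation then follows quickly using $\M=\{(z+w,zw):|z|=|w|=1\}$. For necessity, if $|z|=|w|=1$ then $|s|\le 2$, $|p|=|zw|=1$, and the identity gives $s-\bar s p=0$. For sufficiency, if $|s|\le 2$, $|p|=1$ and $s-\bar s p=0$, then $|s-\bar s p|=0=1-|p|^2$, so by the first part $(s,p)\in\gam$, say $(s,p)=(z+w,zw)$ with $|z|,|w|\le 1$; but then $|z||w|=|p|=1$ forces $|z|=|w|=1$, so $(s,p)\in\M$. The only genuinely delicate point is the sufficiency for $\gam$ in the borderline case $|p|=1$, where the inequality $|s-\bar s p|\le 1-|p|^2$ collapses and one must bring in $|s|\le 2$; indeed $(s,p)=(\tfrac52,1)$ satisfies $s-\bar s p=0=1-|p|^2$ yet lies outside $\gam$, so both hypotheses are needed.
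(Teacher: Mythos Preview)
Your proof is correct. The paper itself does not prove this proposition: it simply cites the result from \cite{AY04} (``The following results afford useful criteria for membership of $\gam$ and $\M$ \cite{AY04}''), so there is no in-paper argument to compare against. Your approach---writing $(s,p)=(z+w,zw)$, using the identity $s-\bar s p = z(1-|w|^2)+w(1-|z|^2)$, and handling sufficiency via a root-location argument split into the cases $|p|<1$ and $|p|=1$---is a clean, self-contained elementary proof. The remark at the end about $(\tfrac52,1)$ is apt: it shows that the condition $|s|\le 2$ is not redundant and is genuinely needed in the borderline case $|p|=1$.
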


The interior of $\Ga$, the {\em open  symmetrized bidisc} 
\beq\label{defG}
\G\df \set{(z+w,z w)}{|z| < 1,|w| < 1}
\eeq
will also arise.

 Proposition \ref{prop2.10} implies that if  $h=(s,p) \in \hol(\D,\c^2)$  then $h$ is $\gaminn$ if and only if $p$ is inner, $|s|$ is bounded by 2 on $\d$ and $s(\tau)-\overline{s(\tau)}p(\tau) = 0$ for almost all $\tau \in \t$ with respect to Lebesgue measure (by Fatou's theorem, $s$ and $p$ have non-tangential limits a.e. on $ \t$). This paper focuses on the case that $h$ is rational (that is, $s$ and $p$ are rational), in which case $s=\bar s p$ on the whole of $\t$. 

 Let us clarify the notion of the degree of a rational
$\Gamma$-inner function $h$. 
\begin{definition}
 The degree $\deg(h)$ of a rational $\Ga$-inner function $h$ is defined to be  $ h_*(1)$, where 
$h_* : \mathbb{Z}= \pi_1(\T) \to \pi_1(\M)$ is the homomorphism of fundamental groups induced by $h$ when it is regarded as a continuous map from $\T$ to $\M$.
\end{definition}
According to \cite[Proposition 3.3]{ALY14}, for any rational $\Gamma$-inner function $h= (s,p)$, $\deg (h)$ is equal to the degree $\deg (p)$ (in the usual sense) of the finite Blaschke product $p$.

We denote by $\schur$ the {\em Schur class}, which comprises all holomorphic maps from $\d$ to $\d^-$.

%We shall need some properties of the phasar derivatives introduced in Section \ref{Nintro}.
\begin{definition}\label{phasar-derivative}
 For any differentiable function $f: \T \to \C \setminus \{0\}$
 the {\em phasar derivative} of $f$ at $z= \e^{\ii \ta} \in \T$
is  the derivative with respect to $\ta$ of the argument of $f(\e^{\ii \ta})$ at $z$; we denote it by $Af(z)$.
\end{definition}
Thus, if $f(\e^{\ii \ta}) = R(\ta) \e^{\ii g(\ta)}$ is differentiable,   where $g(\ta) \in\R$ and $R(\ta) >0$, then $g$ is differentiable on $[0,2\pi)$ and the phasar derivative of $f$ at $z= \e^{\ii \ta} \in \T$ is equal to
\begin{equation}\label{phasar-deriv-arg}
Af(\e^{\ii \ta}) = \frac{d}{d \ta} \arg f(\e^{\ii \ta}) = g'(\ta).
\end{equation}
The above is not standard notation, but we shall find it useful in the sequel.  We summarise some
elementary properties of phasar derivatives.
\begin{proposition}\label{elphasar}
\begin{enumerate}
\item For differentiable functions $\psi, \phi : \T \to \C \setminus \{0\}$ and
for any $c \in \C \setminus \{0\}$, 
\beq\label {phasar-deriv-sum}
A(\psi \phi ) =A \psi +A \phi \; \quad \text{and} \; \quad A(c \psi  ) =A \psi.
\eeq

\item   For any rational inner function $\phi$ and for all $z \in \T$,
\begin{equation} \label{phasar-deriv-inner}
 A\phi(z)= z \frac{\phi'(z)}{\phi(z)}.
\end{equation}
\item If $\al\in\D$ and
$$
B_\alpha(z) = \frac{z-\alpha}{1-\overline \alpha z}
$$
then
\[
A B_\alpha(z)=\frac{1-|\al|^2}{|z-\al|^2} >0 \quad \mbox{ for }z\in \T.
\]
\item For any rational inner function $p$,
$$
A p (z) >0 \quad \mbox{ for  all } z \in \T.
$$
\end{enumerate}
\end{proposition}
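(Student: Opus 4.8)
The plan is to read all four assertions off the defining formula \eqref{phasar-deriv-arg} by elementary calculus; parts (1) and (2) carry the analytic content, after which (3) and (4) follow by direct computation. The observation that makes (1) painless is the alternative expression
\[
Af(\e^{\ii\ta}) \;=\; \IM\!\left(\frac{\tfrac{d}{d\ta}\,f(\e^{\ii\ta})}{f(\e^{\ii\ta})}\right),
\]
valid for any differentiable non-vanishing $f\colon\T\to\C$: if $f(\e^{\ii\ta})=R(\ta)\,\e^{\ii g(\ta)}$ with $R>0$ and $g$ real (both differentiable near the point in question), then $\tfrac{d}{d\ta}f(\e^{\ii\ta})=\big(R'(\ta)+\ii R(\ta)g'(\ta)\big)\e^{\ii g(\ta)}$, so the bracket above equals $R'(\ta)/R(\ta)+\ii g'(\ta)$, whose imaginary part is $g'(\ta)=Af(\e^{\ii\ta})$. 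Both identities in \eqref{phasar-deriv-sum} are now immediate: the product rule gives $\tfrac{d}{d\ta}(\psi\phi)\big/(\psi\phi)=\big(\tfrac{d}{d\ta}\psi\big)/\psi+\big(\tfrac{d}{d\ta}\phi\big)/\phi$, and taking imaginary parts yields $A(\psi\phi)=A\psi+A\phi$; for a non-zero constant $c$ the factor cancels in $\tfrac{d}{d\ta}(c\psi)\big/(c\psi)$, so $A(c\psi)=A\psi$. Here the functions are only assumed differentiable, so I deliberately avoid invoking a complex derivative.

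For (2), recall that a rational inner function $\phi$ is a finite Blaschke product, hence holomorphic and zero-free on a neighbourhood of $\T$; the chain rule then gives $\tfrac{d}{d\ta}\phi(\e^{\ii\ta})=\ii\,\e^{\ii\ta}\phi'(\e^{\ii\ta})$. Since $|\phi|\equiv 1$ on $\T$, the factor $R$ from (1) is identically $1$, so $\phi(\e^{\ii\ta})=\e^{\ii g(\ta)}$ and $\tfrac{d}{d\ta}\phi(\e^{\ii\ta})=\ii g'(\ta)\phi(\e^{\ii\ta})$. Comparing the two expressions for $\tfrac{d}{d\ta}\phi(\e^{\ii\ta})$, with $z=\e^{\ii\ta}$, gives $z\phi'(z)/\phi(z)=g'(\ta)=A\phi(z)$ — in particular $z\phi'(z)/\phi(z)$ is real on $\T$ — which is \eqref{phasar-deriv-inner}.

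For (3), a direct computation gives $B_\al'(z)=(1-|\al|^2)/(1-\bar\al z)^2$, so by (2)
\[
AB_\al(z)=z\,\frac{B_\al'(z)}{B_\al(z)}=\frac{z\,(1-|\al|^2)}{(1-\bar\al z)(z-\al)} ;
\]
for $z\in\T$ one has $\bar z=1/z$, whence $(1-\bar\al z)(z-\al)=z\,(z-\al)(\bar z-\bar\al)=z\,|z-\al|^2$, and therefore $AB_\al(z)=(1-|\al|^2)/|z-\al|^2$, which is $>0$ since $|\al|<1$. For (4), writing a non-constant rational inner function as $p=c\prod_{j=1}^m B_{\al_j}$ with $|c|=1$, $\al_j\in\D$ and $m=\deg p\ge 1$ (the classical fact recalled in the Introduction), repeated use of \eqref{phasar-deriv-sum} gives $Ap=\sum_{j=1}^m AB_{\al_j}$, a sum of strictly positive functions by (3), so $Ap(z)>0$ on $\T$. (A unimodular constant would give $Ap\equiv 0$; throughout, rational inner functions are understood to be non-constant, as they are in all our applications.)

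None of this is deep. The only two places calling for a little care are the passage in (2) between $\tfrac{d}{d\ta}\phi(\e^{\ii\ta})$ and the complex derivative $\phi'$ — where the hypothesis that $\phi$ is \emph{rational}, not merely inner, is used to guarantee holomorphy across $\T$ — and, in (4), the appeal to the representation of rational inner functions as finite Blaschke products.
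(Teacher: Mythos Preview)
Your proof is correct and complete. The paper in fact gives no proof of this proposition at all --- it is stated as a summary of ``elementary properties'' and the proof environment that follows in the source is commented out; what survives there sketches only parts (3) and (4), via exactly the same route you take (compute $AB_\alpha$ from \eqref{phasar-deriv-inner}, then factor $p$ as a finite Blaschke product and use additivity). Your treatment of (1) and (2) supplies the details the paper simply assumes, and your parenthetical remark on the degenerate constant case in (4) is a fair caveat.
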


\begin{comment}
\begin{proof} (i) By Proposition \ref{AofInner},  for all $\alpha \in \D$ and $\lambda \in \T$, we have
\beq\label{phasar-Bl}
 A B_\alpha (\lambda) = 
\lambda \frac{B'_\alpha(\lambda)}{B_\alpha(\lambda)}= 
\frac{1-|\alpha|^2}{|1-\overline{\alpha} \lambda|^2} >0.
\eeq

(ii) For such $p$, it is well known that there exist $c \in \T$ and $
\alpha_1, \alpha_2, \dots,\alpha_n \in \D$ such that
$p = c B_{\alpha_1}  \dots B_{\alpha_n}$.
Then, by Remark \ref{phasar-deriv-sum}  and Part (i),
\[
A p  = A B_{\alpha_1}  + A B_{\alpha_2}  +  \dots + A B_{\alpha_n}  >0.\eqno{ \Box}
\]
\end{proof}
\end{comment}

Recall that  a point $\la \in \d^-$ is a  royal node of  a $\Gamma$-inner function $h$ if and only if  $h(\la)$ is in the royal variety $\royal = \{(2z, z^2): z \in \C \}$.

In the next proposition we shall use the notation $\Phi(z,s,p)$ as a synonym for $\Phi_z(s,p)$.  Thus, for any function $\ups$ on $\d$,
\[
\Phi \circ (\up, h) = \frac{2 \up p -s}{2- \up s}. 
\]
\begin{proposition}\label{phaser_h} 
Let $h =(s,p)$ be a rational $\Gamma$-inner function and let $\sigma$ be a royal node of $h$ on $\T$.
Then
 
{\rm (i)} there exists $\eta \in \T$ such that $p(\sigma)= \eta^2$ and $s(\sigma) = -2 \eta$;

{\rm (ii)} $\sigma$ is a  zero of $s^2-4p$ of multiplicity at least $2$;

{\rm (iii)} for any finite Blaschke product $\up$,
$$\Phi \circ (\up, h)(\sigma) = \eta$$
independent of $\up$;

{\rm (iv)} for any finite Blaschke product $\up$ such that 
$\up(\sigma) \neq - \bar{\eta}$, 
\[
A\Phi \circ (\up, h)(\sigma) = \half A p(\sigma).
\]
\end{proposition}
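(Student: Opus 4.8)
The plan is: prove (i) by direct substitution; prove (ii) --- the main point --- by a one-variable extremum argument along $\t$; and then obtain (iii) and (iv) as short computations from (ii) and the phasar-derivative calculus of Proposition~\ref{elphasar}. We may assume $h$ is non-constant (otherwise all four assertions are trivial), so that $p$ is a non-constant finite Blaschke product and hence $Ap>0$ on $\t$ by Proposition~\ref{elphasar}(iv); moreover $s$, $p$ and $s^2-4p$ are holomorphic near $\sigma$, since $h(\sigma)$ is defined and $s,p$ are rational. For (ii) we additionally assume $s^2-4p\not\equiv0$, since otherwise $h=(2f,f^2)$ for some inner $f$ and the assertion is vacuous. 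Statement (i) is immediate: $h(\sigma)\in\royal$ gives $s(\sigma)^2=4p(\sigma)$, so $\eta:=-\tfrac{1}{2} s(\sigma)$ satisfies $s(\sigma)=-2\eta$ and $p(\sigma)=\tfrac{1}{4} s(\sigma)^2=\eta^2$, and $|\eta|^2=|p(\sigma)|=1$ forces $\eta\in\t$.

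For (ii), work on an arc of $\t$ about $\sigma=\e^{\ii\theta_0}$. Since $p$ is inner and non-vanishing there, choose a real-analytic branch $g(\theta)$ of $\arg p(\e^{\ii\theta})$ and put $u(\theta):=\tfrac{1}{2} s(\e^{\ii\theta})\,\e^{-\ii g(\theta)/2}$. The $\Gamma$-inner relation $s=\bar s p$ on $\t$ forces $u$ to be real-valued, the bound $|s|\le2$ gives $|u|\le1$, and a direct check yields $(s^2-4p)(\e^{\ii\theta})=4\bigl(u(\theta)^2-1\bigr)p(\e^{\ii\theta})$. The hypothesis that $\sigma$ is a royal node says precisely $u(\theta_0)^2=1$, i.e. $u(\theta_0)=\pm1$; as $u$ is real with $|u|\le1$, this makes $\theta_0$ a local extremum of $u$, so $u'(\theta_0)=0$, whence $u^2-1$, and therefore $\theta\mapsto(s^2-4p)(\e^{\ii\theta})$, vanishes to order at least $2$ at $\theta_0$. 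This is equivalent to $s^2-4p$ having a zero of order at least $2$ at $\sigma$, which proves (ii). In particular $(s^2-4p)'(\sigma)=0$; written out with $s(\sigma)=-2\eta$, this is the identity
\[
s'(\sigma)=-\bar\eta\,p'(\sigma),
\]
which is the only consequence of (ii) used below.

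For (iii): substituting $s(\sigma)=-2\eta$ and $p(\sigma)=\eta^2$ into $\Phi\circ(\up,h)=(2\up p-s)/(2-\up s)$ and cancelling the common factor $1+\up(\sigma)\eta$ gives the value $\eta$ whenever $\up(\sigma)\ne-\bar\eta$. If $\up(\sigma)=-\bar\eta$, both numerator and denominator vanish at $\sigma$; using $s'(\sigma)=-\bar\eta\,p'(\sigma)$ and the formulas $A\up(\sigma)=\sigma\up'(\sigma)/\up(\sigma)$, $Ap(\sigma)=\sigma p'(\sigma)/p(\sigma)$ of Proposition~\ref{elphasar}(ii), one computes the derivative of $2-\up s$ at $\sigma$ to be $-\sigma^{-1}\bigl(2A\up(\sigma)+Ap(\sigma)\bigr)$, which is non-zero because $Ap(\sigma)>0$; hence the singularity of $\Phi\circ(\up,h)$ at $\sigma$ is removable, and l'Hôpital's rule again gives the value $\eta$. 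So (iii) holds for every finite Blaschke product $\up$. For (iv), with $\up(\sigma)\ne-\bar\eta$ the numerator $N=2\up p-s$ and denominator $D=2-\up s$ are both non-zero at $\sigma$, so by Proposition~\ref{elphasar}(i),
\[
A\bigl(\Phi\circ(\up,h)\bigr)(\sigma)=AN(\sigma)-AD(\sigma)=\re\Bigl(\sigma\Bigl(\tfrac{N'(\sigma)}{N(\sigma)}-\tfrac{D'(\sigma)}{D(\sigma)}\Bigr)\Bigr).
\]
Since $N(\sigma)=\eta D(\sigma)$, the bracket equals $\sigma\bigl(N'(\sigma)-\eta D'(\sigma)\bigr)/\bigl(\eta D(\sigma)\bigr)$; and using $s'(\sigma)=-\bar\eta\,p'(\sigma)$ a short reduction gives $N'(\sigma)-\eta D'(\sigma)=p'(\sigma)\bigl(\up(\sigma)+\bar\eta\bigr)$ and $\eta D(\sigma)=2\eta^2\bigl(\up(\sigma)+\bar\eta\bigr)$, so the bracket is $\sigma p'(\sigma)/(2\eta^2)$. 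Since $\eta^2=p(\sigma)$ and $\sigma p'(\sigma)/p(\sigma)=Ap(\sigma)$ is real, we conclude $A(\Phi\circ(\up,h))(\sigma)=\tfrac{1}{2}Ap(\sigma)$, which is (iv).

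The crux is (ii): it is not clear a priori why a royal node on $\t$ must be a \emph{double} zero of $s^2-4p$ rather than a simple one, and the device that makes this transparent is to write $s=2u\sqrt{p}$ on $\t$ with $u$ real, so that a royal node on $\t$ corresponds to $u$ hitting the endpoint $\pm1$ of $[-1,1]$ at $\theta_0$, which (since $|u|\le1$) is automatically a critical point. Once the identity $s'(\sigma)=-\bar\eta\,p'(\sigma)$ is in hand, parts (iii) and (iv) are routine arithmetic.
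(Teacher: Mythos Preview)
Your proof is correct and follows essentially the same approach as the paper. For (ii), the paper uses the observation that $\bar p(4p-s^2)=4-|s|^2\ge 0$ on $\t$ and applies the extremum argument directly to $4-|s|^2$; your substitution $s=2u\sqrt{p}$ with $u$ real is just a reparametrization of the same idea, since $4-|s|^2=4(1-u^2)$. For (iv), your use of the relation $N(\sigma)=\eta D(\sigma)$ to combine the two phasar-derivative terms into $\sigma(N'-\eta D')/(\eta D)$ is a slightly cleaner bookkeeping device than the paper's direct expansion, and your explicit use of the real-part formula $Af(\sigma)=\re(\sigma f'(\sigma)/f(\sigma))$ for non-inner $f$ is more careful than the paper, which silently drops the $\re$ (harmlessly, since the final expression is real).
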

\begin{proof}
(i) By \cite[Lemma 7.10]{ALY12}, the royal nodes of $h =(s,p)$ on $\T$ are precisely the points $\sigma \in \T$ such that $|s(\sigma) | =2$. Thus there exists $\eta \in \T$ such that
$s(\sigma) = -2 \eta$ and, since $ 4p(\sigma)= s(\sigma)^2$,  we have $p(\sigma)= \eta^2$.

(ii) By Proposition \ref{prop2.10}, on $\T$ we have
\[
\bar{p}(4 p -s^2) =4 -(\bar{p} s) s = 4- \bar{s}s = 4- |s|^2 \ge 0.
\] 
Since $|s(\sigma) | =2$, the function $f(\theta) = 4 -|s(e ^{i \theta}) |^2$ has a local minimum at $\xi$ where $\sigma =e^{i \xi}$. Therefore
\begin{eqnarray}
\label {deriv-1}
0 &=& \frac{d}{d \theta}\left( 4 - |s(e ^{i \theta}) |^2 \right)_{|_{\xi}} \nn\\
&=& ~\frac{d}{d \theta}
\left(\bar{p}(4 p -s^2)(e ^{i \theta})\right)_{|_{\xi}} \nn\\
&=&p(e^{i \xi}) i e^{i \theta}(4 p'(e^{i \xi})- 2 s s'(e^{i \xi})).
\end{eqnarray}
Hence
\[
(4p -s^2)'(\sigma)=0,
\]
and so $\sigma$ is a zero of $s^2 -4p$ of multiplicity at least $2$.

(iii) If $\up s (\sigma) \neq 2$, then
\begin{eqnarray}
\label {phi_sigma}
\Phi \circ (\up, h)(\sigma)&=&  \frac{2 \up p -s}{2- \up s} (\sigma)\nn\\
&=& \frac{2 \up \frac{1}{4} s^2 -s}{2- \up s} (\sigma) \nn\\
&=& \frac{s (\up \half s -1)}{2(1- \half \up s)} (\sigma) \nn\\
&=& -\half s(\sigma) = \eta.
\end{eqnarray}
Thus
$\Phi \circ (\up, h)(\sigma) = \eta$
independent of $\up$, as long as $\up s (\sigma) \neq 2$, that is, $\up(\sigma) \neq - \bar{\eta}$.

 For any finite Blaschke product $\up$ such that 
$\up(\sigma) = - \bar{\eta}$, by Proposition \ref{elphasar}, we have
$$ Ap(\sigma) = \sigma \frac{p'(\sigma)}{p(\sigma)}= \sigma \bar{\eta}^2 p'(\sigma)$$
and 
$$ A \up(\sigma) = \sigma \frac{\up'(\sigma)}{\up(\sigma)}= - \sigma \frac{\up'(\sigma)}{\bar{\eta}}.$$
Since $\up$ and $p$ are inner functions $$ A \up(\sigma) \neq  - \half Ap(\sigma),$$
which is equivalent to
$$\up'(\sigma) \neq \half \bar{\eta}^3 p'(\sigma).$$
Note that $\up(\sigma) = - \bar{\eta}$ implies that
$$ 2 \up(\sigma) p(\sigma) -s(\sigma) =0 = 2 - \up(\sigma)s(\sigma),
$$
and so
\begin{eqnarray}
\label {phi_sigma_2}
\Phi \circ (\up, h)(\sigma)&=&  \frac{(2 \up p -s)'}{(2- \up s)'} (\sigma)\nn\\
&=& \frac{2 \up' p+ 2 \up p' -s'}{- \up' s- \up s'} (\sigma) \nn\\
&=& \frac{2 \up' \eta^2+ 2 (-\bar{\eta}) p' +\bar{\eta} p'}{- \up' (-2 \eta)- (-\bar{\eta})(-\bar{\eta} p')} (\sigma) \nn\\
&=& \frac{2 \eta^2 \up'-\bar{\eta} p'}{2 \eta \up'-\bar{\eta}^2 p'} (\sigma) \nn\\
&=& \eta \frac{2 \eta \up'-\bar{\eta}^2 p'}{2 \eta \up'-\bar{\eta}^2 p'}(\sigma) = \eta.
\end{eqnarray}
Thus
$\Phi \circ (\up, h)(\sigma) = \eta$
independent of $\up$.

(iv)  For any finite Blaschke product $\up$ such that 
$\up(\sigma) \neq - \bar{\eta}$, by Proposition \ref{elphasar}, we have
\begin{eqnarray}\label {Phasar_sigma}
A\Phi \circ (\up, h)(\sigma) &=& A(2 \up p -s)(\sigma)  - A(2- \up s) (\sigma) \nn\\
 &=& \sigma \frac{2 \up' p +2 \up p' - s'}{2 \up p -s}(\sigma) - \sigma \frac{-\up' s - \up s'}{2- \up s}(\sigma)\nn\\
 &=&  \sigma \frac{2 \up' \eta^2 +2 \up p' +\bar{\eta} p'}{2 \up \eta^2 +2 \eta}(\sigma) + \sigma \frac{\up' (-2 \eta) + \up (-\bar{\eta} p')}{2+ 2\up \eta}(\sigma)\nn\\
 &=& \frac{\sigma}{2+ 2\up \eta} \left(2 \up' \eta +2 \bar{\eta} \up p' +\bar{\eta}^2 p' - 2\up' \eta - \up \bar{\eta} p' \right)(\sigma)\nn\\
&=& \frac{\sigma \bar{\eta}^2p'(\sigma)(1+ \up \eta)}{2(1+ \up \eta)}\nn\\
&=&\half \frac{\sigma p'(\sigma)}{ p(\sigma)}\nn\\
&=&\half A p(\sigma).
\end{eqnarray}
\qed
\end{proof}

%%%%%%%%%%%%%%%%%%%%%%%%%%%%%%%
\section{The Blaschke interpolation problem and rational $\gaminn$ functions}\label{mixed}
The  Blaschke interpolation problem, Problem \ref{prob}, is  an algebraic variant of the classical Pick interpolation problem. One seeks a Blaschke product of a given degree $n$ satisfying $n$ interpolation conditions, rather than merely a Schur-class function, and one admits interpolation nodes in both the open unit disc and the unit circle.  As with the classical Nevanlinna-Pick problem, there is a criterion for the solvability of such a problem in terms of the positivity of a `Pick matrix' formed from the interpolation data; however, to obtain a concise formulation, one has to impose additional interpolation conditions, on phasar derivatives at the interpolation nodes on the circle, and the bounds on these phasar derivatives appear on the diagonal entries of the Pick matrix.  This modified Pick matrix appears in the work of several authors \cite{bgr,AgMcC,Sar,geo},  but for simplicity we shall continue to speak of the Pick matrix. To be precise, the \emph{Pick matrix} associated with Blaschke interpolation data $(\si,\eta,\rho)$ as in Definition \ref{blaschkedata} is defined to be the $n \times n$ matrix $M=[m_{ij}]_{i,j=1}^n$ with entries
$$
m_{ij} = \twopartdef{\rho_i}{\mbox{ if }i=j\leq k}{\ds \frac{1-\overline{\eta_i}\eta_j}{1-\overline{\sigma_i}\sigma_j}}{\mbox{ otherwise}.}
$$
\begin{remark} {\rm
Of course, it can happen for $n$-point Blaschke interpolation data $(\si,\eta,\rho)$ that there exists a  Blaschke product $\phi$ of degree {\em strictly} less than $n$  satisfying the conditions \eqref{eq6.2} to \eqref{eq6.4}, but in the present context we are concerned with solutions of degree exactly $n$.
}\end{remark}

In the case that $n=k$, that is, where all the interpolation nodes lie on the unit circle there is an elegant solvability criterion due to D. Sarason \cite{Sar}.
His result implies that, when $n=k$,  Problem \ref{prob} is solvable if and only if the corresponding Pick matrix $M$ is minimally positive, that is, when $M\geq 0$ and there is no positive diagonal $n \times n$ matrix $D$, other than $D =0$, such that $M \ge D$.
Actually, Sarason considers interpolation by functions in the Schur class, not just Blaschke products, and so there is a subtlety concerning the existence of phasar derivatives at boundary points (related to the Julia-Carath\'eodory theorem), but since we are only concerned with rational functions, no such difficulty will arise here.

 The following result is well known -- see \cite[Sections 21.1 and 21.4]{bgr} or \cite{AgMcC,Sar,geo}.
\begin{prop}\label{M>0}
If Problem {\rm \ref{prob}} is solvable then the corresponding Pick matrix $M$ is positive definite and the solution of the problem is not unique.
\end{prop}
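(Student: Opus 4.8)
The plan is to separate the two assertions and treat them in order. For the first assertion, suppose Problem \ref{prob} has a solution $\phi$, a finite Blaschke product of degree exactly $n$. The natural model is the de Branges--Rovnyak space $\mathcal H(\phi)$ associated with $\phi$; since $\phi$ is a Blaschke product of degree $n$, this space is $n$-dimensional and coincides with the model space $H^2 \ominus \phi H^2$, with reproducing kernel $k^\phi_w(z) = \frac{1-\overline{\phi(w)}\phi(z)}{1-\bar w z}$. I would observe that the Pick matrix $M$ is precisely the Gram matrix of the vectors $k^\phi_{\sigma_{k+1}}, \dots, k^\phi_{\sigma_n}$ together with, for the boundary nodes $\sigma_1,\dots,\sigma_k$, the appropriately normalized boundary kernel vectors whose squared norm equals the phasar derivative $A\phi(\sigma_i) = \rho_i$ (this is the standard Julia--Carath\'eodory identity $\|k^\phi_{\sigma_i}\|^2 = A\phi(\sigma_i)$, valid because $\phi$ is rational and hence smooth up to $\mathbb T$). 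Thus $M \ge 0$ automatically. To upgrade this to \emph{positive definite}, I would show these $n$ kernel vectors are linearly independent in the $n$-dimensional space $\mathcal H(\phi)$: a nontrivial linear combination that vanishes would force a nonzero element of $\mathcal H(\phi)$ to be orthogonal to all of them, but reproducing-kernel evaluation shows such an element vanishes at all the interior nodes and has a boundary zero at each circle node, giving $n$ zeros for a function in an $n$-dimensional model space forced by degree considerations to be identically zero --- a contradiction. Hence $M > 0$.

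For the second assertion, that the solution is not unique, I would argue via a dimension/perturbation count. Given one solution $\phi$, the associated linear-fractional parametrization \eqref{lnfrzeta} already anticipated in the introduction produces, as $\zeta$ ranges over a cofinite subset of $\mathbb T$, a whole circle's worth of degree-$n$ Blaschke products satisfying the same conditions; so the cleanest route is simply to exhibit a second solution directly. Concretely, having $M>0$ strictly, the interpolation conditions \eqref{eq6.2}--\eqref{eq6.4} cut out only an $(n-1)$-real-parameter slice of something, while the Blaschke products of degree $n$ form a manifold of higher dimension, so there is room to move; alternatively, and more self-containedly, one can use the commutant lifting / Schur-algorithm description: the strict positivity of $M$ means the problem is in the ``indeterminate'' case, where the Schur-class solution set is a nondegenerate matrix ball, and the Blaschke-product solutions are exactly the ``boundary'' choices of the free Schur parameter, of which there are infinitely many.

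I expect the main obstacle to be the second part --- pinning down non-uniqueness cleanly without simply invoking the full parametrization \eqref{lnfrzeta} (which is developed later in the paper and which one may not wish to assume here). The safest approach is probably to prove directly that from one solution $\phi$ one can manufacture another: fix $\omega \in \mathbb T$ not among the ``bad'' values and consider a one-parameter deformation within the solution set, or add to $\phi$ a Blaschke factor and cancel one --- but this must be done so as to preserve \emph{all} $n$ interpolation conditions \emph{and} the degree, which is exactly where the strict positivity $M>0$ (just proved) buys the needed slack. An honest write-up would either (a) cite \cite[Sections 21.1, 21.4]{bgr} for the linear-fractional parametrization and note that its coefficient circle is nondegenerate precisely when $M>0$, or (b) give the short model-space argument: since $\dim \mathcal H(\phi) = n$ and the $n$ kernel vectors are a basis (first part), the map sending a degree-$n$ Blaschke product to its ``reduced'' interpolation data has surjective differential, hence positive-dimensional fibres. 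I would go with route (a) as the main line and sketch (b) as a remark, since the paper is already set up to lean on \cite{bgr}, \cite{Sar}, \cite{geo}.
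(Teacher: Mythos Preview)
The paper does not actually prove this proposition; it simply records it as well known and cites \cite[Sections 21.1 and 21.4]{bgr}, \cite{AgMcC}, \cite{Sar}, \cite{geo}. Your proposal therefore supplies more than the paper itself does, and what you outline is essentially the standard argument that lies behind those references.

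Your first part is correct: for a Blaschke product $\phi$ of degree $n$, the model space $H^2\ominus\phi H^2$ is $n$-dimensional, the Pick matrix $M$ is the Gram matrix of the interior reproducing kernels together with the boundary kernels (whose squared norms are the phasar derivatives $A\phi(\sigma_j)=\rho_j$), and linear independence of these $n$ vectors in an $n$-dimensional space yields $M>0$. One small wording issue: the sentence ``a nontrivial linear combination that vanishes would force a nonzero element of $\mathcal H(\phi)$ to be orthogonal to all of them'' is slightly tangled. The clean statement is that if the $n$ kernels were linearly dependent they would span a proper subspace of the $n$-dimensional $\mathcal H(\phi)$, so the orthogonal complement would contain a nonzero $f$; then by the reproducing property $f$ vanishes at each interior node and (via the boundary reproducing property) at each boundary node, giving $n$ zeros of a function of the form $q/D$ with $\deg q\le n-1$, hence $f=0$.

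For non-uniqueness your instinct is right that the cleanest route is to invoke the linear-fractional parametrization, which is nondegenerate precisely when $M>0$; this is exactly what the paper's citation to \cite{bgr} points to, and indeed the paper goes on to redevelop that parametrization in Theorem~\ref{thm6.10}. Your alternative ``route (b)'' via a dimension count is plausible heuristically but would need more care to make rigorous at this stage, so your decision to lead with the citation is the right call and matches the paper's treatment exactly.
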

 Several authors have developed deep and far-reaching machines to characterise solvability of interpolation problems for classes related to Problem \ref{prob}, and to parametrize their sets of solutions \cite{bgr,BH,BD,kh1998,ChH,geo};  there is a brief history in \cite[Notes for Part V, page 500]{bgr}. 
A paper which addresses the combined interior and boundary problem specifically for finite Blaschke products is \cite{gl2002}.
  However, we have not found the precise statement that we need, and so, for the convenience of the reader, we give a self-contained treatment.

Our strategy for the construction of the general solution of Problem {\rm \ref{prob}} is to adjoin an additional boundary interpolation condition; this augmented problem will have a unique solution, and in this way we obtain all solutions of Problem {\rm \ref{prob}} in terms of a unimodular parameter.

The following is a refinement of the Sarason Interpolation Theorem \cite{Sar}, in that we consider interpolation nodes both on the circle and in the open disc.  The result is contained in \cite[Theorem 2.5]{bol2011}.   See also \cite[Theorem 5.2]{ChH} for a solution to the analogous interpolation problem for the upper half plane.
\begin{theorem}\label{refSarason} 
Let $M$ be the Pick matrix associated with Blaschke interpolation data $(\si,\eta,\rho)$. 
\begin{enumerate}
\item There exists a function $\phi$ in the Schur class such that 
\begin{align}
\phi(\sigma_j) &= \eta_j \quad \text{ for } j=1, \ldots, n, \label{eq6.2_2} 
\end{align}
and the phasar derivative $A\ph(\si_j)$ exists 
and satisfies
\begin{align}
 A\phi(\sigma_j)&\leq\rho_j \quad \text{ for } j=1, \ldots, k  \label{eq6.4_2a}
\end{align}
if and only if $M\geq 0$;
\item  if $M$ is positive and of rank $r < n$ then there is a {\em unique} function $\ph$ in the Schur class satisfying conditions \eqref{eq6.2_2} and \eqref{eq6.4_2a}, and this function is a  Blaschke product of degree $r$;
\item   the unique function $\ph$ in statement {\rm (2)} satisfies
\begin{align}
 A\phi(\sigma_j)&=\rho_j \quad \text{ for } j=1, \ldots, k  \label{eq6.4_2}
\end{align}
if and only if $M$ is minimally positive.
\end{enumerate}
\end{theorem}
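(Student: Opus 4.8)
The plan is to prove the solvability criterion (1), the determinacy statement (2), and the characterisation (3) in turn, using de Branges--Rovnyak spaces, Julia--Carath\'eodory boundary theory, and the lurking-isometry construction of interpolants (the result is \cite[Theorem 2.5]{bol2011}, but I would give a self-contained argument on these lines). For $\phi$ in the Schur class write $k^\phi_w(z)=\frac{1-\overline{\phi(w)}\phi(z)}{1-\bar wz}$; these are the reproducing kernels of the space $\h(\phi)$, and $\phi$ is a finite Blaschke product iff $\h(\phi)$ equals the model space $K_\phi=H^2\ominus\phi H^2$, whose dimension is $\deg\phi$. For the necessity of $M\ge0$ in (1): if $\phi\in\schur$ satisfies $\phi(\sigma_j)=\eta_j$ and $A\phi(\sigma_j)\le\rho_j$ for $j\le k$, then at interior indices $\langle k^\phi_{\sigma_j},k^\phi_{\sigma_i}\rangle_{\h(\phi)}=m_{ij}$, while at a boundary index the Julia--Carath\'eodory theorem gives that finiteness of $A\phi(\sigma_j)$ forces $\phi$ to have nontangential limit $\eta_j\in\t$ at $\sigma_j$, forces the boundary kernel $k^\phi_{\sigma_j}:=\lim_{z\to\sigma_j}k^\phi_z$ to lie in $\h(\phi)$ with $\|k^\phi_{\sigma_j}\|^2=A\phi(\sigma_j)$, and forces it to reproduce nontangential boundary values, whence $\langle k^\phi_{\sigma_j},k^\phi_{\sigma_i}\rangle_{\h(\phi)}=m_{ij}$ for $i\ne j$ as well. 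Thus the Gram matrix $M'$ of $k^\phi_{\sigma_1},\dots,k^\phi_{\sigma_n}$ in $\h(\phi)$ is positive semidefinite and $M=M'+\Delta$, where $\Delta$ is the nonnegative diagonal matrix with $j$th entry $\rho_j-A\phi(\sigma_j)$ for $j\le k$ and $0$ for $j>k$; so $M\ge M'\ge0$.

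For sufficiency in (1) and for (2) I would use the colligation construction. From the definition of $M$ one has $(1-\bar\sigma_i\sigma_j)m_{ij}=1-\bar\eta_i\eta_j$ for all $i,j$ — note that at a boundary diagonal entry both sides vanish, so the identity does not see $\rho_j$. With $r=\rank M$ and $\h_M=\ran M$ carrying the inner product induced by $M$, this identity produces an isometry between two subspaces of $\h_M\oplus\c$, which I would extend to a unitary $\bpm A & B\\ C & D\epm$ on $\h_M\oplus\c$, and set $\phi(z)=D+zC(I-zA)^{-1}B$. Then $\phi$ is a finite Blaschke product of degree at most $r$; matching it against the colligation identity gives $\phi(\sigma_j)=\eta_j$ at the interior nodes and, on passing to boundary limits — legitimate since a finite Blaschke product is rational with poles off $\d^-$ and hence smooth on $\t$ — at the boundary nodes too, together with $A\phi(\sigma_j)\le m_{jj}=\rho_j$ for $j\le k$. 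The degree is exactly $r$ by minimality of the realisation, $A$ being a compression of multiplication by the distinct scalars $\sigma_j$. When $r<n$ the solution is unique: if $\psi\in\schur$ is any solution and $0\ne c\in\ker M$, then by the necessity computation $\big\|\sum_j c_jk^\psi_{\sigma_j}\big\|^2_{\h(\psi)}=c^*M'_\psi c\le c^*Mc=0$, so $\sum_j c_j\frac{1-\bar\eta_j\psi(z)}{1-\bar\sigma_j z}\equiv0$, and solving this linear relation for $\psi$ exhibits it as an explicit rational function of $z$ independent of $\psi$ (the denominator is not identically zero because the polynomials $\prod_{l\ne j}(1-\bar\sigma_l z)$, $1\le j\le n$, are linearly independent). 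Hence every Schur solution coincides with the Blaschke product above, of degree $r$.

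For (3), let $\phi$ be the unique solution of (2), so $\deg\phi=r<n$, and let $M'$ be the Gram matrix of $k^\phi_{\sigma_1},\dots,k^\phi_{\sigma_n}$ in $\h(\phi)=K_\phi$; as above $M=M'+\Delta$ with $\Delta$ nonnegative diagonal, $\Delta_{jj}=\rho_j-A\phi(\sigma_j)$ for $j\le k$ and $0$ otherwise. Since $K_\phi$ is $r$-dimensional and consists of rational functions with numerators of degree $<r$, any $r$ of the $k^\phi_{\sigma_j}$ (the nodes being distinct) are linearly independent, so — as $n>r$ — all $n$ of them span $K_\phi$ and $\rank M'=r=\rank M$; hence $\ran M=\ran M'$ and $\ran\Delta\subseteq\ran M'$. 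Now $M$ fails to be minimally positive exactly when some $e_j\in\ran M$: indeed $M-\varepsilon e_je_j^*\ge0$ for small $\varepsilon>0$ iff $e_j\in\ran M$, and any nonzero nonnegative diagonal matrix $\le M$ dominates some $\varepsilon e_je_j^*$. If $\Delta\ne0$ then $\ran\Delta=\spa\{e_j:\rho_j>A\phi(\sigma_j)\}$ is a nonzero subspace of $\ran M$, so $M$ is not minimally positive. If $\Delta=0$ then $M=M'$ and no $e_j$ lies in $\ran M'=(\ker M')^\perp$: otherwise every relation $\sum_l c_lk^\phi_{\sigma_l}=0$ would have $c_j=0$, i.e.\ $k^\phi_{\sigma_j}\notin\spa\{k^\phi_{\sigma_l}:l\ne j\}$, which is impossible since among the $n-1\ge r$ kernels $k^\phi_{\sigma_l}$ ($l\ne j$) one can choose $r$ that are independent and hence span $K_\phi\ni k^\phi_{\sigma_j}$. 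So $M$ is minimally positive iff $\Delta=0$, i.e.\ iff $A\phi(\sigma_j)=\rho_j$ for $j=1,\dots,k$.

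The hard part, and where I expect most of the work, is the boundary package used in the necessity step — the Julia--Carath\'eodory fact that finiteness of the phasar derivative $A\phi(\sigma)$ is equivalent to the boundary kernel $k^\phi_\sigma$ belonging to $\h(\phi)$, with norm-square $A\phi(\sigma)$ and the stated reproducing property — together with the careful verification that the colligation genuinely delivers the boundary interpolation conditions and the bound $A\phi(\sigma_j)\le\rho_j$; for rational $\phi$ these reduce to smooth boundary behaviour on $\t$ but still require care, as does the minimality argument identifying $\deg\phi$ with $\rank M$.
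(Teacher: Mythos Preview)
The paper does not prove Theorem~\ref{refSarason} at all: it states the result and immediately attributes it to the literature, citing \cite[Theorem~2.5]{bol2011} and \cite[Theorem~5.2]{ChH} (and Sarason~\cite{Sar} for the purely boundary case). There is no argument to compare against; the theorem is quoted as a known fact that underpins the subsequent augmentation/parametrization machinery.

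Your self-contained sketch therefore goes well beyond what the paper offers, and it follows the standard de~Branges--Rovnyak/lurking-isometry route that one finds in the references cited. The necessity computation via Gram matrices of (boundary) kernels in~$\h(\phi)$ and the decomposition $M=M'+\Delta$ are correct and give both $M\ge0$ and the minimal-positivity characterisation in~(3) cleanly. The uniqueness argument in~(2) via a nonzero $c\in\ker M$ is sound: the identity $\sum_j c_j(1-\bar\eta_j\psi(z))/(1-\bar\sigma_j z)\equiv0$ determines $\psi$ because the vanishing of the ``denominator'' $\sum_j c_j\bar\eta_j/(1-\bar\sigma_j z)$ would force the ``numerator'' $\sum_j c_j/(1-\bar\sigma_j z)$ to vanish too, hence $c=0$ by linear independence of the partial fractions. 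The one place I would ask you to tighten is the claim that the colligation produces a Blaschke product of degree \emph{exactly}~$r$: asserting ``minimality of the realisation, $A$ being a compression of multiplication by the distinct scalars $\sigma_j$'' is the right idea, but you should spell out observability and controllability (or, equivalently, argue directly that a solution of degree $r'<r$ would force $\rank M\le r'$ via the Gram-matrix computation, contradicting $\rank M=r$ --- for this you need that when $\deg\phi=r'$ the diagonal defect $\Delta$ vanishes, which in turn uses the uniqueness you have already established and a count of constraints). As you anticipate, the genuine analytic content is the Julia--Carath\'eodory package for boundary kernels in~$\h(\phi)$; for rational~$\phi$ this is elementary, but for general Schur~$\psi$ in the uniqueness step you are implicitly invoking Sarason's boundary theory, which is exactly what the cited references supply.
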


Consider a point $\tau \in \t$ distinct from $ \sigma_1,\dots, \sigma_k$. For each $\zeta \in \t$ we seek a solution $\phi$ to Problem {\rm \ref{prob}} that satisfies the additional interpolation condition $\phi(\tau)=\zeta$ and $A \phi(\tau) = \rho_{\zeta, \tau}$, where $\rho_{\zeta, \tau} >0$ is chosen to make the Pick matrix $B_{\zeta, \tau}$ of the augmented interpolation problem singular. We record the following simple lemma without proof.

\begin{lemma}\label{lem6.10}
If $C$ is an $n \times n$ positive definite matrix, $u$ is an $n \times 1$ column, $\rho=\langle C^{-1} u, u \rangle$ and the $(n+1) \times (n+1)$ matrix $B$ is defined by
$$B = \begin{bmatrix}C&u\\ u* &\rho\end{bmatrix},$$
then $B$ is positive semi-definite, $\rank(B) = n$ and
$$
B\begin{bmatrix}-C^{-1}u\\ 1\end{bmatrix} = 0.
$$
\end{lemma}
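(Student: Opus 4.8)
The plan is to exhibit the classical Schur-complement factorization of $B$ and read off all three conclusions from it; because the relevant Schur complement $\rho-u^*C^{-1}u$ vanishes by the choice of $\rho$, the factorization is especially clean. First I would check the kernel assertion by a direct block multiplication. Writing the inner product on $\c^n$ as $\langle x,y\rangle=y^*x$, so that $\rho=\langle C^{-1}u,u\rangle=u^*C^{-1}u$, we get
\[
B\begin{bmatrix}-C^{-1}u\\ 1\end{bmatrix}
=\begin{bmatrix}-CC^{-1}u+u\\ -u^*C^{-1}u+\rho\end{bmatrix}
=\begin{bmatrix}0\\ \rho-\langle C^{-1}u,u\rangle\end{bmatrix}=0 .
\]
In particular the $(n+1)\times 1$ vector $\begin{bmatrix}-C^{-1}u\\ 1\end{bmatrix}$ is a nonzero element of $\ker B$, so $\rank B\le n$.

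Next I would introduce the unitriangular matrix and the block-diagonal matrix
\[
L=\begin{bmatrix}I_n&0\\ u^*C^{-1}&1\end{bmatrix},\qquad
D=\begin{bmatrix}C&0\\ 0&\rho-u^*C^{-1}u\end{bmatrix}=\begin{bmatrix}C&0\\ 0&0\end{bmatrix},
\]
and verify by block multiplication that $B=LDL^*$; this is the only computation in the argument, and it uses nothing beyond $\rho=u^*C^{-1}u$ and $C=C^*$. Since $L$ is lower triangular with $1$'s on the diagonal it is invertible, so $\rank B=\rank D=\rank C=n$ because $C$ is positive definite; combined with the bound from the kernel computation this gives $\rank B=n$. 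Finally, $D\ge 0$ since its only nonzero block is the positive definite matrix $C$, and congruence preserves positive semi-definiteness (as $x^*LDL^*x=(L^*x)^*D(L^*x)\ge 0$), so $B=LDL^*\ge 0$.

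There is essentially no obstacle here, which is why the authors record the lemma without proof. The only point requiring a moment's care is the inner-product convention, which must be taken so that $\langle C^{-1}u,u\rangle$ equals $u^*C^{-1}u$ rather than its complex conjugate; since $C$, and hence $C^{-1}$, is Hermitian, $u^*C^{-1}u$ is in any case real, so even this ambiguity is harmless.
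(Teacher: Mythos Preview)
Your proof is correct: the direct block multiplication verifies the kernel claim, and the Schur-complement factorization $B=LDL^*$ with $D=\begin{bmatrix}C&0\\0&0\end{bmatrix}$ immediately yields both $B\ge 0$ and $\rank B=n$. The paper itself records this lemma without proof, so there is nothing to compare; your remark about the inner-product convention is well taken, and as you note the Hermiticity of $C^{-1}$ makes $u^*C^{-1}u$ real and the potential ambiguity harmless.
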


The Pick matrix $B_{\zeta, \tau}$ of the augmented problem is the $(n+1) \times (n+1)$ matrix,
\be\label{eq6.10}
B_{\zeta, \tau} = \begin{bmatrix}M&u_{\zeta, \tau}\\ u_{\zeta, \tau}^* &\rho_{\zeta, \tau}\end{bmatrix}
\ee
where $M$ is the Pick matrix associated with Problem {\rm \ref{prob}}, $u_{\zeta, \tau}$ is the $n \times 1$ column matrix defined by
\be\label{u-zeta-tau}
u_{\zeta, \tau} = \begin{bmatrix}\frac{1-\overline{\eta_1}\zeta}{1-\overline{\sigma_1}\tau}\\ \vdots\\ \frac{1-\overline{\eta_n}\zeta}{1-\overline{\sigma_n}\tau}\end{bmatrix}
\ee
and
$$
\rho_{\zeta, \tau} = \langle M^{-1} u_{\zeta, \tau}, u_{\zeta, \tau} \rangle.
$$

Thus the {\em augmented problem} that we are considering is the  Blaschke interpolation problem with data $(\tilde \si, \tilde\eta,\tilde\rho)$ where
\[
\tilde\si=(\si,\tau), \quad \tilde\eta = (\eta,\zeta), \quad \tilde\rho= (\rho, \rho_{\zeta, \tau}).
\]
\begin{prop}\label{rankM}
Let $\psi$ be a Blaschke product of degree $N$. Let 
 $\sigma = (\sigma_1, \sigma_2, \ldots, \sigma_n)$ be an $n$-tuple of distinct points in $\d^-$, let
$ \eta_j= \psi(\sigma_j)$ for $j=1,\dots, n$ and let
 $\rho_j = A\psi(\sigma_j)$ for $j$ such that $|\sigma_j|=1$.
The Pick matrix for the data $(\si, \eta,\rho)$ has rank at most
$N$.
\end{prop}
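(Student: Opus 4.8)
The plan is to write the Pick matrix $M=[m_{ij}]$ explicitly as $V^*V$ for an $N\times n$ matrix $V$; then $\rank M\le\rank V\le N$ at once. The natural source of $V$ is the model space $K_\psi=H^2\ominus\psi H^2$: it has dimension $\deg\psi=N$, its reproducing kernel is $k_w(z)=\dfrac{1-\overline{\psi(w)}\psi(z)}{1-\bar w z}$, and $M$ is the Gram matrix of $\{k_{\sigma_j}\}_{j=1}^n\subset K_\psi$. To keep everything within the elementary rational arithmetic used elsewhere in the paper I would, however, make the columns of $V$ explicit and avoid Hilbert-space machinery. (The case $N=0$, where $\psi$ is a unimodular constant and $M=0$, is trivial, so assume $N\ge1$.)

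First I would fix a factorisation $\psi=c\prod_{l=1}^N B_{\al_l}$ with $c\in\T$ and $\al_1,\dots,\al_N\in\D$ (repetitions allowed), and set
\[
e_l(z)=\frac{\sqrt{1-|\al_l|^2}}{1-\overline{\al_l}z}\prod_{m=1}^{l-1}B_{\al_m}(z)\qquad(1\le l\le N),
\]
each $e_l$ being a rational function holomorphic on a neighbourhood of $\D^-$ whose poles lie among the points $1/\overline{\al_m}$ outside $\D^-$. The key step would then be to establish the identity
\be\label{proposalCD}
\frac{1-\overline{\psi(w)}\psi(z)}{1-\bar w z}=\sum_{l=1}^N\overline{e_l(w)}\,e_l(z)
\ee
for $z,w$ in that neighbourhood. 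Since $|c|=1$ the constant drops out of the left-hand side, and \eqref{proposalCD} follows by induction on $N$: the base case is the elementary identity $1-\overline{B_{\al}(w)}B_{\al}(z)=\dfrac{(1-|\al|^2)(1-\bar w z)}{(1-\al\bar w)(1-\overline{\al}z)}$; for the inductive step one writes $\psi=\psi_{N-1}B_{\al_N}$ with $\psi_{N-1}=\prod_{l<N}B_{\al_l}$, uses $1-\overline{\psi(w)}\psi(z)=\bigl(1-\overline{\psi_{N-1}(w)}\psi_{N-1}(z)\bigr)+\overline{\psi_{N-1}(w)}\psi_{N-1}(z)\bigl(1-\overline{B_{\al_N}(w)}B_{\al_N}(z)\bigr)$, divides by $1-\bar w z$, and applies the inductive hypothesis and the base case.

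Next I would put $V=[e_l(\sigma_j)]_{l=1,\dots,N;\,j=1,\dots,n}$ and check that $M=V^*V$. For $i\ne j$, and for $i=j$ with $\sigma_i\in\D$, evaluating \eqref{proposalCD} at $(w,z)=(\sigma_i,\sigma_j)$ gives $(V^*V)_{ij}=\sum_l\overline{e_l(\sigma_i)}e_l(\sigma_j)=\dfrac{1-\overline{\eta_i}\eta_j}{1-\overline{\sigma_i}\sigma_j}=m_{ij}$, using $\eta_j=\psi(\sigma_j)$. The entry requiring care---and the step I expect to be the main obstacle, though it is not deep---is the diagonal entry at a node $\sigma_i\in\T$. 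There $(V^*V)_{ii}=\sum_l|e_l(\sigma_i)|^2$ is the limit as $z\to\sigma_i$ of the right-hand side of \eqref{proposalCD}, hence of the left-hand side; the latter is a $0/0$ indeterminate form because $|\psi(\sigma_i)|=|\sigma_i|=1$, but $\psi$ is holomorphic across $\T$, so l'H\^opital's rule evaluates the limit as $\dfrac{\overline{\psi(\sigma_i)}\psi'(\sigma_i)}{\overline{\sigma_i}}=\sigma_i\dfrac{\psi'(\sigma_i)}{\psi(\sigma_i)}$, which by Proposition \ref{elphasar}(ii) equals $A\psi(\sigma_i)=\rho_i=m_{ii}$ (and is positive, consistently with Proposition \ref{elphasar}(iv) and the requirement $\rho_i>0$).

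Once $M=V^*V$ is established, $\rank M=\rank(V^*V)\le\rank V\le N$, which is the assertion. One point of bookkeeping: the nodes in the statement are not ordered with the circle nodes first as in Definition \ref{blaschkedata}, but reordering them conjugates $M$ by a permutation matrix and leaves the rank unchanged, so there is no loss of generality above.
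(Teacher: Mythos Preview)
Your proof is correct. The approach differs from the paper's in a useful way, so a brief comparison is in order.

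The paper argues in two stages: first, for nodes lying entirely in $\d$, it invokes the standard fact that the Pick matrix is the Gram matrix $\bigl[\ip{(1-T_\psi T_\psi^*)k_{\si_j}}{k_{\si_i}}\bigr]$ on $H^2$, where $1-T_\psi T_\psi^*$ has rank $N$; second, for mixed interior--boundary nodes, it pushes the boundary nodes radially inward to $r\si_j$, applies the first case, and lets $r\to 1^-$, using L'H\^opital to recover the diagonal entries $\rho_i=A\psi(\si_i)$.

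You instead produce an explicit factorisation $M=V^*V$ with $V$ built from the Takenaka--Malmquist basis of $K_\psi$, handling interior and boundary nodes uniformly. The only residual special case is the diagonal entry at a boundary node, where you read off the value directly via L'H\^opital applied to the identity \eqref{proposalCD} with $w=\si_i$ fixed and $z\to\si_i$. This is more elementary---no Toeplitz operators, no limiting family $M(r)$---and gives the factorisation concretely, at the modest cost of writing out the telescoping induction for the kernel identity. The paper's route is shorter to state for readers who already know the model-space fact, but yours is self-contained and arguably better suited to the explicit, rational-arithmetic spirit of the rest of the paper.
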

\begin{proof}
In the case that the $\sigma_j$ all lie in $\d$ the assertion is well known -- see \cite{AgMcC}.  It follows easily from the fact that in this case the Pick matrix $M$ is given by
\[
M=\bbm \ip{(1-T_\psi T_\psi^*)k_{\la_j}}{k_{\la_i}} \ebm_{i,j=1}^n,
\]
where $k_\la$ denotes the Szeg\H o kernel and $T_\psi$ is the analytic Toeplitz operator on the Hardy space $H^2$ with symbol $\psi$.

Consider the case that $\si_1,\ldots,\si_k\in\t$ and $\si_{k+1},\ldots,\si_n \in\d$. Let $M$ be the Pick matrix for the data $(\si,\eta,\rho)$. Choose $r\in(0,1)$ and let $\la_j=r\si_j$ for $j=1,\ldots,n$.  By the foregoing observation, the matrix
\[
M(r)\df   \bbm \displaystyle\frac{1-\overline{\psi(r\si_i)}\psi(r\si_j)}{1-r^2\overline{\si_i}\si_j} \ebm_{i,j=1}^n
\]
has rank at most $N$. Let $r\to 1-$.  It follows from L'H\^opital's rule that the $j$th diagonal entry, for $j=1,\ldots,k$, tends to $A\psi(\si_j)$.  The remaining entries of $M(r)$ also tend to the corresponding entries of $M$, and so $M(r) \to M$.  It follows that $\rank(M)\leq N$. \qed 

\end{proof}

\begin{prop}\label{atmost1}
If the Pick matrix $M$ associated with Problem {\rm \ref{prob}} is positive definite then, for any $\tau\in \t\setminus \{\si_1,\dots,\si_k\}$ and  $\zeta \in \t$ there is at most one solution $\ph$ of Problem {\rm \ref{prob}} for which $\ph(\tau)=\zeta$.
\end{prop}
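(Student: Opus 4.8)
The plan is to show that any solution $\phi$ of Problem \ref{prob} with $\phi(\tau)=\zeta$ is automatically a solution of the augmented Blaschke interpolation problem with data $(\tilde\sigma,\tilde\eta,\tilde\rho)$, and then to invoke the uniqueness assertion of Theorem \ref{refSarason}.

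First I would note that, $\phi$ being a finite Blaschke product, the phasar derivative $\rho':=A\phi(\tau)$ exists and is positive by Proposition \ref{elphasar}. The $(n+1)$-tuple $(\sigma_1,\dots,\sigma_n,\tau)$ consists of distinct points, since $\tau\in\t$ while $\sigma_{k+1},\dots,\sigma_n\in\d$ and $\tau\notin\{\sigma_1,\dots,\sigma_k\}$; and $\phi$ interpolates the values $\phi(\sigma_j)=\eta_j$, $\phi(\tau)=\zeta$ with phasar derivatives $A\phi(\sigma_j)=\rho_j$ ($1\le j\le k$) and $A\phi(\tau)=\rho'$. The associated Pick matrix (reindexing so that the boundary nodes $\sigma_1,\dots,\sigma_k,\tau$ are listed first changes nothing) is
\[
B' \;=\; \begin{bmatrix}M&u_{\zeta,\tau}\\ u_{\zeta,\tau}^*&\rho'\end{bmatrix}.
\]
Applying Proposition \ref{rankM} with $\psi=\phi$ and $N=n$ gives $\rank B'\le n$, while part (1) of Theorem \ref{refSarason}, with $\phi$ itself as the Schur-class witness (the phasar conditions $A\phi(\tilde\sigma_j)\le\tilde\rho_j$ holding with equality), gives $B'\ge 0$.

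Next I would combine these two facts. The leading $n\times n$ principal submatrix of $B'$ is $M$, which is positive definite by hypothesis, so $\rank B'\ge n$; hence $\rank B'=n$. For a positive semi-definite block matrix with invertible leading block, rank equal to the size of that block forces the Schur complement to vanish, i.e. $\rho'=\langle M^{-1}u_{\zeta,\tau},u_{\zeta,\tau}\rangle=\rho_{\zeta,\tau}$. Thus $B'=B_{\zeta,\tau}$, and $\phi$ is a Schur-class function satisfying conditions \eqref{eq6.2_2} and \eqref{eq6.4_2a} for the augmented data $(\tilde\sigma,\tilde\eta,\tilde\rho)$. By Lemma \ref{lem6.10}, $B_{\zeta,\tau}$ is positive semi-definite of rank $n<n+1$, so part (2) of Theorem \ref{refSarason} supplies a \emph{unique} such Schur-class function. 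Any two solutions of Problem \ref{prob} taking the value $\zeta$ at $\tau$ must therefore both equal this one function, which proves the proposition.

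The step I expect to be the crux is the combination of the rank bound from Proposition \ref{rankM} with the positivity from Theorem \ref{refSarason}(1): the real content is that, once $M$ is positive definite, prescribing the single extra boundary value $\phi(\tau)=\zeta$ already pins down the boundary phasar derivative $A\phi(\tau)$ to the predetermined number $\rho_{\zeta,\tau}$. After that, uniqueness is immediate from the Sarason-type theorem, and the remaining work — checking distinctness of the enlarged node set and that the enlarged Pick matrix really is the block matrix displayed above — is routine.
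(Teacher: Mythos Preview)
Your proof is correct and follows essentially the same approach as the paper's: both use Proposition \ref{rankM} to force the augmented Pick matrix to be singular, deduce that $A\phi(\tau)$ must equal the predetermined value $\rho_{\zeta,\tau}$, and then invoke Theorem \ref{refSarason}(2) for uniqueness. If anything, you are slightly more careful than the paper in explicitly justifying $B'\ge 0$ via Theorem \ref{refSarason}(1) before applying the Schur-complement argument; the paper passes directly from singularity of $\tilde M$ (with invertible leading block $M$) to $\rho_\tau=\langle M^{-1}u_{\zeta,\tau},u_{\zeta,\tau}\rangle$ via the determinant formula.
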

\begin{proof}
Let $\psi$ be a solution of Problem \ref{prob} such that $\psi(\tau)=\zeta$ and let $\rho_\tau=A\psi(\tau)$.
Thus $\psi$ is in the Schur class and satisfies
\begin{align}\label{psicond}
\psi(\si_j)&= \eta_j\quad \mbox{ for } j=1,\dots,n, \notag\\
A\psi(\si_j) &= \rho_j \quad \mbox{ for }j=1,\dots,k,\notag\\
\psi(\tau)&=\zeta, \notag\\
A\psi(\tau)&= \rho_{\tau}.
\end{align}
Since $\psi$ is a Blaschke product of degree $n$, 
it follows from Proposition \ref{rankM}, applied to the augmented problem with data $(\tilde\si,\tilde\eta,(\rho,\rho_\tau))$, that
the corresponding Pick matrix
\[
\tilde M = \bbm M & u_{\zeta,\tau} \\ u_{\zeta,\tau}^* & \rho_\tau \ebm
\]
has rank less or equal to $ n$ and so it is singular.  Thus
\[
\rho_\tau= \langle M^{-1} u_{\zeta, \tau}, u_{\zeta, \tau} \rangle=\rho_{\zeta, \tau},
\]
 and so $A\psi(\tau)$ is the same for every solution of Problem \ref{prob} such that $\psi(\tau)=\zeta$.
By Theorem \ref{refSarason},
there is a {\em unique} function $\psi$ in the Schur class satisfying the conditions \eqref{psicond}, and hence there is at most one solution of Problem \ref{prob} such that $\psi(\tau)=\zeta$. \qed
\end{proof}

We denote by $e_j$ the $j$th standard basis vector in $\C^n$.
\begin{proposition}\label{prop6.10}
If the Pick matrix $M$ associated with Problem {\rm \ref{prob}} is positive definite, if $\tau\in\t\setminus\{\si_1,\dots,\si_k\}, \, \zeta \in \t$ and
\be\label{eq6.20}
\langle M^{-1}u_{\zeta, \tau} , e_j \rangle \ne 0
\ee
for $j=1,\ldots,k$, then there exists a unique solution $\phi$ to Problem {\rm \ref{prob}} such that $\phi(\tau)=\zeta$.
\end{proposition}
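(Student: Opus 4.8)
The plan is to use the augmented Blaschke interpolation problem with data $(\tilde\si,\tilde\eta,\tilde\rho)$ introduced just before the statement, where $\tilde\si=(\si,\tau)$, $\tilde\eta=(\eta,\zeta)$ and $\tilde\rho=(\rho,\rho_{\zeta,\tau})$ with $\rho_{\zeta,\tau}=\langle M\inv u_{\zeta,\tau},u_{\zeta,\tau}\rangle$, and to show that this augmented problem is solvable by a Blaschke product of degree exactly $n$. By Proposition \ref{atmost1} any such solution is the unique solution of Problem \ref{prob} with $\ph(\tau)=\zeta$, which gives existence; uniqueness is already in hand from Proposition \ref{atmost1}. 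So the whole content is existence for the augmented problem.

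First I would observe, via Lemma \ref{lem6.10} applied with $C=M$ and $u=u_{\zeta,\tau}$, that the augmented Pick matrix $B_{\zeta,\tau}$ in \eqref{eq6.10} is positive semi-definite of rank exactly $n$, with null vector $\bigl[\begin{smallmatrix}-M\inv u_{\zeta,\tau}\\ 1\end{smallmatrix}\bigr]$. Then I would invoke Theorem \ref{refSarason}(2): since $B_{\zeta,\tau}\ge 0$ has rank $r=n<n+1$, there is a \emph{unique} Schur-class function $\phi$ satisfying the interpolation conditions \eqref{eq6.2_2}--\eqref{eq6.4_2a} for the augmented data, and this $\phi$ is a Blaschke product of degree $n$. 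It then satisfies $\phi(\si_j)=\eta_j$ for $j=1,\dots,n$, $\phi(\tau)=\zeta$, and $A\phi(\si_j)\le\rho_j$, $A\phi(\tau)\le\rho_{\zeta,\tau}$; I must promote these phasar inequalities to equalities at $\si_1,\dots,\si_k$ to conclude $\phi$ actually solves Problem \ref{prob}.

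The equality at the boundary nodes is where the hypothesis \eqref{eq6.20} enters, and this is the main obstacle. By Theorem \ref{refSarason}(3), $\phi$ satisfies \eqref{eq6.4_2} for the augmented data (i.e.\ $A\phi=\rho$ at all $k+1$ boundary nodes, including $\tau$) if and only if $B_{\zeta,\tau}$ is \emph{minimally} positive, i.e.\ no nonzero positive diagonal $D$ has $B_{\zeta,\tau}\ge D$. Suppose $B_{\zeta,\tau}\ge D$ for some positive diagonal $D=\diag(d_1,\dots,d_{k+1},0,\dots,0)$. Pair the inequality $B_{\zeta,\tau}-D\ge 0$ with the null vector $v=\bigl[\begin{smallmatrix}-M\inv u_{\zeta,\tau}\\ 1\end{smallmatrix}\bigr]$ of $B_{\zeta,\tau}$: we get $0\le\langle (B_{\zeta,\tau}-D)v,v\rangle=-\langle Dv,v\rangle=-\sum_{j=1}^{k}d_j|\langle M\inv u_{\zeta,\tau},e_j\rangle|^2-d_{k+1}$. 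Since every term on the right is $\le 0$, each $d_j$ with $1\le j\le k$ must satisfy $d_j|\langle M\inv u_{\zeta,\tau},e_j\rangle|^2=0$, and $d_{k+1}=0$. By the hypothesis \eqref{eq6.20}, $\langle M\inv u_{\zeta,\tau},e_j\rangle\ne 0$ for $j=1,\dots,k$, forcing $d_j=0$ there as well; hence $D=0$. Thus $B_{\zeta,\tau}$ is minimally positive, Theorem \ref{refSarason}(3) applies, and $\phi$ satisfies $A\phi(\si_j)=\rho_j$ for $j=1,\dots,k$. Therefore $\phi$ is a Blaschke product of degree $n$ solving Problem \ref{prob} with $\phi(\tau)=\zeta$, and by Proposition \ref{atmost1} it is the unique one. $\qed$
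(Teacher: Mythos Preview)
Your argument is correct in substance and follows essentially the same route as the paper: use Lemma \ref{lem6.10} to see that $B_{\zeta,\tau}\ge 0$ has rank $n$ with null vector $v=\bigl[\begin{smallmatrix}-M\inv u_{\zeta,\tau}\\ 1\end{smallmatrix}\bigr]$, test diagonal perturbations against $v$ to establish minimal positivity, and then invoke Theorem \ref{refSarason}. The paper checks the rank-one perturbations $B_{\zeta,\tau}-\epsilon\bigl[\begin{smallmatrix}e_j\\0\end{smallmatrix}\bigr]\bigl[\begin{smallmatrix}e_j\\0\end{smallmatrix}\bigr]^*$ one at a time, whereas you handle a general diagonal $D$ in one stroke via $\langle Dv,v\rangle=0$; the content is the same.

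One small slip to fix: in the augmented data $(\tilde\si,\tilde\eta,\tilde\rho)$ the additional boundary node $\tau$ occupies position $n+1$ of $B_{\zeta,\tau}$ (see \eqref{eq6.10}), not position $k+1$. Hence the relevant diagonal $D$ is supported on positions $1,\dots,k$ and $n+1$, and your displayed sum should read
\[
-\sum_{j=1}^{k}d_j\,|\langle M\inv u_{\zeta,\tau},e_j\rangle|^2-d_{n+1}\cdot|1|^2,
\]
from which $d_{n+1}=0$ and (by \eqref{eq6.20}) $d_1=\cdots=d_k=0$. With this indexing correction the proof goes through unchanged.
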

\begin{proof}
Observe that by Lemma \ref{lem6.10}, if $B_{\zeta, \tau}$ is defined by  equation \eqref{eq6.10}, then $B_{\zeta, \tau} \ge 0$ and $\rank(B_{\zeta, \tau})=n$. Further, Lemma \ref{lem6.10} guarantees that $\ker(B_{\zeta, \tau})$ is spanned by the vector $\begin{bmatrix}-M^{-1}u_{\zeta, \tau}\\ 1\end{bmatrix}$. The inequation \eqref{eq6.20} implies, for $j=1,\ldots,k$,  that $\begin{bmatrix}-M^{-1}u_{\zeta, \tau}\\ 1\end{bmatrix} \not\perp \begin{bmatrix}e_j\\ 0\end{bmatrix}$ and therefore, for every $\epsilon > 0$, we have
$$
\left\langle \left(B_{\zeta, \tau} - \epsilon\begin{bmatrix}e_j\\ 0\end{bmatrix} \otimes \begin{bmatrix}e_j\\ 0\end{bmatrix} \right)
\begin{bmatrix}-M^{-1}u_{\zeta, \tau}\\ 1\end{bmatrix}, \begin{bmatrix}-M^{-1}u_{\zeta, \tau}\\ 1\end{bmatrix} \right\rangle =
$$
$$
- \epsilon \left| \left\langle   \begin{bmatrix}-M^{-1}u_{\zeta, \tau}\\ 1\end{bmatrix} , \begin{bmatrix}e_j\\ 0\end{bmatrix} \right\rangle \right|^2 < 0.
$$
Thus
$$
B_{\zeta, \tau} - \epsilon\begin{bmatrix}e_j\\ 0\end{bmatrix} \otimes \begin{bmatrix}e_j\\ 0\end{bmatrix} \not\ge 0.
$$
It follows that $B_{\zeta, \tau}$ is  minimally positive and the proposition follows from Theorem \ref{refSarason}.
 \qed \end{proof}
In the light of Proposition \ref{prop6.10} we define the exceptional set $Z_\tau $  for Problem \ref{prob} to be 
\be\label{defZtau}
Z_\tau=\{\zeta \in \t: \;\text{ for some} \; j, \; 1 \le j \le k, \; \langle M^{-1}u_{\zeta, \tau} , e_j \rangle = 0\}.
\ee
Define $n \times 1 $ vectors $x_\lambda$ and $y_\lambda$
for $\lambda \in \d^- \setminus \{ \sigma_1, \dots, \sigma_k\}$ by the formulas
\be\label{x-y-lambda}
x_\lambda =\begin{bmatrix}\frac{1}{1-\overline{\sigma_1}\lambda}\\ \vdots\\ \frac{1}{1-\overline{\sigma_n}\lambda}\end{bmatrix}, \qquad y_\lambda =\begin{bmatrix}\frac{\overline{\eta_1}}{1-\overline{\sigma_1}\lambda}\\ \vdots\\ \frac{\overline{\eta_n}}{1-\overline{\sigma_n}\lambda}\end{bmatrix},
\ee
so that
\be\label{utauxtau}
u_{\zeta, \tau} = x_\tau - \zeta y_\tau .
\ee

\begin{proposition}\label{Ztau} {\rm (i)} For any $\tau\in \t \setminus \{\sigma_1, \dots, \sigma_k \}$ 
if
$$
\langle x_\tau , M^{-1}e_j \rangle =0 = \langle y_\tau ,M^{-1} e_j \rangle\; \text{ for some } \; j,\; 1\le j \le k, 
$$ 
then  $Z_\tau = \t$.

{\rm (ii)} There exist uncountably many $\tau\in \t \setminus \{\sigma_1, \dots, \sigma_k \}$ such that the equation
$$
\langle x_\tau, M^{-1}e_j \rangle = 0= \langle y_\tau , M^{-1}e_j \rangle 
$$
does not hold  for any $j$, $1\le j \le k$.
Moreover, for such $\tau$, the set $Z_\tau$ consists of at most $k$ points.
\end{proposition}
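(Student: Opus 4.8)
The plan is to analyze the two defining conditions $\langle x_\tau, M^{-1}e_j\rangle$ and $\langle y_\tau, M^{-1}e_j\rangle$ as functions of $\tau$, using the explicit formulas \eqref{x-y-lambda}. First I would observe that since $M$ is positive definite, $M^{-1}e_j$ is a fixed nonzero vector; write its entries as $(c_1^{(j)},\dots,c_n^{(j)})$. Then $\langle x_\tau, M^{-1}e_j\rangle = \sum_{i=1}^n \overline{c_i^{(j)}}\,(1-\overline{\sigma_i}\tau)^{-1}$ and $\langle y_\tau, M^{-1}e_j\rangle = \sum_{i=1}^n \overline{c_i^{(j)}}\,\eta_i\,(1-\overline{\sigma_i}\tau)^{-1}$. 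Each of these is a rational function of $\tau$ that is not identically zero (its partial-fraction expansion has nonzero residues $\overline{c_i^{(j)}}$, respectively $\overline{c_i^{(j)}}\eta_i$, at the poles $\tau = 1/\overline{\sigma_i}$), hence each has only finitely many zeros on $\t$. Consequently, for all but finitely many $\tau\in\t$ (and certainly for uncountably many, avoiding also $\sigma_1,\dots,\sigma_k$), neither $\langle x_\tau, M^{-1}e_j\rangle$ nor $\langle y_\tau, M^{-1}e_j\rangle$ vanishes, for any $j=1,\dots,k$. This is the ``moreover'' case, and a fortiori the pair cannot vanish simultaneously, giving part (ii)'s first assertion.

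For part (i): suppose $\langle x_\tau, M^{-1}e_j\rangle = 0 = \langle y_\tau, M^{-1}e_j\rangle$ for some fixed $j$. By \eqref{utauxtau}, $u_{\zeta,\tau} = x_\tau - \zeta y_\tau$, so
\[
\langle M^{-1}u_{\zeta,\tau}, e_j\rangle = \langle u_{\zeta,\tau}, M^{-1}e_j\rangle = \langle x_\tau, M^{-1}e_j\rangle - \bar\zeta\langle y_\tau, M^{-1}e_j\rangle = 0
\]
for \emph{every} $\zeta\in\t$ (here I use that $M^{-1}$ is self-adjoint). Hence every $\zeta\in\t$ lies in $Z_\tau$ by the definition \eqref{defZtau}, i.e.\ $Z_\tau=\t$.

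For the last assertion of part (ii): fix a $\tau$ in the uncountable good set, so that for each $j=1,\dots,k$ at least one of $\langle x_\tau, M^{-1}e_j\rangle$, $\langle y_\tau, M^{-1}e_j\rangle$ is nonzero. For such $j$, the equation $\langle M^{-1}u_{\zeta,\tau}, e_j\rangle = \langle x_\tau, M^{-1}e_j\rangle - \bar\zeta\langle y_\tau, M^{-1}e_j\rangle = 0$ is a nontrivial linear equation in $\bar\zeta$, so it is satisfied by at most one $\zeta\in\t$ (it has exactly one solution $\bar\zeta = \langle x_\tau, M^{-1}e_j\rangle / \langle y_\tau, M^{-1}e_j\rangle$ when $\langle y_\tau, M^{-1}e_j\rangle\neq0$, and no solution when $\langle y_\tau, M^{-1}e_j\rangle = 0$ but $\langle x_\tau, M^{-1}e_j\rangle\neq 0$). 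Taking the union over $j=1,\dots,k$ shows $Z_\tau$ has at most $k$ points.

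The only step requiring genuine care is the nonvanishing claim: that $\tau\mapsto\langle x_\tau, M^{-1}e_j\rangle$ is not identically zero on $\t$. This follows because $M^{-1}e_j\neq 0$ (as $M$ is invertible) and the functions $\tau\mapsto (1-\overline{\sigma_i}\tau)^{-1}$, $i=1,\dots,n$, are linearly independent (distinct $\sigma_i$, hence distinct poles), so a nonzero linear combination cannot vanish identically; the same argument handles $y_\tau$ since the scalars $\eta_i$ are all nonzero ($\eta_i\in\t$ or $\eta_i\in\d$, but in the relevant boundary cases $\eta_i\in\t$, and for the interior ones a zero $\eta_i$ would merely drop a term without affecting linear independence of the remaining terms). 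I would also need the elementary remark that $\t$ minus a finite set minus $\{\sigma_1,\dots,\sigma_k\}$ is still uncountable, which is immediate.
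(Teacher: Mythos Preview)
Your argument is correct and follows essentially the same route as the paper's: decompose $Z_\tau$ as a union of $k$ sets $Z_\tau^j$, use $u_{\zeta,\tau}=x_\tau-\zeta y_\tau$ to see that each $Z_\tau^j$ is either all of $\t$ (when both inner products vanish) or at most a singleton, and invoke linear independence of the functions $\tau\mapsto(1-\overline{\sigma_i}\tau)^{-1}$ together with $M^{-1}e_j\neq 0$ to rule out the former for generic $\tau$. Your organisation is slightly more direct than the paper's---you show outright that $\langle x_\tau,M^{-1}e_j\rangle$ is a nonzero rational function and hence vanishes for only finitely many $\tau$, whereas the paper argues by contradiction and pigeonhole to get uncountably many good $\tau$---and in fact your version yields the marginally stronger conclusion ``all but finitely many $\tau$''.

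Two small points. First, with the inner product linear in the first slot you should have $\zeta$ rather than $\bar\zeta$ in your displayed identity, and the entries of $y_\tau$ carry $\bar\eta_i$ rather than $\eta_i$; neither affects the argument since $|\zeta|=1$. Second, your claim that $\langle y_\tau,M^{-1}e_j\rangle$ is not identically zero is not quite justified: if every nonzero entry of $M^{-1}e_j$ happens to sit at an index $i$ with $\eta_i=0$ (possible for $i>k$), the sum collapses. Fortunately you never actually use this---your $x_\tau$ argument already shows that at least one of the two inner products is nonzero for cofinitely many $\tau$, and your case analysis for $|Z_\tau^j|\le 1$ explicitly covers the case $\langle y_\tau,M^{-1}e_j\rangle=0$, $\langle x_\tau,M^{-1}e_j\rangle\neq 0$.
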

\begin{proof} (i) 
Let 
$$
Z^j_\tau=\{\zeta \in \t: \;(x_\tau - \zeta y_\tau) \bot M^{-1} e_j\};
$$
By the definition \eqref{defZtau} and equation \eqref{utauxtau},
$$ 
Z_\tau=Z^1_\tau \cup \dots\cup Z^k_\tau.
$$
Note that
$ Z^j_\tau=\t$
if and only if, for every  $\zeta \in \t$,
$$\langle x_\tau - \zeta y_\tau,M^{-1} e_j \rangle = \langle x_\tau,M^{-1} e_j \rangle - \zeta \langle y_\tau,M^{-1} e_j \rangle =0.$$
Hence, for $\tau \in \t \setminus \{\sigma_1, \dots, \sigma_k \} $, the set 
$ Z^j_\tau=\t$ if and only if 
$$\langle x_\tau , M^{-1}e_j \rangle =0 = \langle y_\tau , M^{-1}e_j \rangle.
$$ 
Otherwise, $ Z^j_\tau$ consists of at most one point $\zeta^j_\tau \in \t$. \\

We shall call a point $\tau \in \t\setminus \{\sigma_1, \dots, \sigma_k \}$ {\em unsuitable} if there exists $j$, $1\le j \le k$, such that $ M^{-1} e_j \bot \{x_\tau, y_\tau \} $. \\

(ii) For $j$, $1\le j \le k$, let
$$
E_j = \{\tau \in \t\setminus \{\sigma_1, \dots, \sigma_k \}: \{x_\tau, y_\tau \} \bot M^{-1} e_j\}. 
$$
Suppose that  every $\tau \in \t \setminus \{\sigma_1, \dots, \sigma_k \}$
is unsuitable. Then
$$
\t\setminus \{\sigma_1, \dots, \sigma_k \} = E_1 \cup \dots \cup E_k.
$$ 
Pick $j_0$ such that $E_{j_0}$ is uncountable.
Thus, for every $\tau \in E_{j_0}$,
$$
\langle x_\tau, M^{-1}e_{j_0} \rangle = 0= \langle y_\tau , M^{-1}e_{j_0} \rangle .
$$
Let 
$$
M^{-1}e_{j_0}= \begin{bmatrix} c_1 \\ \vdots \\ c_n\end{bmatrix}.
$$
By equations \eqref{x-y-lambda},
$$
\langle x_\tau, M^{-1}e_{j_0} \rangle = \sum_{i=1}^n \frac{c_i}{1-\overline{\sigma_i}\tau} =0
$$
and
$$ 
\langle y_\tau , M^{-1}e_{j_0} \rangle  = \sum_{i=1}^n \frac{c_i \overline{\eta_i}}{1-\overline{\sigma_i}\tau} =0.
$$
Since the functions $ f_i(\lambda)=\frac{1}{1-\overline{\sigma_i}\lambda}, \; i =1, \dots, n, $ restricted to the infinite bounded set $ E_{j_0} \subset \C$ are linearly independent,
$c_i = 0$ for all $i$ and $M^{-1}e_{j_0}=0$. This is impossible. Therefore 
there exists $\tau\in \t \setminus \{\sigma_1, \dots, \sigma_k \}$ such that the equalities
$$
\langle x_\tau, M^{-1}e_j \rangle = 0= \langle y_\tau , M^{-1}e_j \rangle 
$$
do not hold  for any $j$, $1\le j \le k$. Hence there exists
$\tau\in \t \setminus \{\sigma_1, \dots, \sigma_k \}$ such that  the set $Z_\tau$ consists of at most $k$ points.
 \qed \end{proof}

Our final result concerning Problem {\rm \ref{prob}} is that the particular solution guaranteed by Proposition \ref{prop6.10} is uniquely determined by $\zeta$ and varies linear-fractionally in $\zeta$.  We suppose that  Blaschke interpolation data $(\si,\eta,\rho)$  are given, as in Definition \ref{blaschkedata}.
\begin{theorem}\label{thm6.10}
Let  the Pick matrix $M$ for  Problem {\rm \ref{prob}}
 be positive definite, and let $\tau \in \T\setminus\{\si_1,\dots,\si_k\}$ be such that the set
$$
Z_\tau=\{\zeta \in \t: u_{\zeta, \tau} \bot M^{-1} e_j \;\text{ for some} \; j, \; 1 \le j \le k\}
$$
contains at most $k$ points, where $u_{\zeta, \tau}$ is defined by equation \eqref{u-zeta-tau}.
\begin{enumerate}
\item If $\zeta \in \t \setminus Z_\tau$, then there is a unique solution $\phi_\zeta$ of Problem {\rm \ref{prob}} that satisfies  $\phi_\zeta(\tau)=\zeta$. 
\item There exist unique polynomials $a_\tau$, $b_\tau$, $c_\tau$, and $d_\tau$ of degree at most  $n$ such that
\be\label{eq6.30}
\begin{bmatrix}a_\tau(\tau)&b_\tau(\tau)\\ c_\tau(\tau)&d_\tau(\tau)\end{bmatrix} = \begin{bmatrix}1&0\\ 0&1\end{bmatrix}
\ee
and, for all $\zeta \in \t$, if $\phi$ is a solution of 
 Problem {\rm \ref{prob}} such that $\phi(\tau) = \zeta$, then
\be\label{eq6.40}
\phi(\lambda)=\frac{a_\tau(\lambda) \zeta + b_\tau(\lambda)}{c_\tau(\lambda) \zeta + d_\tau(\lambda)}
\ee
for all  $\lambda \in \d$.
\item If $\tilde a, \tilde b, \tilde c, \tilde d$ are rational functions satisfying the  equation 
\be\label{eq6.30.2}
\begin{bmatrix}\tilde a(\tau)&\tilde b(\tau)\\ \tilde c(\tau)&\tilde d(\tau)\end{bmatrix} = \begin{bmatrix}1&0\\ 0&1\end{bmatrix}
\ee
and such that for three distinct points  $\zeta$  in $\t \setminus Z_\tau$,  the equation 
\be\label{eq6.40.2}
\frac{a_\tau(\lambda) \zeta + b_\tau(\lambda)}{c_\tau(\lambda) \zeta + d_\tau(\lambda)}=\frac{\tilde a(\lambda) \zeta + \tilde b(\lambda)}{\tilde c(\lambda) \zeta + \tilde d(\lambda)}
\ee
holds for all $\lambda \in \d$, then there exists a rational function $X$ such that $\tilde a=Xa_\tau, \, \tilde b=Xb_\tau, \, \tilde c=Xc_\tau$ and $\tilde d=Xd_\tau$.
\end{enumerate}
\end{theorem}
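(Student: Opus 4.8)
The plan is to build the linear fractional parametrization directly from the solution-selecting map $\zeta \mapsto \phi_\zeta$ guaranteed by Proposition \ref{prop6.10} and part (1). First I would fix $n+1$ distinct parameter values $\zeta_0, \zeta_1, \ldots, \zeta_n$ in $\t \setminus Z_\tau$ (possible since $Z_\tau$ is finite), together with $\tau$ itself, and use the solutions $\phi_{\zeta_0}, \ldots, \phi_{\zeta_n}$, each a Blaschke product of degree at most $n$. For a fixed $\lambda \in \d$, the values $\phi_{\zeta_i}(\lambda)$ are $n+1$ points in $\d$; I want to show there is a M\"obius transformation $T_\lambda$ (depending on $\lambda$) with $T_\lambda(\zeta_i) = \phi_{\zeta_i}(\lambda)$ for all $i$ and $T_\lambda(\tau) = \phi_{\zeta_i}(\tau)$-consistency forcing the normalization. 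The key mechanism: write $\phi_\zeta = (a_\zeta \eta + \text{stuff})$ --- actually the cleanest route is to use the formula from Theorem \ref{refSarason} / the standard Sarason-type parametrization. Since each augmented Pick matrix $B_{\zeta,\tau}$ has a one-dimensional kernel spanned by $\bigl[\begin{smallmatrix} -M^{-1}u_{\zeta,\tau}\\ 1\end{smallmatrix}\bigr]$ and $u_{\zeta,\tau} = x_\tau - \zeta y_\tau$ depends affinely on $\zeta$, the reproducing-kernel argument that produces the interpolant from the kernel vector yields $\phi_\zeta(\lambda)$ as a ratio of two expressions each affine in $\zeta$ with $\lambda$-dependent coefficients. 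This gives \eqref{eq6.40} with $a_\tau, b_\tau, c_\tau, d_\tau$ rational in $\lambda$; clearing denominators and using that $p$, the relevant polynomials, have degree at most $n$ (the solutions are degree-$n$ Blaschke products) shows the coefficients can be taken polynomial of degree at most $n$.

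For part (2), the existence of $a_\tau, b_\tau, c_\tau, d_\tau$ then follows, and the normalization \eqref{eq6.30} is achieved by multiplying the quadruple by a common scalar rational function evaluated so that the matrix at $\tau$ is the identity: since every solution $\phi$ of Problem \ref{prob} satisfies $\phi(\tau) = \zeta$ for exactly the value of $\zeta$ that indexes it (by Proposition \ref{atmost1}), plugging $\lambda = \tau$ into \eqref{eq6.40} forces $\zeta = \frac{a_\tau(\tau)\zeta + b_\tau(\tau)}{c_\tau(\tau)\zeta + d_\tau(\tau)}$ for all $\zeta$ in a cofinite subset of $\t$, hence identically; this says the M\"obius map with matrix $\bigl[\begin{smallmatrix} a_\tau(\tau) & b_\tau(\tau)\\ c_\tau(\tau) & d_\tau(\tau)\end{smallmatrix}\bigr]$ fixes every point, so that matrix is a scalar multiple of the identity, and dividing through by that scalar gives \eqref{eq6.30}. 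Uniqueness of the normalized quadruple is then a consequence of part (3).

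For part (3): suppose $\tilde a, \tilde b, \tilde c, \tilde d$ are rational, satisfy \eqref{eq6.30.2}, and agree with the linear fractional map of $\zeta$ given by $a_\tau, b_\tau, c_\tau, d_\tau$ for three distinct $\zeta \in \t \setminus Z_\tau$, for all $\lambda \in \d$. Fix $\lambda \in \d$ (outside the finitely many poles of the rational functions). Two M\"obius transformations (or degenerate linear fractional maps given by $2\times 2$ matrices) that agree at three distinct points are equal as maps, hence the matrices $\bigl[\begin{smallmatrix} a_\tau(\lambda) & b_\tau(\lambda)\\ c_\tau(\lambda) & d_\tau(\lambda)\end{smallmatrix}\bigr]$ and $\bigl[\begin{smallmatrix} \tilde a(\lambda) & \tilde b(\lambda)\\ \tilde c(\lambda) & \tilde d(\lambda)\end{smallmatrix}\bigr]$ differ by a scalar factor $X(\lambda) \in \c \setminus \{0\}$, provided at least one of the two matrices is invertible at $\lambda$ --- I would check nonsingularity of the $a_\tau, b_\tau, c_\tau, d_\tau$ matrix on a dense set using that the map $\zeta \mapsto \phi_\zeta(\lambda)$ is nonconstant (distinct solutions give distinct functions, and by analyticity distinct values at generic $\lambda$). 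Then $X = \tilde a / a_\tau$ (wherever $a_\tau \neq 0$, else use another entry) is a ratio of rational functions, hence rational, and $\tilde a = X a_\tau$, etc., hold as rational function identities by analytic continuation; the normalizations \eqref{eq6.30} and \eqref{eq6.30.2} then force $X(\tau) = 1$. Uniqueness in part (2) follows by taking $\tilde a = a_\tau', \ldots$ to be a second normalized quadruple: part (3) gives $\tilde a = X a_\tau$ with $X$ rational and $X(\tau)=1$; but then the linear fractional maps coincide for \emph{all} $\zeta$, not just three, and one shows $X \equiv 1$ by comparing leading behavior or by noting the maps are identical so $X$ cancels.

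The main obstacle I anticipate is part (1) together with the degree bookkeeping in part (2): producing the linear fractional formula cleanly requires either invoking the explicit Sarason-type parametrization (which the paper says it wants to give a self-contained treatment of, so I'd need to actually derive the formula from the kernel vector $\bigl[\begin{smallmatrix} -M^{-1}u_{\zeta,\tau}\\ 1\end{smallmatrix}\bigr]$ of $B_{\zeta,\tau}$) and then verifying that the resulting numerator and denominator are genuinely polynomial of degree $\le n$ in $\lambda$ rather than merely rational --- this uses that the unique interpolant attached to a rank-$n$ positive Pick matrix is a Blaschke product of degree exactly $n$, so its numerator and denominator polynomials have degree $n$, and tracking how the $\zeta$-dependence sits inside those polynomials. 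The M\"obius-map-agreeing-at-three-points arguments in (3), and the fixing-every-point argument in (2), are routine linear algebra over $\c$ once the framework is set up.
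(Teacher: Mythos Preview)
Your proposal is correct and follows essentially the same approach as the paper. The paper derives the linear fractional formula exactly as you describe: it uses that $v_{\zeta,\lambda}$ (the augmented-problem analog of $u_{\zeta,\tau}$ at the point $\lambda$) must be orthogonal to the kernel vector $\bigl[\begin{smallmatrix}-M^{-1}u_{\zeta,\tau}\\ 1\end{smallmatrix}\bigr]$, expands this using $u_{\zeta,\tau}=x_\tau-\zeta y_\tau$, and solves the resulting equation (linear in $\phi(\lambda)$, affine in $\zeta$) to obtain explicit rational functions $A,B,C,D$ of $\lambda$; multiplying through by $\pi(\lambda)=(1-\bar\tau\lambda)\prod_j\frac{1-\bar\sigma_j\lambda}{1-\bar\sigma_j\tau}$ produces polynomials of degree at most $n$ and simultaneously achieves the normalization \eqref{eq6.30} by direct computation rather than by your evaluate-at-$\tau$ argument, but the two are equivalent.

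One small technical remark on part (3): your ``two M\"obius maps agreeing at three points'' argument needs nondegeneracy of the linear fractional maps at generic $\lambda$, which you flag as something to check. The paper sidesteps this by cross-multiplying \eqref{eq6.40.2} to obtain a quadratic in $\zeta$ holding at three distinct values, hence identically, and then reading off the three coefficient identities $\tilde a c_\tau = a_\tau \tilde c$, $\tilde a d_\tau + \tilde b c_\tau = a_\tau \tilde d + b_\tau \tilde c$, $\tilde b d_\tau = b_\tau \tilde d$; this is algebraically cleaner and avoids any nondegeneracy hypothesis. Your uniqueness argument for the normalized quadruple in (2) is also slightly less explicit than the paper's, which argues by degree: if a second normalized polynomial quadruple of degree $\le n$ existed, one could cancel a common factor to get a quadruple of degree $\le n-1$, contradicting $\deg(\phi_\zeta)=n$.
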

\begin{proof}
(1) By Proposition \ref{Ztau}, there exists $\tau\in \t \setminus \{\sigma_1, \dots, \sigma_k \}$ such that  the set $Z_\tau$ consists of at most $k$ points. 
Proposition \ref{prop6.10} asserts that if $M$ is positive definite and $\zeta \in \t \setminus Z_\tau$ then there exists a solution $\phi$ to Problem {\rm \ref{prob}} with $\phi(\tau)=\zeta$.   By Proposition \ref{atmost1}, the solution (when it exists) is unique.\\

(2) Let $\zeta \in \t$ be such that there is a solution $\phi$ of Problem {\rm \ref{prob}} satisfying $\phi(\tau)=\zeta$. With the setup of the proof of Proposition \ref{prop6.10}, we have  $B_{\zeta, \tau} \ge 0$ and
\be\label{eq6.50}
\ran(B_{\zeta, \tau}) = \begin{bmatrix}-M^{-1}u_{\zeta, \tau}\\ 1\end{bmatrix}^\perp.
\ee
For $\lambda \in \d$, we define a $(n+1) \times 1$ column matrix $v_{\zeta,\lambda}$, by
$$v_{\zeta,\lambda} = \begin{bmatrix}\frac{1-\overline{\eta_1}\phi(\lambda)}{1-\overline{\sigma_1}\lambda}\\ \vdots\\ \frac{1-\overline{\eta_n}\phi(\lambda)}{1-\overline{\sigma_n}\lambda}\\ \\ \frac{1-\overline{\zeta}\phi(\lambda)}{1-\overline{\tau}\lambda}\end{bmatrix},$$
and define a $(n+2) \times (n+2)$ matrix $C_{\zeta,\lambda}$ by
$$C_{\zeta,\lambda} = \begin{bmatrix} B_{\zeta, \tau} & v_{\zeta,\lambda}\\ v_{\zeta,\lambda}^* & \frac{1-|\phi(\lambda)|^2}{1-|\lambda|^2}\end{bmatrix}.$$
As $C_{\zeta,\lambda}$ is the localization of the Pick matrix for $\phi$ to the points $\sigma_1,\ldots, \sigma_n, \tau, \lambda$, it follows that $C_{{\zeta,\lambda}} \ge 0$. Hence,  equation \eqref{eq6.50} implies that
\be\label{eq6.60}
\left\langle v_{\zeta,\lambda}\ , \begin{bmatrix}-M^{-1}u_{\zeta, \tau}\\ 1\end{bmatrix}\right\rangle = 0.
\ee
Note that 
\be\label{eq6.601}
u_{\zeta, \tau} = x_\tau - \zeta y_\tau \ \text{ and } v_{\zeta,\lambda} = \begin{bmatrix}x_\lambda\\ \frac{1}{1-\overline{\tau}\lambda}\end{bmatrix} - \phi(\lambda)\begin{bmatrix}y_\lambda\\ \frac{\overline{\zeta}}{1-\overline{\tau}\lambda}\end{bmatrix}
\ee
where $n \times 1 $ vectors $x_\lambda$ and $y_\lambda$ are defined
for $\lambda \in \d^- \setminus \{ \sigma_1, \dots, \sigma_k\}$ by the formulas \eqref{x-y-lambda}.
Hence, by  equations \eqref{eq6.60} and \eqref{eq6.601}, we have 
\begin{align}
0 &=\left\langle v_{\zeta,\lambda}\ , \begin{bmatrix}-M^{-1}u_{\zeta, \tau}\\ 1
\end{bmatrix}\right\rangle \nn\\
~ & =\left\langle \begin{bmatrix} x_\lambda\\ \frac{1}{1-\overline{\tau}\lambda}
\end{bmatrix} - \phi(\lambda) \begin{bmatrix}y_\lambda\\ 
\frac{\overline{\zeta}}{1-\overline{\tau}\lambda}\end{bmatrix}\ , 
\begin{bmatrix}-M^{-1}(x_\tau - \zeta y_\tau)\\ 1\end{bmatrix} \right\rangle \nn\\
~ & =\langle (x_\lambda - \phi(\lambda)y_\lambda), -M^{-1}(x_\tau - \zeta y_\tau)\rangle  + \frac{1- \bar{\zeta}\phi(\lambda) }{1-\overline{\tau}\lambda}.
\end{align}
Therefore
\be\label{eq6.70}
\langle x_\lambda - \phi(\lambda)y_\lambda, \zeta M^{-1}y_\tau - M^{-1}x_\tau \rangle +\frac{1}{1-\overline{\tau}\lambda} - \phi(\lambda)\frac{\overline{\zeta}}{1-\overline{\tau}\lambda}= 0.
\ee
Equation \eqref{eq6.70} for $\phi(\lambda)$ yields, after simplification, 
\be\label{eq6.80}
\phi(\lambda) = \frac{A(\lambda) \zeta + B(\lambda)}{C(\lambda)\zeta + D(\lambda)},
\ee
where
\begin{align}
A(\lambda) &= -\langle x_\lambda,M^{-1} x_\tau \rangle +\frac{1}{1-\overline{\tau}\lambda} \label{eq6.90},\\
B(\lambda) &= \langle x_\lambda,M^{-1} y_\tau \rangle \label{eq6.100},\\
C(\lambda) &= -\langle y_\lambda,M^{-1} x_\tau \rangle \label{eq6.110},
\end{align}
 and
\begin{align}
D(\lambda) &= \langle y_\lambda,M^{-1} y_\tau \rangle +\frac{1}{1-\overline{\tau}\lambda}. \label{eq6.120}
\end{align}
As the right hand sides of equations \eqref{eq6.90} - \eqref{eq6.120} depend only on the prescribed data of Problem {\rm \ref{prob}},  equation \eqref{eq6.80} implies that $\phi$ is unique as claimed.

To define 
$a_\tau,b_\tau,c_\tau,d_\tau$ with the desired properties, let
$$
\pi(\la) = (1-\overline{\tau}\lambda)\prod_{j=1}^n \frac{1-\overline{\sigma_j}\lambda}{1-\overline{\sigma_j}\tau},
$$
and set 
\be \label{abcd}
a_\tau=\pi A, \;b_\tau=\pi B,\; c_\tau=\pi C, \; \text{and} \; d_\tau=\pi D.
\ee
With these definitions,  equation \eqref{eq6.30} follows immediately from equations \eqref{eq6.90}-\eqref{eq6.120} and equation  \eqref{eq6.40} follows from  equation \eqref{eq6.80}.

(3) To prove the final assertion of Theorem \ref{thm6.10}, assume that $\tilde a, \tilde b, \tilde c, \tilde d$ are rational functions satisfying  equation \eqref{eq6.30.2} and such that for 3 distinct points $\zeta$  in $ \t \setminus Z_\tau$,  equation \eqref{eq6.40.2} holds for all $\lambda \in \d$. Cross multiplication in  equation  \eqref{eq6.40.2}
yields
$$
\tilde a c_\tau\zeta^2 + (\tilde a d_\tau + \tilde b c_\tau)\zeta +\tilde bd_\tau = a_\tau \tilde c\zeta^2 + (a_\tau \tilde d + b_\tau \tilde c)\zeta +b_\tau \tilde d
$$
for 3 distinct values of $\zeta$. Hence,
\be\label{eq6.130}
\tilde a c_\tau = a_\tau \tilde c,
\ee
\be\label{eq6.140}
\tilde ad_\tau + \tilde b c_\tau = a_\tau \tilde d + b_\tau \tilde c
\ee
and
\be\label{eq6.150}
\tilde bd_\tau = b_\tau \tilde d.
\ee
Solving  equation \eqref{eq6.130} for $ \tilde c$ and  equation \eqref{eq6.150} for $\tilde b$ and then substituting into  equation \eqref{eq6.140}, we deduce that
$$
\frac{\tilde a}{a_\tau} = \frac{\tilde d}{d_\tau}.
$$
Here, as  equation \eqref{eq6.30} guarantees that $a_\tau$ and $d_\tau$ are not identically zero, $\tilde a/a_\tau$ and $\tilde d/d_\tau$ are well defined rational functions.
Since equations \eqref{eq6.130} and  \eqref{eq6.150} imply that
$$
\tilde c= \frac{\tilde a}{a_\tau}c_\tau \text{ and } \tilde b = \frac{\tilde d}{d_\tau}b_\tau,
$$
the final assertion of the theorem follows with $X=\tilde a/a_\tau$.

To see the uniqueness of polynomials $a_\tau,b_\tau,c_\tau,d_\tau$ assume that there is a second collection $a_1,b_1,c_1,d_1$ of polynomials of degree $\le n$ such that  equations \eqref{eq6.30.2} and \eqref{eq6.40.2} hold. By what was proved in the previous paragraph, it is not  the case that both collections of polynomials are relatively prime. Otherwise, there is a third collection $a_2,b_2,c_2,d_2$ of polynomials of degree $\le n-1$ such that  equations \eqref{eq6.30} and \eqref{eq6.40} hold. This  contradicts the fact that $\deg(\phi)=n$ for all $\zeta \in \t \setminus Z_\tau$.
 \qed \end{proof}

In view of Theorem \ref{thm6.10} we can make precise what we mean by a parametrization of the solutions of a  Blaschke interpolation problem.
\begin{defin} \label{deflfparam}
Let $(\si,\eta,\rho)$ be  Blaschke interpolation data, with $n$ distinct interpolation nodes of which $k$ lie in $\t$.
Suppose that Problem {\rm \ref{prob}} is solvable. We say that 
\[
\ph=\frac{a\zeta+b}{c\zeta+d}
\]
is a {\em normalised linear fractional parametrization of the solutions of Problem  \ref{prob}} if
\begin{enumerate}
\item $a,b,c,d$ are polynomials of degree at most $n$;
\item for all but at most $k$ values of $\zeta\in\t$, the function
\be\label{lfform}
\ph(\la)=\frac{a(\la)\zeta+b(\la)}{c(\la)\zeta+d(\la)}
\ee
is a solution of Problem {\rm \ref{prob}};
\item for some point $\tau\in\t\setminus \{\si_1,\dots,\si_k\}$,
\[
\bbm a(\tau)&b(\tau) \\ c(\tau)&d(\tau) \ebm  = \bbm 1 & 0\\0 & 1 \ebm;
\]
\item  every solution $\ph$ of Problem {\rm \ref{prob}} has the form \eqref{lfform} for some $\zeta\in\t$.
\end{enumerate}
\end{defin}
\begin{remark}\label{explicit-solution} {\rm Let $(\si,\eta,\rho)$ be  Blaschke interpolation data, with $n$ distinct interpolation nodes of which $k$ lie in $\t$. Suppose the Pick matrix $M$ of this problem is positive definite.
The above proof of Theorem \ref{thm6.10} gives an {\em explicit}  linear fractional parametrization of the solutions of Problem \ref{prob}.  As in Theorem \ref{thm6.10} choose 
$\tau \in \T\setminus\{\si_1,\dots,\si_k\}$ such that the set
$Z_\tau$ contains at most $k$ points. A normalised linear fractional parametrization of the solution set of Problem {\rm \ref{prob}} is
\[
\ph=\frac{a_\tau\zeta+b_\tau}{c_\tau \zeta+d_\tau},
\]
 where 
the polynomials
$ a_\tau$, $b_\tau$, $ c_\tau$ and $ d_\tau$ are defined by equations \eqref{abcd}.  Note that different choices of $\tau$  will yield different normalised parametrizations.
}
\end{remark}
In the terminology of Definition \ref{deflfparam}, Theorem \ref{thm6.10} tells us the following.

\begin{corollary}\label{norm-lin-param} Let $(\si,\eta,\rho)$ be  Blaschke interpolation data, with $n$ distinct interpolation nodes.
% of which $k$ lie in $\t$. 
Suppose the Pick matrix $M$ of this problem is positive definite. There exists a normalized linear fractional parametrization 
\[
\ph=\frac{a\zeta+b}{c\zeta+d}
\]
of the solutions of Problem {\rm \ref{prob}}. 
Moreover
\begin{enumerate}
\item  at least one of the polynomials
$a,b,c,d$ has degree  $n$;
\item the polynomials
$a,b,c,d$  have no common zero in $\C$;
%\item $ad -bc$ has no zeros in $\D \setminus\{\si_{k+1},\dots,\si_n\} $;
\item $|c| \le |d|$ on $\d^-$.
\end{enumerate}
\end{corollary}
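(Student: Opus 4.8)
The plan is to pick $\tau\in\mathbb{T}\setminus\{\sigma_1,\dots,\sigma_k\}$ as provided by Proposition \ref{Ztau}(ii) (so that the set $Z_\tau$ has at most $k$ points), apply Theorem \ref{thm6.10} with this $\tau$, and set $a=a_\tau$, $b=b_\tau$, $c=c_\tau$, $d=d_\tau$. That $\varphi=(a\zeta+b)/(c\zeta+d)$ is a normalised linear fractional parametrization in the sense of Definition \ref{deflfparam} is then immediate: conditions (1) and (3) of that definition are part of the conclusion of Theorem \ref{thm6.10}(2), condition (2) holds with the finite exceptional set $Z_\tau$ by Theorem \ref{thm6.10}(1), and condition (4) holds because any solution $\varphi$ of Problem \ref{prob} is a finite Blaschke product, so $\varphi(\tau)\in\mathbb{T}$, whence $\varphi=(a\zeta+b)/(c\zeta+d)$ with $\zeta=\varphi(\tau)$ by \eqref{eq6.40}.

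All three of the extra assertions rest on a single observation. Fix $\zeta\in\mathbb{T}\setminus Z_\tau$ and let $\varphi_\zeta=(a\zeta+b)/(c\zeta+d)$ be the corresponding solution of Problem \ref{prob}, which is a finite Blaschke product of degree exactly $n$; write $\varphi_\zeta=P_\zeta/Q_\zeta$ with $P_\zeta,Q_\zeta$ coprime polynomials, $\deg P_\zeta=n$, and $Q_\zeta$ without zeros in $\mathbb{D}^-$. Cross-multiplying $\varphi_\zeta=(a\zeta+b)/(c\zeta+d)=P_\zeta/Q_\zeta$ and using coprimality yields $a\zeta+b=r_\zeta P_\zeta$ and $c\zeta+d=r_\zeta Q_\zeta$ for some polynomial $r_\zeta$; since $\deg(a\zeta+b)\le n=\deg P_\zeta$ and $a\zeta+b\not\equiv0$, the factor $r_\zeta$ is a nonzero constant. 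Hence $\deg(a\zeta+b)=n$ for every $\zeta\in\mathbb{T}\setminus Z_\tau$, which forces one of $a,b$ to have degree $n$; this proves (1). For (2), put $g=\gcd(a,b,c,d)$; since $g\mid a$ and $a(\tau)=1$, the polynomial $g$ is nonzero with $g(\tau)\ne0$. Writing $a=g\hat a$, and likewise for $b,c,d$, we may cancel $g$ and re-run the argument with $\hat a,\hat b,\hat c,\hat d$ in place of $a,b,c,d$, getting $n=\deg P_\zeta\le\deg(\hat a\zeta+\hat b)\le n-\deg g$; thus $\deg g=0$, so $a,b,c,d$ have no common zero in $\mathbb{C}$.

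For (3), the observation above shows in particular that for every $\zeta\in\mathbb{T}\setminus Z_\tau$ the polynomial $c\zeta+d=r_\zeta Q_\zeta$ has no zero in $\mathbb{D}^-$. I would then look at the rational function $R=-d/c$; the case $c\equiv0$ is trivial (then $|c|=0\le|d|$, and $d\not\equiv0$ since $d(\tau)=1$), and otherwise $R$ is non-constant, because $d+\gamma c$ does not vanish identically for any $\gamma\in\mathbb{C}$ (its value at $\tau$ is $1$). For $\lambda\in\mathbb{D}$ the value $R(\lambda)$ cannot belong to $\mathbb{T}\setminus Z_\tau$: if $c(\lambda)\ne0$ this is because $c(\lambda)\zeta+d(\lambda)=c(\lambda)(\zeta-R(\lambda))\ne0$ for every admissible $\zeta$, and if $c(\lambda)=0$ then $d(\lambda)\ne0$ (again by the no-zeros statement), so $R(\lambda)=\infty$. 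Since $R$ is non-constant, $R(\mathbb{D})$ is open and connected; being open and disjoint from $\mathbb{T}\setminus Z_\tau$ with $Z_\tau$ finite, it is disjoint from all of $\mathbb{T}$, hence contained in one of the two components $\mathbb{D}$, $\widehat{\mathbb{C}}\setminus\mathbb{D}^-$ of $\widehat{\mathbb{C}}\setminus\mathbb{T}$. The normalisation now decides which one: $c(\tau)=0$ and $d(\tau)=1$ give $R(\tau)=\infty$, so $\infty\in\overline{R(\mathbb{D})}$, which excludes $R(\mathbb{D})\subseteq\mathbb{D}$. Therefore $|R(\lambda)|>1$ whenever $\lambda\in\mathbb{D}$ and $c(\lambda)\ne0$, i.e.\ $|c(\lambda)|<|d(\lambda)|$ there (while $|c(\lambda)|=0\le|d(\lambda)|$ when $c(\lambda)=0$), so $|c|\le|d|$ on $\mathbb{D}$ and hence, by continuity, on $\mathbb{D}^-$.

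I expect (3) to be the main obstacle. The crucial points are: recognising that the denominator $c\zeta+d$ is zero-free on $\mathbb{D}^-$ for every admissible $\zeta$, which forces $-d/c$ to avoid $\mathbb{T}$ throughout $\mathbb{D}$; and noticing that the normalisation $c(\tau)=0$, $d(\tau)=1$ is precisely what forces $R(\mathbb{D})$ into the exterior component. One could replace the open mapping theorem here by a Rouch\'e/continuity-of-zeros argument: were $c\zeta_0+d$ to vanish somewhere in $\mathbb{D}$ for some $\zeta_0\in\mathbb{T}$, the same would be true of $c\zeta+d$ for $\zeta\in\mathbb{T}$ near $\zeta_0$, in particular for some $\zeta\in\mathbb{T}\setminus Z_\tau$, contradicting the preceding (the case $c\zeta_0+d\equiv0$ being excluded since its value at $\tau$ would be $1$).
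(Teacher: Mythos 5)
Your proof is correct, and while its overall skeleton (choose $\tau$ via Proposition \ref{Ztau}, invoke Theorem \ref{thm6.10}, check the four clauses of Definition \ref{deflfparam}, then degree-count for assertion (1)) matches the paper, your treatment of (2) and (3) goes by a genuinely different route. Everything in your version rests on one structural observation the paper does not use: writing each solution $\ph_\zeta$, $\zeta\in\t\setminus Z_\tau$, in lowest terms as $P_\zeta/Q_\zeta$ and cross-multiplying forces $a\zeta+b=r_\zeta P_\zeta$ and $c\zeta+d=r_\zeta Q_\zeta$ with $r_\zeta$ a nonzero constant. That single fact delivers (1) and (2) by pure degree counting and, crucially, shows that $c\zeta+d$ is zero-free on $\d^-$ for every admissible $\zeta$, after which (3) follows from your open-mapping/connectedness argument on $R=-d/c$ in $\C\cup\{\infty\}$ (the normalization $c(\tau)=0$, $d(\tau)=1$ putting $R(\d)$ in the exterior component, exactly as you say). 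The paper instead proves (2) by cancelling a putative common factor and appealing to the uniqueness clause of Theorem \ref{thm6.10}(2), and proves (3) by noting that $ad-bc$ is not identically zero (its value at $\tau$ is $1$), that any zero of $|d|-|c|$ in $\d$ forces, via the bound $|\ph|\le 1$, a simultaneous zero of the numerator and hence of $ad-bc$, and that the continuous function $|d|-|c|$, positive near $\tau$ and with only finitely many zeros, cannot change sign on the connected set $\d$. Both routes are sound; yours buys the slightly stronger conclusion $|c|<|d|$ throughout $\d$ and keeps the argument inside elementary polynomial algebra plus topology of the sphere, while the paper's avoids invoking the explicit Blaschke factorization of the solutions and works directly from $|\ph|\le 1$. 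Your Rouch\'e alternative for (3) is also fine, provided one still finishes with the sign/connectedness step to convert ``$c\zeta+d$ never vanishes in $\d$ for $\zeta\in\t$'' into $|c|\le|d|$.
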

\begin{proof} 
As in Theorem \ref{thm6.10} choose 
$\tau \in \T\setminus\{\si_1,\dots,\si_k\}$ such that the set
$Z_\tau$ contains at most $k$ points. Let the polynomials
$a = a_\tau$, $b= b_\tau$, $c= c_\tau$ and $d= d_\tau$ be defined by equations \eqref{abcd}.  
Theorem \ref{thm6.10} shows that $(a,b,c,d)$ has the properties (1), (2) and (3) of Definition \ref{deflfparam}. 
Let  $\phi$ be a solution of 
 Problem {\rm \ref{prob}} and let $\zeta =\phi(\tau)$. By  Theorem \ref{thm6.10}(2), $\phi$ is given by equation \eqref{lfform}. Hence property (4) of Definition \ref{deflfparam} holds.

Moreover (1) if all of $a,b,c,d$ have degree strictly less than $n$ then $\ph=\frac{a\zeta+b}{c\zeta+d}$ is a rational function of degree strictly less than $n$, and so is not a solution of   Problem \ref{prob}.

(2) Suppose $\alpha \in \C$ is a common zero of the polynomials $a,b,c,d$. On cancelling the common factor $\la-\al$ above and below in equations \eqref{lfform} and multiplying numerator and denominator by a suitable nonzero scalar we obtain a different normalized parametrization of solutions of Problem \ref{prob}, with the same $\tau$, contrary to the uniqueness statement in Theorem \ref{thm6.10}(2).  Hence $a,b,c$ and $d$ have no common zero in $\c$.

(3) 
By the normalization property in  Definition \ref{deflfparam}(3),
\[
(ad-bc)(\la) \to 1 \quad \mbox{ as }\la\to \tau.
\]
Hence $ad-bc$ is a polynomial of degree at most $2n$ and is not identically zero.  Therefore
\[
Y \df \{\la\in\d : (ad-bc)(\la)=0\}
\]
contains at most $2n$ points.

We claim that the real-valued function $f(\la)=|d(\la)|-|c(\la)|$ has no zeros in $\d\setminus Y$.  For suppose that $\la_0$ is a zero of $f$.  Then there exists $\zeta_0\in\t$ such that $c(\la_0)\zeta_0+d(\la_0)=0$.  Since $|\ph|=\left|\frac{a\zeta+b}{c\zeta+d}\right|\leq 1$ on $\d$ for almost all $\zeta\in\t$, it follows that also $a(\la_0)\zeta_0+b(\la_0)=0$, and therefore $(ad-bc)(\la_0)=0$, that is, $\la_0\in Y$.

Since $c(\tau)=0$ and $d(\tau)=1$, the continuous function $f$ is strictly positive on a neighbourhood of $\tau$ in $\d$.  Suppose that $f(\la_1) <0$ for some $\la_1\in\d$.  Then $f<0$ on an open set, and hence there are infinitely many points in $\d$ at which $f=0$, a contradiction.  Hence $f\geq 0$ on $\d$.
\qed
\end{proof}

%%%%%%%%%%%%%%%%%%%%%%%%

\section{Prescribing the nodes and values} \label{sec7}

In this section we shall show how to construct rational $\gaminn$ functions with prescribed royal nodes and values. Our answer will be in terms of the solution to Problem {\rm \ref{prob}} as described in Proposition \ref{prop6.10} and Theorem \ref{thm6.10}.  First we require a notion of multiplicity for royal nodes.

\begin{definition}\label{def3.30}
Let $h$ be a rational $\gaminn$ function with royal polynomial $R$.  If  $\sigma$ is a zero of $R$ of order $\ell$, we define the \emph{multiplicity $\#\si$} of $\si$ (as a royal node of $h$) by 
\[
 \#\si \quad = \quad \twopartdef{ \ell} {\mbox{ if }\sigma \in \d}{\half \ell}{ \mbox{ if }\sigma \in \t.}
\]
The {\em type} of $h$ is the ordered pair $(n,k)$ where $n$ is the sum of the multiplicities of the royal nodes of $h$ that lie in $\d^-$ and  $k$ is the sum of the multiplicities of the royal nodes of $h$ that lie in $\t$. We denote by $\rnk$ the collection of rational $\gaminn$ functions $h$ of type $(n,k)$.
\end{definition}
By \cite[Theorem 3.8]{ALY14}, if $h=(s,p)$ belongs to $\rnk$ then $\deg(h)= n$ and $p$ is a Blaschke product of degree $n$.

The following example of rational $\gaminn$ functions from $
\mathcal{R}^{n,k}$ for even $n \ge 2$ can be found in \cite[Proposition 12.1]{ALY12}.
\begin{example}{\rm
 For all $\nu \ge 0$ and $0<r<1$,  the function 
\begin{equation}\label{h_nu}
h_{\nu}(\la) =\left( 2(1-r) \frac{\la^{\nu+1}}{1+ r\la^{2\nu+1}},
\frac{\la(\la^{2\nu+1}+r)}{1+ r\la^{2\nu+1}} \right), \;\la \in \D,
\end{equation}
belongs to $
\mathcal{R}^{2 \nu +2,2 \nu +1}$.
The royal nodes of $h_\nu$ that lie in $\t$, being the points at which $|s|=2$, are the $(2\nu +1)$th roots of $-1$, that is, 
\[
\omega_j = e^{\ii \pi (2j+1)/(2\nu+1)}, \; j= 0, \dots, 2 \nu.
\]
They are all of multiplicity $1$.
Note that there is a simple royal node at $0$.
}
\end{example}

In this section we are concerned only with rational $\Ga$-inner functions whose royal nodes all have multiplicity $1$.
%\red Define {\em royal interpolation data}. \\

The following elementary calculation will be useful.
\begin{lemma}\label{s-c-d} 
Let $a,b,c,d,s_0,p_0\in\c$ and suppose that $|\p0| =1$,  
$\s0 = \overline{\s0}\p0$, \, $s_0c\neq 2d$ and $|s_0|<2 $.  Let
\be\label{eq8.021}
s=2\frac{2p_0c-s_0d}{s_0c-2d}.
\ee
Then
\be\label{eq8.022}
|s| \le 2 \;  \iff \; |c| \le |d|.
\ee
\end{lemma}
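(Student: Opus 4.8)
The plan is to exploit the Möbius-like structure of the map $c/d \mapsto s/2$. Write $w = c/d$ (legitimate on $\d^-$ where $d$ does not vanish; the vanishing set of $d$ is finite and can be handled by continuity) and observe that \eqref{eq8.021} expresses $s/2$ as the value at $w$ of the linear fractional transformation
\[
T(w) = \frac{p_0 w - \tfrac12 s_0}{\tfrac12 s_0 w - 1}.
\]
The claim \eqref{eq8.022} is then equivalent to the assertion that $T$ maps the closed unit disc $|w|\le 1$ onto the closed disc $|\zeta|\le 2$, i.e. that the rescaled map $\tfrac12 T$ is an automorphism of $\d^-$. So the first step is to verify that $\tfrac12 T$ is indeed a disc automorphism, using the hypotheses $|p_0|=1$, $s_0 = \bar s_0 p_0$ and $|s_0|<2$.

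The cleanest way to see this is to rewrite $\tfrac12 T(w)$ as a unimodular constant times a Blaschke factor. Set $\al = \tfrac12 s_0 \bar p_0$; the hypothesis $s_0 = \bar s_0 p_0$ gives $\bar\al = \tfrac12\bar s_0 p_0 \cdot \overline{p_0}\,p_0$— more directly, $s_0 = \bar s_0 p_0$ together with $|p_0|=1$ forces $\overline{(\tfrac12 s_0 \bar p_0)} = \tfrac12 \bar s_0 p_0 \bar p_0^2\cdot p_0$; rather than chase conjugates abstractly, I would simply compute: from $s_0=\bar s_0 p_0$ and $|p_0|=1$ one gets $|\al| = |s_0|/2 < 1$, so $\al\in\d$, and a short manipulation shows
\[
\tfrac12 T(w) = \frac{p_0 w - \tfrac12 s_0}{s_0 w - 2} = \bar p_0^{-1}\cdot\frac{w - \al}{1 - \bar\al w}\cdot(\text{unimodular constant}),
\]
where the constant is checked to have modulus $1$ using precisely $s_0=\bar s_0 p_0$ and $|p_0|=1$. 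Thus $\tfrac12 T = \lambda B_\al$ for some $\lambda\in\t$ and $\al\in\d$, which is an automorphism of $\d$ mapping $\t$ onto $\t$ and $\d$ onto $\d$.

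Granting this, the equivalence is immediate: $|s|\le 2 \iff |\tfrac12 T(w)|\le 1 \iff |B_\al(w)|\le 1 \iff |w|\le 1 \iff |c|\le|d|$, where I use that a disc automorphism preserves $\d^-$, $\t$ and $\C\setminus\d^-$ setwise. The condition $s_0 c \ne 2d$ is exactly what guarantees the denominator $s_0 w - 2$ is nonzero so that $s$ is well-defined; at the finitely many points where $d=0$ one passes to the limit, or simply notes the statement is about the scalars $c,d$ with $d\ne 0$ (the degenerate case $d=0$ forces, via $s_0c\ne 2d=0$, that $c\ne 0$, hence $|c|>|d|$, while $s = 2\cdot\tfrac{2p_0 c}{s_0 c} = \tfrac{4p_0}{s_0}$ has modulus $4/|s_0| > 2$, consistent with the claimed equivalence).

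The only mild obstacle is the bookkeeping in identifying the unimodular constant in the Blaschke representation — one must use the hypothesis $s_0=\bar s_0 p_0$ at exactly the right moment, since it is the algebraic shadow of "$s_0$ and $p_0$ are the boundary data of a $\Gamma$-inner function" (compare Proposition \ref{prop2.10}). Everything else is routine. I expect the writeup to be under half a page.
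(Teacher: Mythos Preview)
Your argument is correct. The approach, however, differs from the paper's. The paper simply expands $|s|\le 2$ as $|2p_0c-s_0d|^2 \le |s_0c-2d|^2$, observes that the cross terms $2\RE(2p_0\bar s_0 c\bar d)$ and $2\RE(2s_0 c\bar d)$ coincide because $p_0\bar s_0 = s_0$, and is left with $(4-|s_0|^2)(|c|^2-|d|^2)\le 0$, whence the result since $|s_0|<2$. Your route is more structural: you recognise $s/2$ as a M\"obius transformation $T$ in $w=c/d$ and show $T$ is a disc automorphism. In fact the computation collapses nicely --- with $\al=\tfrac12 s_0\bar p_0$ one has $\bar\al=\tfrac12\bar s_0 p_0=\tfrac12 s_0$ by hypothesis, and then $T(w)=-p_0 B_\al(w)$ exactly, so the ``bookkeeping'' you flag as the only obstacle is one line. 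The paper's proof is marginally shorter; yours explains \emph{why} the equivalence holds and makes the role of the condition $s_0=\bar s_0 p_0$ transparent.

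One small slip: in the lemma $a,b,c,d$ are scalars in $\c$, not polynomials on $\d^-$, so your parenthetical about ``the vanishing set of $d$ is finite'' and ``continuity'' is out of place. You do, however, handle the scalar case $d=0$ correctly at the end, so no harm done.
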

\begin{proof}  
\begin{align}
\label{s-c-d1}
|s| \le 2 &\iff |2p_0c-s_0d|^2 \le |s_0c-2d|^2\\
~ &\iff  4|c|^2 -2 {\rm Re} (2 p_0 c \bar{s_0} \overline{d}) + |s_0|^2 |d|^2  \nn\\
~ & \qquad \le |s_0|^2 |c|^2-2 {\rm Re} (2 s_0 c \overline{d}) +4|d|^2  \nn
\\
~ &\iff  (4- |s_0|^2)(|c|^2 -|d|^2) \le 4 {\rm Re} (s_0 c \overline{d} - s_0 c \overline{d}) \nn \\
~ &\iff  |c|^2 - |d|^2 \leq 0 \nn\\
\label{s-c-d2}
 &\iff |c| \le |d|.
\end{align}
\qed
\end{proof}

The next result provides a necessary condition for the existence of a rational $\Ga$-inner function with prescribed royal interpolation data.
\begin{thm}\label{thm7.10}
Let $h=(s,p)$ be a rational  $\Gamma$-inner function of type $(n,k)$ having distinct royal nodes $\si_1,\dots,\si_n$ and corresponding royal values $\eta_1,\dots,\eta_n$, where $\si_1,\dots,\si_k \in\t$.  Let $\rho_j = \half Ap(\si_j)$ for $j=1,\dots,k$.
\begin{enumerate}
\item There exists a rational inner function $\ph$  that solves Problem {\rm \ref{prob}}, that is, such that
 $\deg(\ph) = n$,
\be\label{17.1}
\ph(\si_j)= \eta_j \quad \mbox{ for } \quad j=1,\dots,n
\ee
and
\be\label{17.2}
A\ph(\si_j)= \rho_j \quad \mbox{ for } \quad j=1,\dots,k.
\ee
Any such function $\ph$ is expressible in the form
$\ph = \Phi_\omega\circ h$ for some $\omega \in \t$.
\item There exist polynomials $a,b,c,d$ of degree at most $n$ such that a normalized  parametrization of the solutions of
Problem {\rm \ref{prob}} is
\[
\ph= \frac{a\zeta+b}{c\zeta+d}, \quad \zeta \in\t.
\]
\item  For any polynomials $a,b,c,d$ as in {\rm (2)}, there exist $s_0,p_0 \in\c$ such that 
\begin{align}
\label{eq7.170}
|\p0| &=1,  \\
\label{eq7.180}
\s0 &= \overline{\s0}\p0, \\
\label{eq7.190}
|s_0|&<2 \qquad \\
\label{eq7.210}
|c| &\le |d|,
\end{align}
\be\label{eq7.200}
 \s0 a - 2b + 2\p0 c - \s0 d = 0
\ee
and
\be\label{eq7.220}
(2p_0c- s_0d)^2 \neq (-2p_0a+s_0b)(s_0c -2d).
\ee
\end{enumerate}
Moreover 
\begin{align}
\label{eq7.231}
s&=2\frac{2\p0 c - \s0 d}{\s0 c - 2d},\\
\label{eq7.232}
p&=\frac{-2\p0 a + \s0 b}{\s0 c - 2d}
\end{align}
\end{thm}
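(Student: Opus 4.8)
The plan is to exploit the functions $\Phi_\omega$ from \eqref{eq1.26_I}, which are Carathéodory extremals for $\Ga$, and the properties of royal nodes established in Proposition \ref{phaser_h}. For part (1), I would first show that $\Phi_\omega\circ h$ is a rational inner function for a generic choice of $\omega\in\t$: since $h$ maps $\d$ into $\Ga$ and $\Phi_\omega$ maps $\Ga$ into $\d^-$, the composite is a Schur-class function, and because $h$ is $\Ga$-inner (so $h(\t)\subset b\Ga$) and $\Phi_\omega$ maps $b\Ga$ into $\t$, the composite has unimodular boundary values a.e.; rationality is clear. To pin down the degree, I would invoke the identity $\deg(h)=\deg(p)=n$ and analyse how the degree of $\Phi_\omega\circ h$ relates to that of $h$; the key point is that for all but finitely many $\omega$ the singularity $(2\bar\omega,\bar\omega^2)$ of $\Phi_\omega$ is not hit on $\d^-$ and no cancellation occurs, giving degree exactly $n$. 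Then Proposition \ref{phaser_h}(iii) gives $\Phi_\omega\circ h(\si_j)=\eta_j$ for all $j$ (this holds for \emph{every} $\omega$ with $\omega\neq-\bar\eta_j$, and for the finitely many bad $\omega$ the l'Hôpital computation \eqref{phi_sigma_2} still gives $\eta_j$), and Proposition \ref{phaser_h}(iv) gives $A(\Phi_\omega\circ h)(\si_j)=\tfrac12 Ap(\si_j)=\rho_j$ for $j\le k$ whenever $\omega\neq-\bar\eta_j$. The converse claim — that \emph{any} solution $\ph$ of Problem \ref{prob} has the form $\Phi_\omega\circ h$ — should follow from the linear-fractional parametrization: both $\Phi_\omega\circ h$ (as $\omega$ varies) and the solution set of Problem \ref{prob} trace out a one-parameter linear-fractional family through the same data, so matching at enough nodes forces them to coincide; alternatively one solves $\ph=\Phi_\omega\circ h$ for $\omega$ directly, using that $\Phi_\bullet(s,p)$ is itself linear-fractional in $\omega$.

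For part (2), I would simply apply Theorem \ref{thm6.10} and Corollary \ref{norm-lin-param}: part (1) shows Problem \ref{prob} is solvable, hence by Proposition \ref{M>0} its Pick matrix $M$ is positive definite, hence Corollary \ref{norm-lin-param} yields the normalized linear-fractional parametrization with polynomials $a,b,c,d$ of degree at most $n$. In particular property (3) of that corollary gives $|c|\le|d|$ on $\d^-$, which already disposes of \eqref{eq7.210} in part (3).

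For part (3) the strategy is to choose $\omega_0\in\t$ so that $\ph_0 := \Phi_{\omega_0}\circ h$ is a solution of Problem \ref{prob} (legitimate by part (1)), and then to back out $s_0,p_0$ by comparing $\ph_0$ with the parametrization. Writing $\ph_0=(a\zeta_0+b)/(c\zeta_0+d)$ for the appropriate $\zeta_0=\ph_0(\tau)\in\t$, and using the defining relation $\Phi_\omega(s,p)=(2\omega p-s)/(2-\omega s)$, I would set $p_0 := \bar\omega_0^2$ (or the natural unimodular quantity forced by the fixed point of $\Phi_{\omega_0}$ on $\royal$) and $s_0$ the corresponding value so that \eqref{eq7.170}–\eqref{eq7.190} hold — $|p_0|=1$ is automatic, $s_0=\bar s_0 p_0$ is the $b\Ga$-membership relation from Proposition \ref{prop2.10}, and $|s_0|<2$ holds because the relevant point lies in the interior away from $\royal$ (this is where $h(\d)\not\subset\royal$ is used implicitly through the construction). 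The linear relation \eqref{eq7.200} should then fall out of equating the two expressions for $\ph_0$ and clearing denominators, essentially expressing that $\zeta_0$ is the image under $\Phi_{\omega_0}$-type algebra of the point $(s_0,p_0)$; and the formulas \eqref{eq7.231}, \eqref{eq7.232} come from inverting the same linear-fractional relation, with Lemma \ref{s-c-d} then certifying that \eqref{eq7.231} is consistent with $|s|\le2$ precisely because $|c|\le|d|$. The non-degeneracy \eqref{eq7.220} is equivalent to the denominator $s_0c-2d$ not identically vanishing together with genuineness of the solution (i.e.\ $h(\d)\not\subset\royal$), so it is a non-vanishing-of-a-resultant statement.

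\textbf{Main obstacle.} The delicate point is part (1): proving that $\deg(\Phi_\omega\circ h)=n$ for a suitable $\omega$, rather than something smaller. One must rule out degree drop caused either by the pole of $\Phi_\omega$ landing on $\d^-$ or by cancellation between numerator and denominator of $\Phi_\omega\circ h=(2\omega p-s)/(2-\omega s)$. I expect this to require a careful argument that the zeros of $2-\omega s$ on $\t$ are exactly the royal nodes of $h$ on $\t$ (those $\si_j$, $j\le k$, where $s(\si_j)=-2\eta_j$ and $\omega=-\bar\eta_j$) and that at all other points no cancellation occurs, so that for $\omega\notin\{-\bar\eta_1,\dots,-\bar\eta_k\}$ the finite Blaschke product $\Phi_\omega\circ h$ has full degree $n$ — this is the step that genuinely uses the structure theorem $\deg(h)=\deg(p)$ and the fact \cite[Theorem 3.8]{ALY14} that $p$ has degree $n$. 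Everything after that is linear-fractional bookkeeping, which is routine but must be organized so that the normalization at $\tau$ and the sign conventions in \eqref{eq7.231}–\eqref{eq7.232} come out correctly.
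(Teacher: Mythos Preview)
Your overall architecture matches the paper's proof: for (1) you compose with $\Phi_\omega$ and invoke Proposition~\ref{phaser_h} exactly as the paper does, and you correctly flag the degree count as the one substantive point (the paper disposes of it via \cite[Proposition 2.2]{ALY14} and \cite[Theorem 7.12]{ALY12}, showing that cancellations in $(2\omega p-s)/(2-\omega s)$ occur only when $\omega=-\bar\eta_j$ for some $j\le k$). For (2) you correctly invoke Proposition~\ref{M>0} and Corollary~\ref{norm-lin-param}.

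The gap is in part (3). Your proposed definition $p_0:=\bar\omega_0^{\,2}$ is wrong, and the hedge about ``the fixed point of $\Phi_{\omega_0}$ on $\royal$'' does not repair it. The paper's choice is simply $(s_0,p_0):=h(\tau)=(s(\tau),p(\tau))$, where $\tau$ is the normalization point from Definition~\ref{deflfparam}(3). With this choice, conditions \eqref{eq7.170}--\eqref{eq7.190} are immediate: $(s_0,p_0)\in b\Ga$ because $h$ is $\Ga$-inner and $\tau\in\t$, and $|s_0|<2$ because $\tau$ is chosen off the (finitely many) royal nodes. The key step is then to regard $\omega\mapsto\zeta(\omega):=\psi_\omega(\tau)=(2\omega p_0-s_0)/(2-\omega s_0)$ as an automorphism of $\t$, rewrite the \emph{entire} family $\{\psi_\omega\}$ as a linear-fractional family in the parameter $\zeta$ (this is Lemma~\ref{fact7.20}, giving \eqref{eq7.50}), and compare with the normalized parametrization $(a\zeta+b)/(c\zeta+d)$. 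The uniqueness clause Theorem~\ref{thm6.10}(3) then forces the two $2\times 2$ coefficient arrays to agree up to a common rational factor $X$, yielding
\[
Xa=4p-s_0s,\qquad Xb=2s_0p-2p_0s,\qquad Xc=2s_0-2s,\qquad Xd=4p_0-s_0s,
\]
from which \eqref{eq7.200}, \eqref{eq7.231}, \eqref{eq7.232} drop out by elementary elimination, and \eqref{eq7.220} is just $s^2\neq 4p$. Your plan instead fixes a \emph{single} $\omega_0$ and matches the single function $\phi_0$; that yields one functional identity between $a,b,c,d$ and $s,p$ (with coefficients depending on $\omega_0,\zeta_0$), which is not enough to isolate the four relations above or to produce a relation \eqref{eq7.200} among $a,b,c,d$ alone. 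Once you set $(s_0,p_0)=h(\tau)$ and reparametrize the whole $\omega$-family by $\zeta$, the bookkeeping you describe does go through.
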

\begin{proof}
(1)   For $\omega \in \t$ consider the rational function 
\be\label{eq7.10}
\psi_\omega=\Phi_\omega\circ h=\frac{2\omega p-s}{2-\omega s}.
\ee
Then, if $\omega \ne -\bar\eta_1,-\bar\eta_2,\ldots,-\bar\eta_k$,
\be\label{eq7.20}
\psi_\omega(\sigma_j)=\frac{2\omega\eta_j^2+2\eta_j}{2+\omega2\eta_j}=\eta_j \quad \mbox{ for } j=1,2,\ldots,n.
\ee

  We claim that, for $\omega\in\t\setminus\{-\bar\eta_1. \dots,\bar\eta_k\}$,
the function $\ph=\psi_\omega$ is a solution of Problem \ref{prob}.
By \cite[Proposition 3.2]{ALY12}, for any $\omega\in\T$ and any point $(s(\lambda),p(\lambda))\in\Gamma$,
\[
|\Phi_\omega(s(\lambda),p(\lambda))| = 1 \quad\mbox{ if and only if }\quad\omega(s(\lambda)-\bar s(\lambda) p(\lambda))= 1-|p(\lambda)|^2.
\]
Thus it is easy to see that $\ph$ is inner.
The equation \eqref{eq7.20} shows that $\psi_\omega$ takes the required values
at  $\si_1,\dots,\si_n$.  
By Proposition \ref{phaser_h}(iv),
\be\label{eq7.30}
A\psi_\omega(\sigma_j) =\half Ap(\si_j)= \rho_j \quad \mbox{ for } j=1,2,\ldots,k.
\ee
It is also true that $\deg(\psi_\omega)=n$ for $\omega\neq -\bar\eta_1. \dots, -\bar\eta_k$.  By \cite[Proposition 2.2]{ALY14}, for a rational  $\Gamma$-inner function $h =(s,p)$ such that $\deg(p)=n$ and if $D$ is the denominator when $p$ is written in its lowest terms then $s$ can also be written with denominator  $D$.  It follows that 
\be\label{degrees}
\deg(\psi_\omega)= \deg(p)-\#\{\mbox{cancellations between } 2\omega p-s \mbox{ and }2-\omega s\}.
\ee
By \cite[Theorem 7.12]{ALY12}, such cancellations can occur only at the royal nodes $\si_j \in \t, \, j=1,\dots,k$, and then only when $\omega=\half \overline{s(\si_j)}=-\bar\eta_j$.  Hence there are {\em no} cancellations in equation \eqref{degrees}, and so $\deg(\psi_\omega)=n$.
We have shown that, if $\omega \ne -\bar\eta_1,-\bar\eta_2,\ldots,-\bar\eta_k$, then $\ph=\psi_\omega$ is a solution of Problem {\rm \ref{prob}}.

\noindent (2)   Since Problem \ref{prob} is solvable, its Pick matrix is positive definite and so Theorem \ref{thm6.10} tells us that there exist polynomials $a,b,c,d$ of degree at most $n$ which parametrise the solutions of Problem \ref{prob}.   Let us choose a particular such 4-tuple of polynomials, as described in  Theorem \ref{thm6.10}.    By Proposition \ref{Ztau}, there exists $\tau\in \t \setminus \{\sigma_1, \dots, \sigma_k \}$ such that  the set $Z_\tau$ (defined  in equation \eqref{defZtau}) consists of at most $k$ points.
Fix such a $\tau \in \t$; then there exist unique polynomials $a_\tau,b_\tau,c_\tau,d_\tau$ of degree at most $n$ such that 
\be\label{normaliseattau}
\bbm a_\tau(\tau) & b_\tau(\tau) \\ c_\tau(\tau) & d_\tau(\tau) \ebm = \bbm 1&0 \\ 0&1 \ebm
\ee
and, for all $\zeta\in \t\setminus Z_\tau$, the function
\be\label{lftau}
\ph= \frac {a_\tau\zeta+b_\tau}{c_\tau\zeta+d_\tau}
\ee
is the unique solution of Problem \ref{prob} that satisfies $\ph(\tau)=\zeta$.  Moreover, the general 4-tuple of polynomials that parametrises the solutions of Problem \ref{prob} is expressible in the form
\be\label{someX} 
(a,b,c,d)=(Xa_\tau, Xb_\tau, Xc_\tau, Xd_\tau)
\ee
 for some rational function $X$.

Let $\s0=s(\tau)$, $\p0=p(\tau)$.   Since $h$ is $\Ga$-inner, equations \eqref{eq7.170} and \eqref{eq7.180} hold by virtue of Proposition \ref{prop2.10}.  Since $\tau$ is chosen not to be a royal node of $h$, the inequation \eqref{eq7.190} also holds.  Moreover  $|s_0|< 2$, since, for any point $(z_1,z_2)$ in 
the distinguished boundary $\M$ of $\gam$,
 we have $|z_1|=2$ if and only if $z_1^2=4z_2$ -- see \cite[Proposition 3.2(3)]{ALY12}.  It remains to prove equations \eqref{eq7.200} and \eqref{eq7.210}.

\begin{lem}\label{fact7.20}
Let
$$
 Z_\tau^\sim  \stackrel{ \rm def}{=} \left\{ \frac{-2\overline{\eta_1}\p0 -\s0}{2 + \overline{\eta_1} \s0}, \frac{-2\overline{\eta_2}\p0 -\s0}{2 + \overline{\eta_2} \s0},\ldots, \frac{-2\overline{\eta_k}\p0 -\s0}{2 + \overline{\eta_k} \s0}\right\}.
$$
If $\zeta \in \t\setminus Z_\tau^\sim$
then the function
\be\label{eq7.50}
\phi= \frac{(4p-\s0 s)\zeta + 2\s0 p -2\p0 s}{(2\s0 -2s)\zeta + 4\p0 - \s0 s}
\ee
is a solution of Problem {\rm \ref{prob}} and satisfies $\phi(\tau) = \zeta$.
\end{lem}
\begin{proof}
%By Theorem \ref{thm6.10}, for $\zeta \in \t\setminus Z_\tau$,  there is a unique rational $\Ga$-inner function $\ph_\zeta$ which solves Problem \ref{prob} and takes the value $\zeta$ at $\tau$.
Observe that, by equation \eqref{eq7.10}, for any $\omega\in\t$,
\[
\psi_\omega(\tau) = \frac{2\omega p_0-s_0}{2-\omega s_0},
\]
which is well defined since $|s_0|<2$.  We have, for $\zeta\in\t$,
\begin{align*}
\psi_\omega(\tau)=\zeta \quad &\Leftrightarrow\quad \frac{2\omega \p0 - \s0}{2-\omega \s0}=\zeta\\
	&\Leftrightarrow \quad \omega = \frac{2\zeta +\s0}{2\p0 +\zeta \s0}.
\end{align*}
Hence, as long as 
\be\label{1621}
\frac{2\zeta+s_0}{2p_0+\zeta s_0} \neq -\bar\eta_1,\dots,-\bar\eta_k,
\ee
the function
\be\label{gotphi}
\phi= \psi_\omega = \psi_{\frac{2\zeta +\s0}{2\p0 +\zeta \s0}}
\ee
is a solution of Problem \ref{prob} which satisfies in addition $\ph(\tau)=\zeta$.   Condition \eqref{1621} can equally be written 
\[
\zeta \neq -\frac{2\bar\eta_j p_0+s_0}{2+\bar\eta_js_0}, \quad \mbox{ for } j=1,2,\dots,k
\]
or equivalently $\zeta \notin Z_\tau^\sim$.

 On computing $\ph$ from equations \eqref{gotphi} and \eqref{eq7.10} we find that $\ph$ is indeed given by equation \eqref{eq7.50}; this establishes the Lemma.
\qed \end{proof}
We conclude the proof of Theorem \ref{thm7.10} (2).  For $\zeta\in\t\setminus(Z_\tau\cup Z_\tau^\sim)$ we have two expressions for the unique solution of Problem \ref{prob} for which $\ph(\tau)=\zeta$, to wit equations \eqref{lftau} (with the normalising condition \eqref{normaliseattau}) and \eqref{eq7.50}.
Note that
\begin{align*}
\begin{bmatrix}4p(\tau)-\s0 s(\tau) & 2\s0 p(\tau) -2\p0 s(\tau)\\
2\s0 -2s(\tau) & 4\p0 - \s0 s(\tau)\end{bmatrix} &=
\begin{bmatrix}4\p0-\s0 \s0 & 2\s0 \p0 -2\p0 \s0\\
2\s0 -2\s0 & 4\p0 - \s0 \s0\end{bmatrix}\\
&=(4\p0 - \s0^2)\begin{bmatrix}1 & 0\\0 & 1\end{bmatrix}.
\end{align*}
Since the set $Z_\tau \cup Z_\tau^\sim$ is finite, the linear fractional transformations in equations \eqref{lftau} and \eqref{eq7.50} are equal at infinitely many points, hence coincide.   On taking account of the normalising condition we obtain
\[
\bbm a_\tau & b_\tau \\ c_\tau & d_\tau \ebm = \frac{1}{4p_0-s_0^2} \bbm 4p-s_0s & 2s_0p-2p_0s \\ 2s_0-2s & 4p_0-s_0s \ebm.
\]

Suppose that $a$, $b$, $c$, and $d$ are polynomials that parametrise the solutions of Problem \ref{prob}, as in Theorem \ref{thm7.10} (2).  By the observation \eqref{someX}, there exists a rational function $X$ such that
\begin{align}
\label{eq7.60}
Xa&= 4p-\s0 s,\\
\label{eq7.70}
Xb&=2\s0 p -2\p0 s, \\
\label{eq7.80}
Xc&=2\s0 -2s, \text{ and}\\
\label{eq7.90}
Xd&=4\p0 - \s0 s.
\end{align}
Thus
\begin{align*}
X(s_0a-2b&+2p_0c-s_0d)=\\
	& s_0(4p-s_0s)-2(2s_0p-2p_0s)+2p_0(2s_0-2s)-s_0(4p_0-s_0s)
\end{align*}
which is zero.  Hence equation \eqref{eq7.200} holds.

Let us find connections between $s,p$ and the polynomials $a,b,c,d$.
Solving equations \eqref{eq7.80} and \eqref{eq7.90} for $s$ and $X$ we find that
\be\label{eq8.01}
X =\frac{2 s_0^2 -8p_0}{s_0c-2d}
\ee
and 
\be\label{eq8.02}
s=2\frac{2p_0c-s_0d}{s_0c-2d}.
\ee
Eliminating $s$ from equations \eqref{eq7.60} and \eqref{eq7.70} we deduce that
\be\label{eq7.120}
(8\p0 - 2\s0^2)p = X(2\p0 a - \s0b),
\ee
which implies via equation \eqref{eq8.01} that
\be\label{eq8.03}
p=\frac{-2p_0a+s_0b}{s_0c-2d}.
\ee

Since $h=(s,p)$ is a rational  $\Gamma$-inner function, $|s| \le 2$ on $\d$ and, by Lemma \ref{s-c-d},  equation \eqref{eq7.210} holds. 
Since, by assumption,   $h(\d) \not\subset \royal$, we have $s^2 \neq 4p$ on $\d^-$ and 
inequation \eqref{eq7.220} holds.
The proof of Theorem \ref{thm7.10} is complete.
\qed
\end{proof}
\begin{remark} \rm
The above proof shows that, if $\omega\neq -\bar\eta_1,\dots,-\bar\eta_k$, then $\Phi_\omega\circ h$ is a solution of the corresponding Blaschke interpolation problem.  What if $\omega=-\bar\eta_j$ for some $j\in\{1,\dots,k\}$?  Then the rational function $\Phi_\omega\circ h$ has a removable singularity at $\si_j$.  After cancellation, it is still true (by Proposition \ref{phaser_h})
that $\Phi_\omega\circ h(\si_j)=\eta_j$, but we cannot assert that $A(\Phi_\omega\circ h)(\si_j)=2\rho_j$.  In any case $\Phi_\omega\circ h$ has degree $n-1$, and so is not a solution of Problem \ref{prob}.
\end{remark}

There is a converse to Theorem \ref{thm7.10}.  To prove it we need the following purely algebraic observation, which is proved by a routine calculation.
\begin{prop}\label{purealg}
Let $a,b,c,d$ be polynomials in the indeterminate $\la$ and suppose that $s_0,p_0\in\c$ satisfy $s_0^2\neq 4p_0, \, s_0c\neq 2d$ and
\[
s_0a-2b+2p_0c-s_0d=0.
\]
Let rational functions $s,p$ be defined by
\be\label{8.1}
s=2\frac{2p_0c-s_0d}{s_0c-2d}, \qquad p=\frac{-2p_0a+s_0b}{s_0c-2d}
\ee
and let 
\be\label{defzetao}
\zeta(\omega)= \frac{2\omega p_0-s_0}{2-\omega s_0}.
\ee
Then, as rational functions in $(\omega,\la)$,
\[
\frac{2\omega p(\la)-s(\la)}{2-\omega s(\la)} = \frac{a(\la)\zeta(\omega)+b(\la)}{c(\la)\zeta(\omega)+d(\la)}.
\]
\end{prop}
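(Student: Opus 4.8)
The plan is to verify the claimed identity by brute substitution: replace $s$, $p$ by their defining formulas and $\zeta$ by $\zeta(\omega)$, clear the resulting denominators, and check that the resulting polynomial identity in $(\omega,\la)$ reduces to the hypothesis $s_0a-2b+2p_0c-s_0d=0$. Since the statement asserts equality of \emph{rational} functions, it suffices to prove the identity after multiplying through by any denominators that are not identically zero; the hypothesis $s_0c\neq 2d$ guarantees that $s_0c-2d$ — the common denominator of $s$ and $p$ — is a nonzero polynomial, and $2-\omega s_0$ is a nonzero polynomial in $\omega$, so all the cancellations below are legitimate.

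First I would write $s$ and $p$ over the denominator $s_0c-2d$ and compute
\[
2\omega p-s=\frac{2\big(\omega(s_0b-2p_0a)-(2p_0c-s_0d)\big)}{s_0c-2d},\qquad
2-\omega s=\frac{2\big((s_0c-2d)-\omega(2p_0c-s_0d)\big)}{s_0c-2d}.
\]
The factor $2/(s_0c-2d)$ cancels in the quotient, so the left-hand side of the asserted identity equals
\[
\frac{\omega(s_0b-2p_0a)-(2p_0c-s_0d)}{(s_0c-2d)-\omega(2p_0c-s_0d)}=:\frac{N_1}{D_1}.
\]
On the right-hand side I would substitute $\zeta(\omega)=(2\omega p_0-s_0)/(2-\omega s_0)$ and clear the denominator $2-\omega s_0$ from numerator and denominator, getting
\[
\frac{a(2\omega p_0-s_0)+b(2-\omega s_0)}{c(2\omega p_0-s_0)+d(2-\omega s_0)}=:\frac{N_2}{D_2}.
\]

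The only thing to watch is signs. A direct expansion gives $D_1=-D_2$, so it is enough to show $N_1+N_2=0$; and in the sum the terms $-2\omega p_0a$ and $\omega s_0b$ cancel against their counterparts in $N_2$, leaving exactly
\[
N_1+N_2=-(s_0a-2b+2p_0c-s_0d),
\]
which vanishes by hypothesis. Since $D_2$ has $\omega$-coefficient $2p_0c-s_0d$ and $\omega$-free part $-(s_0c-2d)\neq0$, it is not identically zero, whence $N_1/D_1=(-N_2)/(-D_2)=N_2/D_2$ as rational functions in $(\omega,\la)$, which is the assertion. The remaining hypothesis $s_0^2\neq4p_0$ is not needed for this formal identity, but records that $\omega\mapsto\zeta(\omega)$ is a genuine (nondegenerate) Möbius change of variable, which is the form in which the proposition is applied. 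The whole argument is routine bookkeeping; the only conceivable obstacle is a sign error when clearing the denominators $s_0c-2d$ and $2-\omega s_0$, after which the collapse onto the linear relation $s_0a-2b+2p_0c-s_0d=0$ is immediate.
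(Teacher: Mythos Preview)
Your proof is correct and is precisely the ``routine calculation'' the paper alludes to without writing out; the paper gives no argument beyond that phrase, and your substitution, cancellation of $2/(s_0c-2d)$, and verification that $N_1+N_2=-(s_0a-2b+2p_0c-s_0d)=0$ with $D_1=-D_2$ is exactly what is required. Your remark that $s_0^2\neq 4p_0$ is not needed for the formal identity is also accurate.
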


This algebraic relation has implications for rational maps from $\d$ to $\Ga$.
\begin{prop}\label{p8(2)}
Let $a,b,c,d$ be polynomials having no common zero in $\d^-$ and satisfying $|c|\leq|d|$ on $\d$.  Suppose that $s_0,p_0\in\c$ satisfy $s_0c\neq 2d$ and
\[
s_0a-2b+2p_0c-s_0d=0.
\]
Suppose in addition that $|p_0|=1, \, |s_0| < 2$ and $s_0=\bar s_0 p_0$.
Let rational functions $s,p$ be defined by equations \eqref{8.1} and let
\be\label{leq11}
\psi_\zeta(\la)  = \frac{a(\la)\zeta+b(\la)}{c(\la)\zeta+d(\la)}.
\ee

{\rm (i)} If, for all but finitely many values of $\la\in\d$, 
\be\label{leq1}
\left|\psi_\zeta(\la)\right| \leq 1
\ee
for all but finitely many $\zeta\in\t$, then  $s_0c-2d$ has no zeros in $\d^-$ and $(s,p)$ is a holomorphic map from $\d$ to $\Ga$.

{\rm (ii)} If, for all but finitely many $\zeta\in\t$, the function $\psi_\zeta $  is inner, then $h=(s,p)$ is a rational $\Ga$-inner function. 
\end{prop}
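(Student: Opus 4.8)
The plan is to leverage Proposition~\ref{purealg} to transfer analytic information from the linear-fractional family $\psi_\zeta$ to the pair $(s,p)$. First I would observe that by Proposition~\ref{purealg}, with $\zeta(\omega) = (2\omega p_0 - s_0)/(2 - \omega s_0)$, we have the identity
\[
\frac{2\omega p(\la) - s(\la)}{2 - \omega s(\la)} = \psi_{\zeta(\omega)}(\la)
\]
as rational functions; note that $\omega\mapsto\zeta(\omega)$ is a Möbius map of $\t$ onto $\t$ (using $|p_0|=1$, $|s_0|<2$), so ``all but finitely many $\zeta\in\t$'' corresponds to ``all but finitely many $\omega\in\t$''. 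For part~(i): I want first to rule out zeros of $s_0 c - 2d$ in $\d^-$. Suppose $\la_0\in\d^-$ is such a zero. The hypothesis that $a,b,c,d$ have no common zero in $\d^-$, combined with the relation $s_0 a - 2b + 2p_0 c - s_0 d = 0$ and $s_0^2\neq 4p_0$ (which follows from $|s_0|<2=|2\sqrt{p_0}|$ fails—more carefully, $s_0 = \bar s_0 p_0$ with $|p_0|=1$ gives $|s_0|^2 = s_0\bar s_0 = s_0^2\bar p_0$, so $s_0^2 = |s_0|^2 p_0$, and $s_0^2 = 4p_0$ would force $|s_0|=2$, contradicting $|s_0|<2$), shows that not all of $a(\la_0),b(\la_0),c(\la_0),d(\la_0)$ vanish; then I would argue that $s_0 c - 2d$ vanishing at $\la_0$ forces, via \eqref{8.1} and the boundedness \eqref{leq1}, a contradiction—specifically, if $s_0 c - 2d$ vanishes but the numerators $2p_0 c - s_0 d$ and $-2p_0 a + s_0 b$ do not both vanish there, then $s$ or $p$ blows up at $\la_0$; whereas $|\psi_\zeta|\le 1$ near $\la_0$ for many $\zeta$ forces $\psi_{\zeta(\omega)} = (2\omega p - s)/(2-\omega s)$ to stay bounded, and one extracts from the two-variable identity that $2 - \omega s$ and $2\omega p - s$ have matching zeros, keeping $(s,p)$ finite. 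I would make this precise by checking that $c(\la_0),d(\la_0)$ are not both zero (else $s_0 a - 2b$ vanishes too, and combined with $c=d=0$ one gets all four vanishing), and that $|c|\le|d|$ on $\d$ together with $s_0 c = 2d$ at $\la_0$ forces $|d(\la_0)|\le|c(\la_0)|$, hence $|2 d(\la_0)| = |s_0 c(\la_0)| < 2|c(\la_0)| \le 2|d(\la_0)|$ unless $d(\la_0)=0$, hence $d(\la_0)=c(\la_0)=0$, the excluded case. So $s_0 c - 2d$ has no zero in $\d^-$.

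With $s_0 c - 2d$ zero-free on $\d^-$, equations \eqref{8.1} exhibit $s$ and $p$ as rational functions holomorphic on a neighborhood of $\d^-$, so $(s,p)\in\hol(\d,\c^2)$. To see that $(s,p)$ maps $\d$ into $\Ga$, I would invoke the criterion of Proposition~\ref{prop2.10}: since the universal Carathéodory functions $\Phi_\omega$ detect membership in $\Ga$ (a point $(s',p')$ with $|s'|\le 2$ lies in $\Ga$ iff $|\Phi_\omega(s',p')|\le 1$ for all $\omega\in\t$, by \cite[Corollary 3.4]{AY04} or direct computation), and since $\Phi_\omega(s(\la),p(\la)) = \psi_{\zeta(\omega)}(\la)$ has modulus $\le 1$ for all but finitely many $\omega\in\t$ and all but finitely many $\la\in\d$, a continuity/limiting argument in both variables gives $|\Phi_\omega(s(\la),p(\la))|\le 1$ for all $\omega\in\t$, $\la\in\d$. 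I still need $|s(\la)|\le 2$ on $\d$: this is exactly Lemma~\ref{s-c-d}, whose hypotheses $|p_0|=1$, $s_0 = \bar s_0 p_0$, $s_0 c\neq 2d$, $|s_0|<2$ all hold, giving $|s|\le 2$ on $\d$ from $|c|\le|d|$ on $\d$. Hence $(s(\la),p(\la))\in\Ga$ for all $\la\in\d$, proving~(i).

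For part~(ii): assume $\psi_\zeta$ is inner for all but finitely many $\zeta\in\t$. Then in particular $|\psi_\zeta|\le 1$ on $\d$ for those $\zeta$, so part~(i) applies and $(s,p)$ is a holomorphic map $\d\to\Ga$. By Proposition~\ref{prop2.10} it remains to show $p$ is inner, $|s|\le 2$ on $\d$ (already have this), and $s - \bar s p = 0$ a.e.\ on $\t$. For $p$ inner: $\Phi_\omega(s(\la),p(\la)) = \psi_{\zeta(\omega)}(\la)$ is inner for a.e.\ $\omega\in\t$, and by \cite[Proposition 3.2]{ALY12} $|\Phi_\omega(s(\la),p(\la))|=1$ at a boundary point $\la\in\t$ iff $\omega(s(\la)-\bar s(\la)p(\la)) = 1 - |p(\la)|^2$; as this holds for a.e.\ $\la\in\t$ for infinitely many distinct $\omega$, the only way a linear equation in $\omega$ can hold for more than one value is if both sides vanish, forcing $s(\la) - \bar s(\la)p(\la) = 0$ and $|p(\la)| = 1$ for a.e.\ $\la\in\t$. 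Thus $p$ is inner and the boundary relation holds a.e., so by Proposition~\ref{prop2.10} $h = (s,p)$ is a rational $\Ga$-inner function.

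\textbf{Main obstacle.} The delicate point is part~(i): carefully ruling out zeros of the common denominator $s_0 c - 2d$ in $\d^-$ using only that $a,b,c,d$ have no common zero there and the boundary-type estimates. The algebra linking $s_0 c = 2d$ at a point to the forced vanishing of $c$ and $d$—and hence to the excluded common-zero case—via the inequality $|c|\le|d|$ and $|s_0|<2$ is the crux; the rest is a matter of feeding the one-parameter family $\psi_\zeta$ through the $\Phi_\omega$-criterion and passing to limits in $\omega$ and $\la$.
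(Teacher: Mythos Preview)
Your argument for part~(ii) and for the ``$(s,p)$ maps into $\Ga$'' portion of part~(i) is essentially the paper's, and is fine. The gap is in your claimed proof that $s_0c-2d$ is zero-free on $\d^-$.

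You correctly deduce that if $s_0c(\la_0)=2d(\la_0)$ then, from $|s_0|<2$ and $|c|\le|d|$, one gets $c(\la_0)=d(\la_0)=0$. But your next step is wrong: you say ``else $s_0a-2b$ vanishes too, and combined with $c=d=0$ one gets all four vanishing''. The relation $s_0a-2b+2p_0c-s_0d=0$ at $\la_0$ only yields $s_0a(\la_0)=2b(\la_0)$, a single linear equation in two unknowns. Nothing in your purely algebraic argument forces $a(\la_0)=b(\la_0)=0$; for instance $a(\la_0)=2,\ b(\la_0)=s_0,\ c(\la_0)=d(\la_0)=0$ is consistent with every algebraic hypothesis. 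So ``the excluded case'' has not been reached.

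This is precisely where the analytic hypothesis $|\psi_\zeta|\le1$ must enter, and the paper uses it in an essential way. Having obtained $c(\la_0)=d(\la_0)=0$, take a sequence $\al_j\to\la_0$ with $\al_j$ in the cofinite set $\d\setminus E$; then $c(\al_j)\zeta+d(\al_j)\to 0$ uniformly in $\zeta\in\t$, while $|a(\al_j)\zeta+b(\al_j)|\le|c(\al_j)\zeta+d(\al_j)|$ for all but countably many $\zeta$. Passing to the limit gives $a(\la_0)\zeta+b(\la_0)=0$ for uncountably many $\zeta$, hence $a(\la_0)=b(\la_0)=0$, and \emph{now} you have the contradiction with the no-common-zero hypothesis. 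Your ``Main obstacle'' paragraph even flags this step, but the ``precise'' version you wrote down omits the analytic input that makes it work.
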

\begin{proof}(i)
Notice first that the hypotheses on $s_0$ and $p_0$ imply that $\zeta(\cdot)$ (defined by equation \eqref{defzetao}) is an automorphism of $\d$ and so defines a bijective self-map of $\t$.  

By hypothesis there is a finite subset $E$ of $\d$ such that, for all $\la\in\d\setminus E$, there is a finite subset $F_\la$ of $\t$ such that the inequality \eqref{leq1} holds for all $\zeta\in \t\setminus F_\la$.  

We claim that the denominator $s_0c-2d$ of $s$ and $p$ in equations \eqref{8.1} has no zeros in $\d^-$.  For suppose that $\al$ is such a zero.
Since $|c|\leq |d|$ on $\d^-$ and $|s_0| < 2$,
\begin{align*}
0=|s_0c-2d| &\geq 2|d|-|s_0c| \\
	&\geq  (2-|s_0|)|d|
\end{align*}
at $\al$, and hence $d(\al)=0$, and consequently $c(\al)=0$. 

  Choose a sequence $\al_j$ in $\d\setminus E$ such that $\al_j \to \al$.  For each $j$, for $\zeta\in\t\setminus F(\la_j)$ we have $|\psi_\zeta|\leq 1$ on $\d\setminus E$.  Hence, for all but countably many $\zeta\in\t$ (that is, for $\zeta\in\t\setminus\cup_j F(\la_j)$)
\[
\left| \frac{a(\al_j)\zeta+b(\al_j)}{c(\al_j)\zeta+d(\al_j)}\right| \leq 1.
\]
Since $c(\al_j)\zeta+d(\al_j) \to 0$ uniformly almost everywhere for $\zeta\in\t$ as $j\to \infty$, the same holds for
 $a(\al_j)\zeta+b(\al_j)$. Hence $a(\al_j)\to 0$ and $b(\al_j)\to 0$.  Thus $a(\al)=b(\al)=0$.  Hence $a,b,c,d$ all vanish at $\al$, contrary to hypothesis.  It follows that $s_0c-2d$ has no zeros in $\d^-$.

Thus $s$ and $p$ are rational functions having no poles in $\d^-$.

Consider $\la\in \d\setminus E$.
By Proposition \ref{purealg},
\be\label{ratident}
\Phi_\omega(s(\la),p(\la)) = \frac{a(\la)\zeta(\omega)+b(\la)}{c(\la)\zeta(\omega)+ d(\la)}
\ee
whenever both sides are defined, that is, for all $\omega\in\t\setminus\Omega_\la$ where
\[
\Omega_\la=\{ \omega\in\t: \omega s(\la)= 2\quad \mbox{ or }\quad c(\la) \zeta(\omega)= -d(\la)\}.
\]
$\Omega_\la$ contains at most two points.
On combining the relations \eqref{leq11},  \eqref{leq1} and \eqref{ratident} we deduce that 
\[
\left|\Phi_\omega(s(\la),p(\la))\right| \leq 1
\]
for all $\omega\in\t$ such that $\omega \notin \Omega_\la\cup \zeta^{-1}(F_\la)$, hence for all but finitely many
$\omega \in\t$. By Lemma \ref{s-c-d}, $|s(\la)| \le 2$.  It follows from \cite[Proposition 3.2(2)]{ALY12} that $(s(\la),p(\la))\in\Ga$.  Since this is true for all but finitely many $\la\in\d$ and $s,p$ are rational functions without poles in $\d^-$,  $(s,p)$ maps the whole of $\d^-$ into $\Ga$. 

(ii)  Suppose that, for some finite subset $F$ of $\t$, the function $\psi_\zeta$ is inner for all $\zeta\in\t\setminus F$. 
By Part (i), $(s,p)$ maps $\d$ into $\Ga$ and therefore extends to a continuous map of $\d^-$ into $\Ga$.   Consider $\la\in\t$.  
 By Proposition \ref{purealg} and equation \eqref{leq11},
\be\label{ratident-2}
\Phi_\omega(s(\la),p(\la))= \psi_{\zeta(\omega)}(\la)
\ee
whenever both sides are defined, that is, for all $\omega\in\t\setminus\Omega_\la$ where
\[
\Omega_\la=\{ \omega\in\t: \omega s(\la)= 2\quad \mbox{ or }\quad c(\la) \zeta(\omega)= -d(\la)\}.
\]
$\Omega_\la$ contains at most two points. 
For $\omega\in\t\setminus \zeta^{-1}(F)$ the function $\psi_{\zeta(\omega)}$ is inner.  Hence, for $\omega\in\t\setminus (\zeta^{-1}(F)\cup \Omega_\la)$,
\be\label{mod1}
|\Phi_\omega(s(\la),p(\la))| = |\psi_{\zeta(\omega)}(\la)| =1.
\ee
 \cite[Proposition 3.2]{ALY12} asserts that, for any $\omega\in\T$ and any point $(s_1,p_1)\in\Gamma$,
\[
|\Phi_\omega(s_1,p_1)| = 1 \quad\mbox{ if and only if }\quad\omega(s_1-\bar s_1 p_1)= 1-|p_1|^2.
\]
Hence, if  $|\Phi_\omega(s_1,p_1)| = 1$ for two distinct $\omega\in\t$, then 
$|p_1|=1$ and  $s_1= \bar{s_1} p_1$, which is to say that $(s_1,p_1)$ is in the distinguished boundary $\M$ of $\gam$.
Therefore, since equation \eqref{mod1} holds for many $\omega\in\t$,  $(s(\la), p(\la))\in\M$. 
Thus $h=(s,p)$ is a rational $\Ga$-inner function. 
\qed
\end{proof}
%%%%%%%%%%%%%%%%%%%%%%%%%%%%%%%%%%%%%%%%%%%

The following result gives the promised explicit construction of a solution of the royal $\Ga$-interpolation problem in terms of a normalized parametrization of solutions of the corresponding Blaschke interpolation problem.
\begin{theorem}\label{thm7.20}
Let   $(\si,\eta,\rho)$ be  Blaschke interpolation data with $n$ distinct interpolation nodes of which $k$ lie in $\t$, as in Definition {\rm \ref{blaschkedata}}.   Suppose that 
 Problem {\rm \ref{prob}} with these data is solvable and 
the solutions $\ph$ of Problem {\rm \ref{prob}}  have normalized parametrization
\[
\ph= \frac{a\zeta+b}{c\zeta+d}.
\]
Suppose that there exist scalars $\s0$ and $\p0$ such that 
\be\label{s0p0bGa}
|\p0| =1, \;\s0 = \overline{\s0}\p0,\; |s_0|<2
\ee
 and
\be\label{eq1.801}
 \s0 a - 2b + 2\p0 c - \s0 d = 0.
\ee
Then there exists a rational $\Ga$-inner function $h=(s,p)$ such that

{\rm (i)}  
%either $h(\d)\subset \mathcal{R}$ and $\deg(h)\le n$ or 
$h \in \rnk$,

{\rm (ii)} $h(\sigma_j) =(-2\eta_j, \eta^2_j)$ for  $j=1,2,\ldots,n$,
%with royal nodes at the points $\sigma_1, \sigma_2,\ldots, \sigma_n$, corresponding royal values $\eta_1, \eta_2,\ldots, \eta_n$ and 

{\rm (iii)} $  Ap(\sigma_j)=2\rho_j$ for $j=1,2,\ldots,k$.  

{\rm (iv)}  for all  but finitely many $\omega \in \t$, the function  $ \Phi_\omega\circ h$  is a solution of Problem {\rm \ref{prob}}.

An explicit function $h=(s,p)$ satisfying conditions {\rm (i)-(iv)} is given by
\begin{align}
\label{eq7.110_2}
s&=2\frac{2\p0 c - \s0 d}{\s0 c - 2d},\\
\label{eq7.130_2}
p&=\frac{-2\p0 a + \s0 b}{\s0 c - 2d}.
\end{align}
%Moreover if $s^2 \neq 4p$ on $\d^-$ then $h \in \rnk$ and $\deg(h)= n$.
\end{theorem}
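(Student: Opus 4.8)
The plan is to take the explicit pair $(s,p)$ defined by \eqref{eq7.110_2}--\eqref{eq7.130_2} and verify (i)--(iv) by feeding the hypotheses into Propositions \ref{purealg} and \ref{p8(2)}. First I would check that $(a,b,c,d,s_0,p_0)$ satisfy the hypotheses of Proposition \ref{p8(2)}. Since $(a,b,c,d)$ comes from a normalized parametrization, Corollary \ref{norm-lin-param} supplies that $a,b,c,d$ have no common zero in $\C$ (a fortiori none in $\d^-$) and that $|c|\le|d|$ on $\d^-$; the normalization $c(\tau)=0$, $d(\tau)=1$ at the base point forces $s_0c-2d\not\equiv 0$; and $|p_0|=1$, $|s_0|<2$, $s_0=\overline{s_0}p_0$ together with \eqref{eq1.801} are the remaining hypotheses. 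For all but at most $k$ values of $\zeta\in\t$ the function $\psi_\zeta=(a\zeta+b)/(c\zeta+d)$ is a solution of Problem \ref{prob}, hence a finite Blaschke product, hence inner; so Proposition \ref{p8(2)}(ii) gives at once that $h=(s,p)$ is a rational $\Ga$-inner function, while Proposition \ref{p8(2)}(i) gives that $s_0c-2d$ has no zeros in $\d^-$, so $s$ and $p$ are pole-free there.

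Next I would obtain (iv) and (ii). Since $s_0^2\neq 4p_0$ (because $|s_0|<2$) and $s_0c\neq 2d$, Proposition \ref{purealg} identifies $\Phi_\omega\circ h$ with $\psi_{\zeta(\omega)}$ as rational functions, $\zeta(\omega)$ being given by \eqref{defzetao}; as recorded in the proof of Proposition \ref{p8(2)}, $\omega\mapsto\zeta(\omega)$ is a bijection of $\t$ onto itself, so for all but finitely many $\omega\in\t$ the value $\zeta(\omega)$ avoids the (at most $k$-point) exceptional set and $\Phi_\omega\circ h=\psi_{\zeta(\omega)}$ solves Problem \ref{prob}; this is (iv). For (ii) I would evaluate at $\sigma_j$: for all but finitely many $\zeta$ the identity $a(\sigma_j)\zeta+b(\sigma_j)=\eta_j(c(\sigma_j)\zeta+d(\sigma_j))$ holds, hence it holds identically, so $a(\sigma_j)=\eta_jc(\sigma_j)$ and $b(\sigma_j)=\eta_jd(\sigma_j)$; substituting into \eqref{eq7.130_2} gives $p(\sigma_j)=-\tfrac12\eta_j\,s(\sigma_j)$, while \eqref{eq1.801} evaluated at $\sigma_j$ reads $(2p_0+\eta_js_0)c(\sigma_j)=(s_0+2\eta_j)d(\sigma_j)$, which is precisely the relation that turns \eqref{eq7.110_2} into $s(\sigma_j)=-2\eta_j$; hence $p(\sigma_j)=\eta_j^2$ and $h(\sigma_j)=(-2\eta_j,\eta_j^2)$. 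All these evaluations are legitimate because $s_0c-2d$ is nonvanishing on $\d^-$, which contains $\sigma_1,\dots,\sigma_n$.

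For (iii), fix $j\in\{1,\dots,k\}$; then $\sigma_j,\eta_j\in\t$, so by (ii) the point $\sigma_j$ is a royal node of $h$ on $\t$ with royal value $\eta_j$. Choose $\omega\in\t$ outside the finitely many exceptional values and with $\omega\neq-\overline{\eta_j}$. On one hand $\Phi_\omega\circ h$ solves Problem \ref{prob}, so $A(\Phi_\omega\circ h)(\sigma_j)=\rho_j$ by \eqref{eq6.4}; on the other, Proposition \ref{phaser_h}(iv) applied with the (constant) Blaschke product $\up\equiv\omega$ gives $A(\Phi_\omega\circ h)(\sigma_j)=\tfrac12 Ap(\sigma_j)$. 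Comparing, $Ap(\sigma_j)=2\rho_j$.

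The remaining and, I expect, main obstacle is (i). I would first note that $h(\d)\not\subset\royal$: otherwise $h=(2f,f^2)$ for some inner $f$ and $\Phi_\omega\circ h\equiv -f$ would be independent of $\omega$, contradicting $\Phi_\omega\circ h=\psi_{\zeta(\omega)}$ (the right side is a nonconstant function of $\omega$, since $\zeta(\cdot)$ is nonconstant and $ad-bc\not\equiv 0$ by Corollary \ref{norm-lin-param}). Hence $s^2-4p\not\equiv 0$ and the type of $h$ is well defined. By (ii) the points $\sigma_1,\dots,\sigma_n$ are distinct royal nodes of $h$, and by Proposition \ref{phaser_h}(ii) each $\sigma_j\in\t$ ($j\le k$) is a zero of $s^2-4p$ of order at least $2$; hence the type $(n',k')$ of $h$ satisfies $n'\ge n$, $k'\ge k$, with $\sum_{\sigma\in\d}\#\sigma\ge n-k$ and $\sum_{\sigma\in\t}\#\sigma\ge k$. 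By the zero-counting theorem \cite[Theorem 3.8]{ALY14}, $\deg(h)=n'=\deg(p)$. But by (iv) a generic $\Phi_\omega\circ h$ solves Problem \ref{prob}, so has degree exactly $n$, while the degree bookkeeping used in the proof of Theorem \ref{thm7.10} (equation \eqref{degrees} together with \cite{ALY12,ALY14}) shows $\deg(\Phi_\omega\circ h)=\deg(p)$ for all but finitely many $\omega$; hence $\deg(p)=n$, so $n'=n$, and then $\sum_{\sigma\in\d}\#\sigma+\sum_{\sigma\in\t}\#\sigma=n$ together with the lower bounds forces $\sum_{\sigma\in\t}\#\sigma=k$, i.e.\ $k'=k$. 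Therefore $h\in\rnk$, and the construction \eqref{eq7.110_2}--\eqref{eq7.130_2} satisfies all of (i)--(iv).
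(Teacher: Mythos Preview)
Your proposal is correct and follows essentially the same approach as the paper: Corollary \ref{norm-lin-param} and Proposition \ref{p8(2)} give that $h$ is rational $\Gamma$-inner, Proposition \ref{purealg} identifies $\Phi_\omega\circ h$ with $\psi_{\zeta(\omega)}$ for (iv) and (ii), Proposition \ref{phaser_h}(iv) gives (iii), and the royal-node count from \cite[Theorem 3.8]{ALY14} gives (i). The only tactical differences are that the paper shows $h(\d)\not\subset\royal$ by the one-line evaluation $h(\tau)=(s_0,p_0)$ (rather than your contradiction argument) and bounds $\deg(p)\le n$ directly from the polynomial degrees of $a,b,c,d$ (rather than via $\deg(\Phi_\omega\circ h)=\deg(p)$), which slightly streamlines the proof of (i).
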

\begin{proof} By Corollary \ref{norm-lin-param} (3),
 $|c| \le |d|$ on $ \d^- $. Hence $\left|\frac{d(\lambda)}{c(\lambda)} \right| \ge 1$ for $\lambda \in \d^-$.
By assumption $|s_0/2|<1$, and therefore $s_0c\neq 2d$ on $ \d^- $. By Proposition \ref{p8(2)},
 $h$ is a rational $\Ga$-inner function.  Since $a,b,c,d$ are polynomials of degree at most $n$, the rational function $h$ has degree at most $n$. Recall that the degree of $h$ coincides with the degree of $p$.

By Definition \ref{deflfparam} of a  normalised linear fractional parametrization of the solutions of Problem \ref{prob}, for some point $\tau\in\t\setminus \{\si_1,\dots,\si_k\}$,
\[
\bbm a(\tau)&b(\tau) \\ c(\tau)&d(\tau) \ebm  = \bbm 1 & 0\\0 & 1 \ebm.
\]
Thus it is easy to see that
\begin{align}
\label{eq7.110}
s(\tau)&=2\frac{2\p0 c(\tau) - \s0 d(\tau)}{\s0 c(\tau) - 2d(\tau)} =s_0,\\
\label{eq7.130}
p(\tau)&=\frac{-2\p0 a(\tau) + \s0 b(\tau)}{\s0 c(\tau) - 2d(\tau)}= p_0.
\end{align}
By assumption, $|\p0| =1$ and $|s_0|<2$, and hence $s(\tau)^2 \neq 4p(\tau)$.
Therefore $h(\d^-)$ is not in the royal variety $ \mathcal{R}$.

Let us show that $h$ satisfies the interpolation conditions
\be\label{hinterp}
h(\si_j)=(-2\eta_j, \eta_j^2)
\ee
for $j=1,\dots, n$, which is to say that $\si_j$ is a royal node of $h$ with corresponding royal value $\eta_j$. 
By hypothesis, there is a finite set $F\subset\t$ such that, for all $\zeta\in\t\setminus F$, the function
\[
\ph(\la)=\psi_{\zeta}(\la)\df  \frac{a(\la)\zeta+b(\la)}{c(\la)\zeta+d(\la)}
\]
is a solution of Problem \ref{prob}, and so
\be\label{psizo}
\psi_{\zeta}(\si_j) = \eta_j \quad \mbox{ for }j=1,\dots,n
\ee
and
\be\label{Apsizo}
A\psi_{\zeta}(\si_j) = \rho_j \quad \mbox{ for }j=1,\dots,k
\ee
for all $\zeta\in\t\setminus F$.
By Proposition \ref{purealg}
\be\label{lasteq}
 \psi_{\zeta(\omega)}(\la)=  \frac{a(\la)\zeta(\omega)+b(\la)}{c(\la)\zeta(\omega)+d(\la)} 
= \frac{2\omega p(\la)-s(\la)}{2-\omega s(\la)} =\Phi_\omega\circ h(\la)
\ee
as rational functions in $(\omega,\la)$, where $\zeta(\omega)= \frac{2\omega p_0-s_0}{2-\omega s_0}$.  
Hence, for $\omega\in\t\setminus \zeta^{-1}(F)$, $\Phi_\omega\circ h$ is a solution of Problem \ref{prob}; this proves statement (iv). 

For any $\la\in\d^-$ equation \eqref{lasteq} holds whenever both denominators are nonzero, hence for all but at most two values of $\omega\in\t$.  On combining equations \eqref{psizo} and \eqref{lasteq} (with $\la=\si_j$) we infer that, for $j=1,\dots,n$ and for all but finitely many $\omega\in\t$, 
\[
\frac{2\omega p(\si_j)-s(\si_j)}{2-\omega s(\si_j)} =\psi_{\zeta(\omega)}(\si_j)= \eta_j.
\]
Therefore, for almost all $\omega \in \t$, 
\[
2\omega p(\si_j)-s(\si_j)= \eta_j(2-\omega s(\si_j)).
\]
It follows that $ s(\si_j)= -2 \eta_j$ and $p(\si_j)= \eta_j^2$, $ j =1,2, \dots,n$, and so the interpolation conditions \eqref{hinterp} hold.

We have already observed that  $\deg(h)\le n$ and  that $h(\d)$ is not in  $\mathcal{R}$. Thus \cite[Theorem 3.8]{ALY14} tells us that, in this case, the number of royal nodes of $h$ is equal to the degree of $h$. Therefore $h$ has at most $n$ royal nodes.   Since
the points $\si_j$, $j =1,2, \dots, n$ {\em are} royal nodes, they comprise {\em all} the royal nodes of $h$ and $\deg(h)= n$.  Precisely $k$ of the $\si_j$ lie in $\t$, and so $h$ has exactly $k$ royal nodes in $\t$.  Thus $h\in\rnk$ and statement (i) holds.

Next we show that $Ap(\si_j)=2\rho_j$.  Fix $j\in\{1,\dots,k\}$.
By Proposition \ref{phaser_h}(iv),
 for  $\omega\in\t, \, \omega\neq -\bar\eta_j$  (and so
$2-\omega s(\si_j)= 2(1+\omega \eta_j) \neq 0$),
\be\label{first}
A(\Phi_\omega\circ h)(\si_j) = \half Ap(\si_j).
\ee
There is also a set $\Omega_j$ containing at most one $\omega_j\in\t$ such that $c(\si_j)\zeta(\omega)+d(\si_j) =0$ for $\omega  \in \Omega_j$.  Hence if $\omega\in\t\setminus  (\{-\bar \eta_j\} \cup  \Omega_j)$, it follows from equation \eqref{lasteq} that $\psi_{\zeta(\omega)}= \Phi_\omega\circ h$ in a neighbourhood of $\si_j$, and consequently, for such $\omega$,
\be\label{third}
A\psi_{\zeta(\omega)}(\si_j)= A(\Phi_\omega \circ h)(\si_j).
\ee
Each of the equations \eqref{first},  \eqref{third} and \eqref{Apsizo} hold for $\omega$ in a cofinite subset of $\t$. Hence, for $\omega$ in the intersection of these cofinite subsets,
\[
Ap(\si_j) = 2A(\Phi_\omega\circ h)(\si_j)= 2A\psi_{\zeta(\omega)}(\si_j) = 2\rho_j
\]
as required.   \qed
\end{proof}
\begin{remark}  {\rm Under the assumptions of Theorem {\rm \ref{thm7.20}}, 
the condition \eqref{eq7.220}:
\be
(2p_0c- s_0d)^2 \neq (-2p_0a+s_0b)(s_0c -2d)
\ee
is satisfied automatically and the
rational $\Ga$-inner function $h$ is such that 
$h(\d^-)$ is not in the royal variety $ \mathcal{R}$.
}
\end{remark}

%%%%%%%%%%%%%%%%
%

\begin{remark} \label{every_solution} {\rm {\em Every} solution of a royal $\Ga$-interpolation problem is obtainable by the method in the theorem.
Let data $(\si,\eta,\rho)$ be as in Theorem {\rm \ref{thm7.20}}. 
Suppose that Problem {\rm \ref{prob}} with these data is solvable and 
the solutions $\ph$ of Problem {\rm \ref{prob}}  have normalized parametrization
\[
\ph= \frac{a\zeta+b}{c\zeta+d}.
\]
By Theorem \ref{thm7.10}, 
every rational $\Ga$-inner function $h=(s,p)\in \rnk$ satisfying

{\rm (i)} $h(\sigma_j) =(-2\eta_j, \eta^2_j)$ for  $j=1,2,\ldots,n$,

{\rm (ii)} $  Ap(\sigma_j)=2\rho_j$ for $j=1,2,\ldots,k$\\
is expressible by the equations
\begin{align}
\label{eq7.110bis}
s&=2\frac{2\p0 c - \s0 d}{\s0 c - 2d},\\
\label{eq7.130bis}
p&=\frac{-2\p0 a + \s0 b}{\s0 c - 2d}
\end{align}
for some choice of $s_0, p_0$ satisfying conditions \eqref{s0p0bGa} and \eqref{eq1.801}.
}
\end{remark}

\begin{example} {\rm   Consider $3$ distinct points $\sigma_1,\si_2,\si_3 \in \t $ and $3$ distinct points $\eta_1,\eta_2,\eta_3 \in \t $  in the same cyclic order as $\sigma_1, \sigma_2, \sigma_3$.  There is a Blaschke factor $\ph $ such that
$\ph(\sigma_j) = \eta_j$ for $j =1,2,3$; let $\rho_j = A\ph(\si_j) $ for $j =1,2,3$.    Problem \ref{prob} with data $(\si,\eta,\rho)$ is solvable and $\ph$ is a solution.  Let $h =(-2\ph, \ph^2)$; then  $h(\d)\subset \mathcal{R}$. 
Every point of $\d^-$ is a royal node of $h$; in particular, $h$  has the $3$ distinct royal nodes $\si_1,\si_2,\si_3 \in \t$ with corresponding royal values $\eta_1,\eta_2,\eta_3 \in \t$, and
\[
Ap(\si_j) = A \ph^2(\si_j)=2A \ph(\si_j)=2\rho_j, \; \qquad j =1,2,3.
\]
At the same time $\deg(h)=2$. 
The example shows that for the rational $\Gamma$-inner functions whose range is contained in  $\mathcal{R}$, it can happen that $\deg(h)$ is strictly less than $n$.
}
\end{example}

%%%%%%%%%%%%%%%%%%%%%%%%%%
\section{The algorithm}\label{algorithm}
In this section we summarize the steps in the solution of the royal $\Ga$-interpolation problem in the form of a concrete algorithm.

We suppose given Blaschke interpolation data $(\si,\eta,\rho)$ as in Definition \ref{blaschkedata}.  Here there are $n$ prescribed royal nodes $\si_j$, of which the first $k$ lie in $\t$ and the remaining $n-k$ are in $\d$.
To construct a rational $\Ga$-inner function or functions of degree $n$ having royal nodes $\si_j$, royal values $\eta_j$ and phasar derivatives $2\rho_j$ at $\si_j$ we proceed as follows.

\noindent (1) Form the Pick matrix $M=[m_{ij}]_{i,j=1}^n$ for the data $(\si,\eta,\rho)$, with entries
$$
m_{ij} = \twopartdef{\rho_i}{\mbox{ if }i=j\leq k}{\ds \frac{1-\overline{\eta_i}\eta_j}{1-\overline{\sigma_i}\sigma_j}}{\mbox{ otherwise}.}
$$
If $M$ is not positive definite then the interpolation problem is not solvable.  Otherwise, introduce the notation
\be\label{x-y-lambdabis}
x_\lambda =\begin{bmatrix}\frac{1}{1-\overline{\sigma_1}\lambda}\\ \vdots\\ \frac{1}{1-\overline{\sigma_n}\lambda}\end{bmatrix}, \qquad y_\lambda =\begin{bmatrix}\frac{\overline{\eta_1}}{1-\overline{\sigma_1}\lambda}\\ \vdots\\ \frac{\overline{\eta_n}}{1-\overline{\sigma_n}\lambda}\end{bmatrix},
\ee
as in equations \eqref{x-y-lambda}.

\noindent (2) Choose a point $\tau\in\t\setminus\{\si_1,\ldots,\si_k\}$ such that the set of $\zeta \in\t$ for which 
\[
\ip{M^{-1}x_\tau}{e_j} = \zeta\ip{M^{-1}y_\tau}{e_j} \quad\mbox{for some }j\in \{1,\ldots,n\}
\]
(where $e_j$ is the $j$th standard basis vector in $\c^n$) is finite. 

\noindent (3)  Find $s_0,p_0\in\c$ such that $|s_0|<2,\, |p_0|=1, \, s_0=\bar s_0 p_0$ and, for all $\la\in\d$,
\be\label{s0p0}
s_0\left(\ip{x_\la}{M^{-1}x_\tau}+\ip{y_\la}{M^{-1}y_\tau}\right)+2\ip{x_\la}{M^{-1}y_\tau}+2p_0\ip{y_\la}{M^{-1}x_\tau}=0.
\ee
If there is no pair $(s_0,p_0)$ satisfying these conditions, then the interpolation problem is not solvable; otherwise

\noindent (4) Let
\[
g(\la)=\prod_{j=1}^n\frac{1-\bar\si_j\la}{1-\bar\si_j\tau}
\]
and let polynomials $a,b,c,d$ be given by
\begin{align*}
a(\la)&= g(\la)\left(1-(1-\bar\tau\la)\ip{x_\la}{M^{-1}x_\tau}\right), \\
b(\la)&= g(\la) (1-\bar\tau\la)\ip{x_\la}{M^{-1}y_\tau},\\
c(\la)&= -g(\la)(1-\bar\tau\la)\ip{y_\la}{M^{-1}x_\tau},\\
d(\la)&= g(\la)\left(1+(1-\bar\tau\la)\ip{y_\la}{M^{-1}y_\tau}\right).
\end{align*} 
Note that
\[
\bbm a(\tau)&b(\tau) \\ c(\tau)&d(\tau) \ebm  = \bbm 1 & 0\\0 & 1 \ebm.
\]

\noindent (5)
Let
\begin{align*}
s&=2\frac{2\p0 c - \s0 d}{\s0 c - 2d},\\
p&=\frac{-2\p0 a + \s0 b}{\s0 c - 2d}.
\end{align*}
It is easy to see that
$$
s(\tau)= s_0 \;\; \text{and}\;\; p(\tau)= p_0.
$$
Then $h=(s,p)$ is a rational $\Ga$-inner function of degree at most $n$ such that $h(\si_j)=(-2\eta_j,\eta_j^2)$  for $j=1,\dots,n$ and $Ap(\si_j)=2\rho_j$ for $j=1,\dots,k$. By assumption, $|\p0| =1$ and $|s_0|<2$, and  hence $s(\tau)^2 \neq 4p(\tau)$.
Therefore $h(\d^-)$ is not in the royal variety $ \mathcal{R}$ and the degree of $h$ is exactly $n$.

The following comments relate the steps of the algorithm to results in the paper.

\noindent (1)  If the royal $\Ga$-interpolation problem is solvable, then the Blaschke interpolation problem with the same data is solvable, by Theorem \ref{thm7.10}.  By Proposition \ref{M>0}, $M>0$.

\noindent (2) This amounts to saying that $Z_\tau$ is finite, in the notation of equation \eqref{defZtau}. By Proposition \ref{Ztau}, there are uncountably many such $\tau\in\t$.

\noindent (3)  The necessity of the existence of $s_0,p_0$ is given in Theorem \ref{thm7.10} equation \eqref{eq7.200}, together with the equations \eqref{eq6.90} to \eqref{abcd} for $a,b,c$ and $d$.

The conditions that $|s_0|<2,\, |p_0|=1$ and $ s_0=\bar s_0 p_0$ are equivalent to $(s_0,p_0)\in b\Ga$ and $|s_0|<2$. By a standard parametrization of $b\Ga$ \cite[Theorem 2.4]{AY04}, we can take $s_0=2t \omega, \, p_0=\omega^2$ for some $t\in(-1,1)$ and $\omega\in\t$.  The condition \eqref{s0p0} then becomes: for all $\la\in\d$,
\be\label{stringent}
\ip{y_\la}{M^{-1}x_\tau}\omega^2 +t\left(\ip{x_\la}{M^{-1}x_\tau}+\ip{y_\la}{M^{-1}y_\tau}\right)\omega+\ip{x_\la}{M^{-1}y_\tau} =0.
\ee
After multiplication of both sides by $\prod_{j=1}^n (1-\bar\si_j\la)$, the coefficients in this equation relating $t$ and $\omega$ become polynomials in $\la$ of degree at most $n$, and so the equation is in effect a system of $2n+2$ real equations in two real variables.  Consequently the system is over-determined.  The existence of $s_0,p_0$ satisfying equations \eqref{s0p0} is thus in principle a stringent condition for the solvablility of a royal $\Ga$-interpolation problem.
Remarkably, in the two examples in the next section, the $\la$ terms factor out entirely from equation \eqref{stringent}, and one obtains a single real equation in $t$ and $\omega$, which has a $1$-parameter family of solutions.

\noindent (4)  The equations for $a,b,c$ and $d$ are equations \eqref{eq6.90} to \eqref{abcd}.

\noindent (5)
The equations for $s$ and $p$ are \eqref{eq7.110} and \eqref{eq7.130}.

%%%%%%%%%%%%%%%%%%%%%%%%%%%%%%%%%%%%%%%%%
\section{Two examples}\label{examples}
Even the simplest case of Problem \ref{royalinterp}, the royal $\Ga$-interpolation problem with only one interpolation node, demands a surprising amount of calculation to solve.  This problem is so simple that it can be readily solved without the foregoing theory, but it is instructive to see how the algorithm in Section \ref{algorithm} works in this case.
\begin{example}\label{simplest} \rm
Consider the case $n=1,\, k=0$ of Problem \ref{royalinterp}. There are prescribed a single royal node $\si_1\in\d$ and a single royal value $\eta\in\d$, and we seek a $\Ga$-inner function $h$ of degree $1$ such that $h(\si_1)=(-2\eta,\eta^2)$.  By composition with an automorphism of $\d$ we may reduce to the case that $\si_1=0$.  There is clearly at least a $1$-parameter family of solutions, if any, since if $h$ is a solution then so is $h(\omega\la)$ for any $\omega\in\t$.

The recipe for $h$ in Section \ref{sec7} proceeds as follows.  Choose an arbitrary $\tau\in\t$. 
The normalized parametrization of the solution set of the associated Blaschke interpolation problem, according to equations \eqref{abcd}, is given by
\begin{align*}
a_\tau(\la) &= \frac{\bar\tau\la-|\eta|^2}{1-|\eta|^2}, \\
b_\tau(\la)&= \frac{\eta(1-\bar\tau\la)}{1-|\eta|^2}, \\
c_\tau(\la) &= -\frac{\bar\eta(1-\bar\tau\la)}{1-|\eta|^2}, \\
d_\tau(\la) &=\frac{1-|\eta|^2\bar\tau\la}{1-|\eta|^2}. 
\end{align*}
The next step is to determine whether there exist $s_0,p_0$ such that equations \eqref{eq7.170} to \eqref{eq7.220} hold.  A little calculation shows that there is a $1$-parameter family of such $(s_0,p_0)$, given by
\[
s_0= -\frac{4\omega \re (\bar\omega\eta)}{1+|\eta|^2}, \qquad p_0=\omega^2
\]
for any $\omega\in\t$.  Substitution of these values into equations \eqref{eq7.110} and \eqref{eq7.130} yields the degree $1$ $\Ga$-inner function
\[
h(\la)= \left(-2\frac{\eta+\bar\eta\kappa\la}{1+\bar\eta^2\kappa\la}, \frac{\kappa\la+\eta^2}{1+\bar\eta^2\kappa\la}\right)
\]
where 
\[
\kappa=\bar\tau\frac{\omega^2-\eta^2}{1-\bar\eta^2\omega^2}.
\]
$\kappa$ is a general point of $\t$, and so we do obtain a $1$-parameter family of $\Ga$-inner functions of degree $1$ satisfying $h(0)=(-2\eta,\eta^2)$.  An alternative expression for $h$ is
\[
h(\la)=(\beta+\bar\beta p(\la),p(\la))
\]
where
\[
\beta= -\frac{2\eta}{1+|\eta|^2}, \qquad p(\la)=\frac{\kappa\la+\eta^2}{1+\bar\eta^2\kappa\la}.
\]
\end{example}
\begin{example}\label{n=k=1} \rm
Next consider the case of a single interpolation node on the unit circle -- say $\si=1$.  A point $\eta\in\t$ and a $\rho>0$ are prescribed, and we seek a $\Ga$-inner function $h=(s,p)$ of degree $1$ such that $h(1)=(-2\eta,\eta^2)$ and $Ap(1)=2\rho$.

Choose $\tau\in\t\setminus\{1\}$.
Again calculate the normalized parametrization of the solution set of the associated Blaschke interpolation problem according to equations \eqref{abcd}:
\begin{align*}
a_\tau(\la) &= \frac{1-\la}{1-\tau}-\frac{1-\bar\tau\la}{\rho|1-\tau|^2}, \\
b_\tau(\la)&= \frac{\eta(1-\bar\tau\la)}{\rho|1-\tau|^2}, \\
c_\tau(\la) &= -\frac{\bar\eta(1-\bar\tau\la)}{\rho|1-\tau|^2}, \\
d_\tau(\la) &=\frac{1-\bar\tau\la}{\rho|1-\tau|^2}+ \frac{1-\la}{1-\tau}.
\end{align*}
Equations \eqref{eq7.170} to \eqref{eq7.220} for $(s_0,p_0)$ have solution
\[
s_0=-\eta-\omega^2\bar\eta, \qquad p_0= \omega^2
\]
for any $\omega\in\t\setminus\{\eta\}$. 
Then equations \eqref{eq7.110} and \eqref{eq7.130} yield the degree $1$ $\Ga$-inner function
$h=(s,p)$ where
\begin{align} \label{gotsandp}
s(\la)&= 2\frac{\rho(\eta+\omega^2\bar\eta)(1-\bar\tau)(1-\la)+(\eta-\omega^2\bar\eta)(1-\bar\tau\la)}{-2\rho(1-\bar\tau)(1-\la)+(\omega^2\bar\eta^2-1)(1-\bar\tau\la)},  \notag  \\
p(\la)&= \frac{-2\omega^2\rho(1-\bar\tau)(1-\la)+(\omega^2-\eta^2)(1-\bar\tau\la)}{-2\rho(1-\bar\tau)(1-\la)+(\omega^2\bar\eta^2-1)(1-\bar\tau\la)}.
\end{align}
One can check directly that $h=(s,p)$ is a $\Ga$-inner function of degree $1$ satisfying $h(1)=(-2\eta,\eta^2)$ and $Ap(1)=2\rho$.  It appears at first sight that we have constructed a $2$-parameter family of functions with the prescribed royal node, value and phasar derivative, since the parameters $\omega$ and $\tau$ range through $\t$ (or at least, cofinite subsets thereof).  However, by means of some entertaining algebra, one can express $h$ in terms of a single unimodular parameter (the same thing happened, though more simply, in Example \ref{simplest}).
Let
\[
\kappa=\tau\frac{2\rho(1-\bar\tau)\omega-\bar\tau(\omega-\bar\omega)}{2\rho(1-\tau)\bar\omega-\tau(\bar\omega-\omega)}.
\]
Clearly $\kappa$ is unimodular.  Now let
\[
\al(\kappa)= \frac{2\rho-\bar\kappa}{1+2\rho}.
\]
It transpires that $|\al(\kappa)|<1$ and
\[
\al(\kappa)= \frac{2\rho(1-\bar\tau)-1+\bar\omega^2}{2\rho(1-\bar\tau)-\bar\tau(1-\bar\omega^2)}.
\]
One may verify that the functions $s$ and $p$ in equations \eqref{gotsandp} can be written
\begin{align} \label{againsp}
s(\la)&=-\eta-\bar\eta p(\la), \notag\\
p(\la) &= \eta^2\kappa  \frac{\la-\al(\kappa)}{1-\overline{\al(\kappa)}\la},
\end{align}
with $\kappa\in\t$, evidently a $1$-parameter family.

It is noteworthy that the function $h=(s,p)$ defined by equations \eqref{againsp} maps $\d$ into the disc $\{(\eta+\bar\eta z,z):z\in\d)\}$, which is a subset of the topological boundary $\partial\Ga$ of $\Ga$.  Inner functions $h$ such that $h(\d)\subset \partial\Ga$ were called {\em superficial} in \cite{ALY12}  and discussed in \cite[Proposition 8.3]{ALY12}.  The example shows that the solutions of a royal $\Ga$-interpolation problem can be superficial.
\end{example}
%%%%%%%%%%%%%%%%%%%%%%

\section{Concluding remarks}\label{conclude}
In this section we relate the results of the paper to some classical results in the theory of invariant distances and thereby describe some of the original motivation for our work.

The algorithm which is developed in this paper provides constructions of 
$n$-extremal maps and $m$-geodesics in  $ \hol(\d,\G)$ with prescribed  royal nodes $\si_j$, royal values $\eta_j$ and phasar derivatives at $\si_j$. The $n$-extremal maps simultaneously generalize both Blaschke products and complex geodesics and constitute a significant class.

Recall that for a domain $G$ in $\c^N$ the {\em Carath\'eodory distance} $C_G$ on $G$ is defined by
\be\label{eq1.22}
C_G(z_1,z_2) = \sup_{F \in \hol(G,\d)} \rho(F(z_1),F(z_2)).
\ee
In equation \eqref{eq1.22}~ $z_1$ and $z_2$ are two points in $G$, $\rho$ denotes the pseudohyperbolic distance  on $\d$,
\[
\rho(z,w) = \left| \frac{z-w}{1- \bar{w} z} \right|.
\]
and, for any domain $G$ and any set  $E$,  $\hol(G,E)$ denotes the space of holomorphic mappings from $G$ to $E$.
A dual notion is the {\em Kobayashi distance} of $G$, which is defined to be the largest pseudodistance $K_G$ subordinate to the Lempert function $\rho_G$ of $G$ (e.g. \cite{Le86,Di89}). The {\em Lempert function} of $G$  is given by
\be\label{eq1.24}
\rho_G(z_1,z_2) = \inf_{\substack{h \in \hol(\d,G)\ \lambda_1,\lambda_2 \in \d\\ h(\lambda_1)=z_1\\ h(\lambda_2)=z_2}} \rho(\lambda_1,\lambda_2).
\ee

The {\em Kobayashi extremal problem} for a pair of points $z_1, z_2 \in  G$ is to find the quantity
$\rho_G(z_1,z_2)$ (\cite{Ko98}).
Any  function $h\in \hol(\d,G)$ for which the infimum
is attained is called a {\em Kobayashi extremal function} for the domain $G$ and the points $z_1, z_2$. 
%%%%%%%%%%%%%%
In the special case when $G = \G$ it turns out that the $1$-parameter family  $\Phi_\omega \in \hol(\G,\d)$,  which we encountered in equation \eqref{eq1.26_I}, is ``universal" for the Carath\'eodory extremal problem \cite[Corollary 3.4]{AY04}, the following sense.
\begin{theorem}\label{thm1.10}
If $z_1, z_2 \in \G$ then there exists $\omega \in \t$ such that
\be\label{eq1.27}
C_G(z_1,z_2) = \rho(\Phi_\omega(z_1),\Phi_\omega(z_2)).
\ee
\end{theorem}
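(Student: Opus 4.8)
\proof
The plan is to prove that $C_{\G}(z_1,z_2)=\delta_0$, where $\delta_0\df\sup_{\omega\in\t}\rho(\Phi_\omega(z_1),\Phi_\omega(z_2))$, and that this supremum is attained; then \eqref{eq1.27} holds for the maximising $\omega$. One inequality is immediate: by the remarks following \eqref{eq1.26_I} each $\Phi_\omega$ lies in $\hol(\G,\d)$, so every number $\rho(\Phi_\omega(z_1),\Phi_\omega(z_2))$ is among those over which the supremum in \eqref{eq1.22} defining $C_{\G}$ is taken, whence $C_{\G}(z_1,z_2)\ge\delta_0$. For attainment, fix $z=(s,p)\in\G$, so that $|s|<2$ and hence $2-\omega s\neq0$ for every $\omega\in\t$; then $\omega\mapsto\Phi_\omega(z)=\frac{2\omega p-s}{2-\omega s}$ is continuous on the compact set $\t$, and as $\rho$ is continuous on $\d\times\d$ there is $\omega_0\in\t$ with $\rho(\Phi_{\omega_0}(z_1),\Phi_{\omega_0}(z_2))=\delta_0$.

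The substantive task is the reverse inequality $C_{\G}(z_1,z_2)\le\delta_0$, i.e.\ to show $\rho(f(z_1),f(z_2))\le\delta_0$ for every $f\in\hol(\G,\d)$. I would deduce this from the structure of complex geodesics of $\G$. Since $\G$ is taut (see e.g.\ \cite{costara}), the infimum in \eqref{eq1.24} is attained: there are $\lambda_1,\lambda_2\in\d$ and $h\in\hol(\d,\G)$ with $h(\lambda_j)=z_j$ $(j=1,2)$ and $\rho(\lambda_1,\lambda_2)=\rho_{\G}(z_1,z_2)$. The crucial input --- the content of the extension of Lempert's theorem to $\G$ and of the analysis of its Carath\'eodory extremals in \cite{AY04} (see also \cite{costara,ALY12}) --- is that such an extremal disc is left-invertible by a member of the family $\{\Phi_\omega\}$: there is $\omega\in\t$ for which $\Phi_\omega\circ h$ is an automorphism of $\d$. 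Granting this, for arbitrary $f\in\hol(\G,\d)$ the composite $f\circ h$ lies in $\hol(\d,\d)$, so the Schwarz--Pick lemma gives
\[
\rho(f(z_1),f(z_2))=\rho\big(f(h(\lambda_1)),f(h(\lambda_2))\big)\le\rho(\lambda_1,\lambda_2),
\]
while, automorphisms of $\d$ being $\rho$-preserving,
\[
\rho(\lambda_1,\lambda_2)=\rho\big(\Phi_\omega(h(\lambda_1)),\Phi_\omega(h(\lambda_2))\big)=\rho(\Phi_\omega(z_1),\Phi_\omega(z_2))\le\delta_0.
\]
Hence $\rho(f(z_1),f(z_2))\le\delta_0$, so $C_{\G}(z_1,z_2)\le\delta_0$. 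Combined with $C_{\G}(z_1,z_2)\ge\delta_0$ this yields $C_{\G}(z_1,z_2)=\delta_0=\rho(\Phi_{\omega_0}(z_1),\Phi_{\omega_0}(z_2))$, which is \eqref{eq1.27} with $\omega=\omega_0$.

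The step I expect to be the main obstacle is precisely the claim that an extremal disc of $\G$ can be left-inverted by some $\Phi_\omega$ --- equivalently, that the one-parameter family $\{\Phi_\omega:\omega\in\t\}$ already exhausts the Carath\'eodory extremals of $\G$. This cannot be extracted from the elementary properties of the $\Phi_\omega$ collected in Section~\ref{background}; it rests on the finer function theory of $\G$ --- its operator-theoretic realisation theory, the special role of the royal variety $\royal$, and the explicit form \eqref{eq1.26_I} of the $\Phi_\omega$. An equivalent packaging of the same difficulty is to prove the two-point Pick criterion for $\hol(\G,\d)$ of \cite{AY04}: a holomorphic $f\colon\G\to\d$ with $f(z_1)=w_1$ and $f(z_2)=w_2$ exists if and only if $\rho(w_1,w_2)\le\rho(\Phi_\omega(z_1),\Phi_\omega(z_2))$ for all $\omega\in\t$ --- the necessity here resting on a von Neumann type inequality on $\Ga$ for pairs of commuting operators whose joint spectrum lies in $\Ga$ --- from which $C_{\G}(z_1,z_2)\le\delta_0$ is immediate.
\qed
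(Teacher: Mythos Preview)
The paper does not prove this statement; Theorem~\ref{thm1.10} is simply quoted from \cite[Corollary~3.4]{AY04} as background in the concluding section, so there is no ``paper's own proof'' to compare against.

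Your outline is a correct reduction to that same body of results, and you are right to flag the left-invertibility of a Kobayashi extremal disc by some $\Phi_\omega$ as the crux: that is precisely where the non-elementary content sits, and it cannot be recovered from the material in Section~\ref{background}. One remark is that your route is slightly roundabout. You pass through Kobayashi extremals and invoke the characterisation of complex geodesics of $\G$ (every such geodesic admits a left inverse among the $\Phi_\omega$, as recalled from \cite{AY06} in the proof of Corollary~\ref{geodesics}), whereas the original argument in \cite{AY04} proves the two-point Pick criterion for $\hol(\d,\G)$ directly via an operator-theoretic (commutant lifting/von~Neumann inequality) argument --- the alternative packaging you yourself describe in your final paragraph --- from which $C_\G(z_1,z_2)\le\delta_0$ is immediate. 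There is no logical gap in your version, but you should be aware of a potential circularity: the geodesic description in \cite{AY06} is built on top of the Carath\'eodory result in \cite{AY04}, so if one is trying to give an independent proof of Theorem~\ref{thm1.10} rather than merely cite it, the direct route through the Pick criterion is the one that actually does the work.
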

Another fact about the complex geometry of $\G$ is that
$$\rho_\G=K_\G=C_\G.$$ 
This corresponds to the geometric property of $\G$ that if $h$ is an extremal function for the Kobayashi problem \eqref{eq1.24}, then the range $\ran(h)$ of $h$ is a totally geodesic analytic disc in $\G$ \cite[Corollary 5.7]{AY04}.

The Kobayashi extremal problem can be viewed as an extremal $2$-point interpolation problem. Specifically, by a {\em finite interpolation  problem in $\hol(\d,G)$}, one means the following.
\begin{problem}\label{prob1.10} 
Given $n$ distinct points $\lambda_1,\ldots,\lambda_n$ in $\d$ and $n$ points $z_1,\ldots,z_n $ in an open or closed set $G\subset \C^N$, to determine
whether there exists a function $h \in \hol(\d,G)$ such that $h(\lambda_j) = z_j$ for $j=1,\ldots,n$.
\end{problem}
We say that Problem \ref{prob1.10} {\em  is solvable}, or that the data $\la_j \mapsto z_j$ {\em are solvable}, if there does exist an $h \in \hol(\d,G)$ that satisfies these interpolation conditions. 
We say that the problem is (or the data are)  \emph{extremal} when the problem is solvable but there do not exist an open neighbourhood $U$ of the closure of $\d$ and a map $h\in\hol(U,G)$ such that the conditions 
\beq\label{interpGen1}
h(\la_j)=z_j \quad\mbox{ for } \quad  j=1,\dots,n,
\eeq
hold.

A map $h\in\hol(\d,G)$ is said to be {\em $n$-extremal} if, for any choice of $n$ distinct points $\la_1,\dots,\la_n\in\d$, the interpolation data $\la_j\in\d \mapsto h(\la_j)\in  G$ are extremally solvable.
% \cite[Definition 2.1]{ALY12}.

With this perspective, if $h$ and $\lambda_1,\lambda_2$  minimize the right hand side of  equation \eqref{eq1.24}, then the $2$-point interpolation problem $\lambda_j \to z_j,\ j=1,2$ for $\hol(\d,G)$ is extremal and $h$ is an extremal solution to it.  Just as the Kobayashi extremal functions on $\G$ are both rational and $\gaminn$, more generally, the following result obtains (see \cite{Cost05} or \cite[Theorem 3.1]{ALY13}). 
\begin{proposition}\label{prop1.10}
If $\lambda_j \to (s_j,p_j)$,  $j=1,\ldots,n$,  is a solvable $n$-point interpolation problem for $\hol(\d,\G)$ then it has a rational $\gaminn$ solution.
\end{proposition}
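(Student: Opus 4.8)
The plan is to transport the problem to a matrix-valued Schur-class interpolation problem, where rational solutions of bounded degree — and, via the standard parametrisation of all solutions, rational \emph{inner} solutions — are available from classical dilation theory, and then to descend back to $\Ga$ through the symmetrisation map $W\mapsto(\tr W,\det W)$.

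First I would use the description of $\G$ recalled in Section \ref{background} (from \cite[Corollary 3.4]{AY04}): a holomorphic $h=(s,p):\d\to\c^2$ maps $\d$ into $\G$ precisely when $p$ maps $\d$ into $\d$ and $\Phi_\omega\circ h$ lies in the Schur class $\schur$ for every $\omega\in\t$, and $h$ is reconstructed from the family $\{\Phi_\omega\circ h\}$ since $\Phi_0\circ h=-\tfrac12 s$ and a second value $\Phi_{\omega_1}\circ h$ then fixes $p$ through the linear relation $(2-\omega_1 s)(\Phi_{\omega_1}\circ h)=2\omega_1 p-s$. The map $\pi:\c^{2\times2}\to\Ga$, $\pi(W)=(\tr W,\det W)$, carries the closed unit ball onto $\Ga$ and the open unit ball onto $\G$, so one may choose matrices $W_j$ in the open unit ball with $\tr W_j=s_j$ and $\det W_j=p_j$ (for instance $W_j=\operatorname{diag}(z_j,w_j)$ where $z_j+w_j=s_j$, $z_jw_j=p_j$, the roots lying in $\d$ because $(s_j,p_j)\in\G$). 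Following Costara, I would then argue that solvability of the $\hol(\d,\G)$ problem $\lambda_j\mapsto(s_j,p_j)$ is equivalent to solvability of the matrix Schur-class problem $\lambda_j\mapsto W_j$ for a suitable choice of the $W_j$ within their fibres: one direction is immediate, since $\pi\circ F$ solves the $\G$-problem whenever $F$ solves the matrix problem, and the reverse, which requires lifting the given $\G$-valued solution to a holomorphic map into the matrix unit ball rather than merely lifting its nodal values, is the part that genuinely requires work.

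Granting this reduction, the classical Nevanlinna--Pick/commutant-lifting machinery finishes the argument. Solvability of $\lambda_j\mapsto W_j$ forces the Pick kernel $\big[(I-W_i^{*}W_j)/(1-\overline{\lambda_i}\lambda_j)\big]_{i,j=1}^n$ to be positive semidefinite; one then obtains a solution $F$ that is a rational matrix function of degree at most $n$, and — by exactly the scalar mechanism used in Theorem \ref{thm6.10} and Corollary \ref{norm-lin-param} — when the kernel is positive definite the free Schur parameter in the linear-fractional parametrisation of all solutions can be taken to be a constant unitary (or a rational inner Blaschke--Potapov factor), giving a rational \emph{inner} $F$, while when the kernel is singular the unique solution is already rational and inner. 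Setting $h:=(\tr F,\det F)$ yields a rational holomorphic map of $\d$ into $\Ga$ with $h(\lambda_j)=(s_j,p_j)$; and since $F$ inner means $F(\e^{\ii\theta})$ is unitary for almost every $\theta$, its eigenvalues are unimodular a.e., so $h(\e^{\ii\theta})\in b\Ga$ a.e.\ by Proposition \ref{prop2.10}, i.e.\ $h$ is a rational $\gaminn$ solution.

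The main obstacle is therefore the reduction in the second paragraph — producing, from the given $\G$-valued solution, a holomorphic lift into the matrix unit ball taking the prescribed values $W_j$ at the interpolation nodes; this is where the special geometry of $\G$ (Lempert's theorem, the equality $\rho_\G=C_\G$, and the universality of the magic functions $\Phi_\omega$, Theorem \ref{thm1.10}) does the real work, and it is essentially the content of the cited results \cite{Cost05} and \cite[Theorem 3.1]{ALY13}. An alternative way to secure innerness, if one prefers not to rely on the inner-parametrisation step, is to reduce first to \emph{extremal} data and then invoke the fact — again a consequence of $\rho_\G=C_\G$ and Theorem \ref{thm1.10} — that an extremal $\hol(\d,\G)$ solution $h=(s,p)$ satisfies $|\Phi_\omega\circ h|=1$ a.e.\ on $\t$ for some $\omega\in\t$, whence $p$ is inner and $h$ is $\gaminn$ by Proposition \ref{prop2.10}; the rational solution produced by the matrix machinery for the reduced problem then automatically solves the original problem and is $\gaminn$.
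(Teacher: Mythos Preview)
The paper does not supply its own proof of this proposition: it is stated with the parenthetical ``(see \cite{Cost05} or \cite[Theorem 3.1]{ALY13})'' and nothing more. So there is no argument in the paper to compare against; your proposal is therefore being measured against the cited literature rather than against anything the authors wrote here.

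Your outline is a faithful sketch of the Costara route in \cite{Cost05}: push the data up to $2\times 2$ matrices via $\pi(W)=(\tr W,\det W)$, solve a matrix Nevanlinna--Pick problem there by classical means to obtain a rational inner $F$, and project back. You correctly isolate the only genuinely hard step --- producing, from a given $\G$-valued solution, a holomorphic lift into the matrix ball (equivalently, showing that solvability of the $\G$-problem forces the matrix Pick condition for \emph{some} choice of $W_j$ in the fibres). You then explicitly defer this step to \cite{Cost05} and \cite{ALY13}, which is exactly what the paper itself does. So as an outline it is sound and essentially follows one of the cited references; but as a self-contained proof it is incomplete precisely at the point you flag, and neither of the two mechanisms you invoke to cover it (the $\Phi_\omega$ description of $\G$, or the equality $\rho_\G=C_\G$) by itself yields the lift without the substantial work carried out in those papers. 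A minor caution: the ``scalar mechanism of Theorem \ref{thm6.10} and Corollary \ref{norm-lin-param}'' does not literally transfer to the matrix case --- obtaining a rational \emph{inner} matrix interpolant requires the Potapov/Ball--Helton parametrisation rather than the scalar argument used in Section~\ref{mixed}, though the conclusion you want is of course available there.
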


The royal variety (or more precisely, $\royal \cap\G$) is a complex geodesic of $\G$, with extremal function given by $h_\royal(\lambda)=(2\lambda,\lambda^2)$. Furthermore, among the complex geodesics in $\G$, the royal variety is characterized by the property that 
$$
\alpha(\royal\cap\G)=\royal\cap\G
$$ 
whenever $\alpha$ is a biholomorphic self map (automorphism) of $\G$ \cite[Lemma 4.3]{AY08}. In addition, the automorphism group of $\G$ acts transitively on $\royal\cap \G$.

If $F$ is a Carath\'eodory extremal function for some pair of points in $\G$ then so is $m\circ F$ for any M\"obius transformation $m$ of the disc. The universal set described in Theorem \ref{thm1.10} above is normalized so as to satisfy
$\Phi_\omega \circ h_\royal =-\mathrm{ id}_\d$. As a result,
\be\label{eq1.40}
\Phi_\omega|\royal \text{ does not depend on } \omega.
\ee

A Kobayashi extremal function on any domain for which the Lempert function and the Carath\'eodory distance coincide has a holomorphic left inverse.  L. Kosinski and W. Zwonek \cite{KZ} introduced a generalization of this notion: a map $h: \d\to G$, for any domain $G$, is said to be an {\em $n$-complex geodesic} if there exists a holomorphic map $F:G\to\d$ such that $F\circ h$ is a Blaschke product of degree at most $n$.
  The following result shows that rational  $\Gamma$-inner functions enjoy this property.
\begin{proposition}\label{extrem-maps} Let $h$ be a rational  $\Gamma$-inner function of degree $n$ which is not superficial and let $h(\d)\not\subset\royal$.   Then 
\begin{enumerate}
\item $h$ is an $(n+1)$-extremal holomorphic map in  $\hol(\d,\Gamma)$ and is an $n$-complex geodesic of $\G$;
\item if in addition $h$ has at least one royal node $\sigma \in \t$ then $h$ is an $n$-extremal holomorphic map in  $\hol(\d,\Gamma)$ and is an $(n-1)$-complex geodesic of $\G$.
\end{enumerate}
\end{proposition}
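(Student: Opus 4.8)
The plan is to read off the complex-geodesic assertions directly from the Carath\'eodory family $\Phi_\omega$, and to deduce the extremality assertions from a Pick-type rigidity property of finite Blaschke products.

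For the complex-geodesic halves: since $h$ is not superficial, the standard dichotomy for holomorphic maps into $\Ga$ (every member of $\hol(\d,\Ga)$ carries $\d$ either into $\G$ or into $\partial\Ga$; see \cite{ALY12,AY04}) gives $h(\d)\subset\G$, so each $\Phi_\omega|_\G$ is a genuine holomorphic map $\G\to\d$. By the computations in the proof of Theorem~\ref{thm7.10}(1) the composite $\Phi_\omega\circ h$ is always a finite Blaschke product, of degree $n-c(\omega)$, where $c(\omega)$ counts the cancellations between $2\omega p-s$ and $2-\omega s$; by \cite[Theorem~7.12]{ALY12} these occur only when $\omega=-\bar\eta_j$ for a royal node $\si_j\in\t$, so $c(\omega)=0$ for all but finitely many $\omega$, while $c(-\bar\eta)\ge1$ whenever $\sigma\in\t$ is a royal node with royal value $\eta$. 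Thus $F=\Phi_\omega|_\G$ with generic $\omega$ realises $h$ as an $n$-complex geodesic of $\G$, and $F=\Phi_{-\bar\eta}|_\G$ realises it as an $(n-1)$-complex geodesic, which settles the geodesic parts of (1) and (2). I also record that $\deg(\Phi_\omega\circ h)\ge1$ for every $\omega\in\t$: were this degree $0$, then $\Phi_\omega\circ h$ would be a unimodular constant, and solving $\Phi_\omega(s,p)=\mathrm{const}$ for $s$ forces $s=A+\bar Ap$ with $|A|=1$, that is, $h(\d)$ would lie in a flat disc of $\partial\Ga$ and $h$ would be superficial, contrary to hypothesis.

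For the extremality halves it suffices to prove the following principle: \emph{if $F\in\hol(\G,\d)$ and $\psi:=F\circ h$ is a finite Blaschke product of degree $d\ge1$, then $h$ is $(d+1)$-extremal}. Granting this, $F=\Phi_\omega|_\G$ with generic $\omega$ ($d=n$) gives part~(1), and $F=\Phi_{-\bar\eta}|_\G$ (with $1\le d=\deg(\Phi_{-\bar\eta}\circ h)\le n-1$) gives that $h$ is $(d+1)$-extremal and hence $n$-extremal, since extremality of every collection of $m$-point data forces extremality of every collection of $(m+1)$-point data. To prove the principle I would fix distinct $\la_1,\dots,\la_{d+1}\in\d$ and suppose, for contradiction, that the data $\la_j\mapsto h(\la_j)$ are extendably solvable, say by $g\in\hol(U,\Ga)$ with $U\supset\d^-$ open and $g(\la_j)=h(\la_j)$ for all $j$. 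Since $g(\la_1)=h(\la_1)\in\G$, the dichotomy gives $g(\d)\subset\G$. Because $(2\bar\omega,\bar\omega^2)\in b\Ga$ is the only point of $\Ga$ lying on the singular line $2-\omega s=0$ of $\Phi_\omega$, and $g$ does not attain it on $\d$, the composite $\Phi_\omega\circ g:\d\to\d^-$ is holomorphic and agrees with $\psi$ at $\la_1,\dots,\la_{d+1}$. As $\psi$ has degree $d$, Proposition~\ref{rankM} shows that the Pick matrix of the data $\la_j\mapsto\psi(\la_j)$ is positive semi-definite (it is $\ge0$ because $\psi$ interpolates) of rank $\le d<d+1$, so Theorem~\ref{refSarason}(2) forces $\psi$ to be the unique Schur-class function with those $d+1$ values; hence $\Phi_\omega\circ g=\psi$ on $\d$. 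Finally, $\psi$ being a non-constant finite Blaschke product extends holomorphically past $\t$ with $|\psi|>1$ on $\{|\la|>1\}$, whereas choosing $\zeta_0\in\t$ with $g(\zeta_0)\neq(2\bar\omega,\bar\omega^2)$ (such a $\zeta_0$ exists, else $g\equiv(2\bar\omega,\bar\omega^2)$, contradicting $g(\la_1)\in\G$) makes $\Phi_\omega\circ g$ holomorphic and $\d^-$-valued on a neighbourhood of $\zeta_0$, on which, by the identity theorem, it equals $\psi$; this is impossible because that neighbourhood meets $\{|\la|>1\}$. The contradiction shows $h$ is $(d+1)$-extremal.

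The step I expect to demand the most care is controlling $\Phi_\omega$ under composition with the hypothetical extension $g$: one must rule out $g$ meeting $b\Ga$ at an interior point of $\d$ — which is exactly where the ``$\G$-or-$\partial\Ga$'' dichotomy enters — and verify that $(2\bar\omega,\bar\omega^2)$ is the sole singularity of $\Phi_\omega$ inside $\Ga$, so that $\Phi_\omega\circ g$ is defined on all of $\d$. The other ingredients — the degree formula for $\Phi_\omega\circ h$ from Theorem~\ref{thm7.10}, and the Pick-rigidity of finite Blaschke products via Proposition~\ref{rankM} and Theorem~\ref{refSarason} — are routine.
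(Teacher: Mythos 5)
Your proof is correct, and the complex-geodesic halves follow the paper's route exactly (compose with $\Phi_\omega$, count cancellations at royal nodes on $\t$ via \cite[Theorem 7.12]{ALY12}). Where you genuinely diverge is in the extremality halves. The paper disposes of these in two lines: it invokes ``a version of Pick's result'' (the $(n+1)$-extremal self-maps of $\d$ are precisely the Blaschke products of degree at most $n$) and then transfers extremality from $\hol(\d,\d^-)$ back to $\hol(\d,\Ga)$ by citing \cite[Proposition 2.2]{ALY12}. You instead prove the needed rigidity from scratch, using only machinery already in the paper: the rank bound of Proposition \ref{rankM}, the uniqueness clause of Theorem \ref{refSarason}(2) to force any competitor $\Phi_\omega\circ g$ to coincide with $\psi=\Phi_\omega\circ h$ on $\d$, and then the reflection property $|\psi|>1$ outside $\d^-$ to contradict the existence of an extension $g$ holomorphic past $\t$. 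This buys self-containedness (no appeal to the external transfer lemma) at the cost of some bookkeeping: you must justify the $\G$-versus-$\partial\Ga$ dichotomy for the hypothetical extension $g$, locate a boundary point where $\Phi_\omega\circ g$ is regular, and check $\deg(\Phi_\omega\circ h)\ge 1$ via non-superficiality — all of which you do correctly. Two cosmetic remarks: your ``principle'' is stated for arbitrary $F\in\hol(\G,\d)$ but your argument uses that $F=\Phi_\omega$ extends to $\Ga$ minus one point (needed for the boundary contradiction), so it is really proved only for the $\Phi_\omega$'s — harmless, since those are the only instances used; and your observation that $m$-extremality implies $(m+1)$-extremality, which you use to pass from $(d+1)$-extremal to $n$-extremal in part (2), is the right way to handle the possibility that $\deg(\Phi_{-\bar\eta}\circ h)<n-1$ when several royal nodes on $\t$ share the value $\eta$.
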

\begin{proof} As in Theorem \ref{thm7.10}, $\ph = \Phi_\omega\circ h$ for some $\omega \in \t$ is 
 a rational inner function $\ph \in \hol(\d,\d^-)$  such that
 $\deg(\ph) \le n$. Thus 
$h$ is $n$-complex geodesic. By a version of Pick's result, the $(n+1)$-extremal holomorphic self-maps of $\d$ are precisely the Blaschke products of degree at most $n$. Thus  $\ph$ is a $(n+1)$-extremal in  $\hol(\d,\d^-)$. Therefore, by \cite[Proposition 2.2]{ALY12}, $h$ is 
$(n+1)$-extremal in  $\hol(\d,\Gamma)$.

(ii) If $h(\sigma) = (-2\eta, \eta^2)$ and
 $\omega=-\bar\eta$ then the rational function $\Phi_\omega\circ h$ has a removable singularity at $\si$.  After cancellation  $\Phi_\omega\circ h$ has degree $(n-1)$. As above $h$ is an $n$-extremal holomorphic map in  $\hol(\d,\Gamma)$ and is an $(n-1)$-complex geodesic of $\G$.
\qed
\end{proof}

\begin{corollary}\label{geodesics} All non-superficial functions $h$ in $\mathcal{R}^{1,0} \cup \mathcal{R}^{2,1}$ are complex geodesics of $\G$.
\end{corollary}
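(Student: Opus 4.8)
The plan is to derive the corollary from Proposition~\ref{extrem-maps}, the only extra work being to promote its conclusion from ``$1$-complex geodesic of $\G$'' to ``complex geodesic of $\G$''. Recall that a holomorphic map $h:\d\to\G$ is a complex geodesic of $\G$ exactly when it has a holomorphic left inverse in $\hol(\G,\d)$, and that to produce one it suffices to exhibit $F\in\hol(\G,\d)$ with $F\circ h\in\Aut\d$, for then $(F\circ h)^{-1}\circ F$ is a left inverse. So the scheme is: for each non-superficial $h=(s,p)$ in $\mathcal{R}^{1,0}\cup\mathcal{R}^{2,1}$, pick a suitable $\omega\in\t$, show that $\Phi_\omega\circ h$ (after any removable cancellation) is a rational inner function of degree \emph{exactly} $1$, hence an automorphism $\psi$ of $\d$, and observe that $F:=\psi^{-1}\circ\Phi_\omega$ does the job.

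First I would record that the hypotheses of Proposition~\ref{extrem-maps} are met. Since $h\in\rnk$ has a well-defined (hence not identically zero) royal polynomial, $h(\d)\not\subset\royal$; and by \cite[Theorem 3.8]{ALY14}, $\deg(h)=n$ with $p$ a finite Blaschke product of degree $n$. Hence a non-superficial $h\in\mathcal{R}^{1,0}$ has degree $1$ and no royal node on $\t$, while a non-superficial $h\in\mathcal{R}^{2,1}$ has degree $2$ and at least one royal node $\sigma\in\t$, since the sum of the multiplicities of the royal nodes of $h$ on $\t$ is $k=1$. For $h\in\mathcal{R}^{1,0}$, take any $\omega\in\t$ and put $\psi=\Phi_\omega\circ h$; as in the proof of Theorem~\ref{thm7.10} this is a rational inner function, and by \eqref{degrees} together with \cite[Theorem 7.12]{ALY12} its degree equals $\deg(p)$ minus the number of cancellations in $\tfrac{2\omega p-s}{2-\omega s}$, which occur only at royal nodes of $h$ on $\t$; as there are none, $\deg\psi=\deg p=1$, so $\psi\in\Aut\d$. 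For $h\in\mathcal{R}^{2,1}$, let $\eta$ be the royal value at the boundary royal node $\sigma$, so $h(\sigma)=(-2\eta,\eta^2)$ by Proposition~\ref{phaser_h}(i), and set $\omega=-\bar\eta$; exactly as in the proof of Proposition~\ref{extrem-maps}(ii), $\Phi_\omega\circ h$ has a removable singularity at $\sigma$ and, after cancellation, is a rational inner function of degree $\deg(h)-1=1$, i.e.\ an automorphism $\psi$ of $\d$, which coincides with $\Phi_\omega\circ h$ on $\d$ since $\sigma\in\t$. In either case the only singularity of $\Phi_\omega$, at $(2\bar\omega,\bar\omega^2)$, lies in $b\Gamma$, so $\Phi_\omega\in\hol(\G,\d)$; then $F:=\psi^{-1}\circ\Phi_\omega\in\hol(\G,\d)$ satisfies $F\circ h=\psi^{-1}\circ\psi=\id_\d$ on $\d$, and therefore $h$ is a complex geodesic of $\G$.

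The one point to be careful about — and the closest thing to an obstacle — is establishing that $\psi$ has degree \emph{exactly} $1$ rather than merely at most $1$, so that it really is an automorphism and not a unimodular constant (equivalently, that $h$ is not flattened onto a single fibre of $\Phi_\omega$). This is precisely what the degree bookkeeping via \eqref{degrees} and \cite[Theorem 7.12]{ALY12} delivers in the $\mathcal{R}^{1,0}$ case, and what the clause ``after cancellation $\Phi_\omega\circ h$ has degree $(n-1)$'' of Proposition~\ref{extrem-maps}(ii) delivers in the $\mathcal{R}^{2,1}$ case; everything else is immediate from results already established in the paper.
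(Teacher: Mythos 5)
Your proposal is correct and follows essentially the same route as the paper: both rest on Proposition \ref{extrem-maps} and the degree bookkeeping showing that $\Phi_\omega\circ h$ (with any $\omega$ in the $\mathcal{R}^{1,0}$ case, and with $\omega=-\bar\eta$ at the boundary royal node in the $\mathcal{R}^{2,1}$ case) is a degree-one Blaschke product, hence an automorphism of $\d$. The only difference is presentational: the paper invokes the characterization from \cite{AY06} that $h$ is a complex geodesic of $\G$ if and only if $\Phi_\omega\circ h\in\Aut\d$ for some $\omega\in\t$, whereas you prove the relevant implication directly by exhibiting the left inverse $\psi^{-1}\circ\Phi_\omega$.
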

\begin{proof}  First we recall a result from \cite{AY06} that  an analytic  function $h:\D \to \G  $ is  a  complex geodesic of $\G$ if and only if there is an $\omega \in \T$ such that $\Phi_{\omega} \circ h \in\Aut \mathbb{D}$ and that  every complex geodesic of $\G$ is $\Gamma$-inner.
By Proposition \ref{extrem-maps}, each non-superficial function from the set $\mathcal{R}^{1,0} \cup \mathcal{R}^{2,1}$ is a complex geodesic. \qed
\end{proof}

%%%%%%%%%%%%%%%%%%%

\vspace*{1cm}

JIM  ~ AGLER, Department of Mathematics, University of California at San Diego, CA \textup{92103}, USA\\

ZINAIDA A. LYKOVA,
School of Mathematics and Statistics, Newcastle University, Newcastle upon Tyne
 NE\textup{1} \textup{7}RU, U.K.~~\\

N. J. YOUNG, School of Mathematics and Statistics, Newcastle University, Newcastle upon Tyne NE1 7RU, U.K.
{\em and} School of Mathematics, Leeds University,  Leeds LS2 9JT, U.K.
\end{document}